\title[The space $D$ in several variables]
{The space $D$ in several variables: random variables and higher moments}
\date{1 April, 2020}
\author{Svante Janson}
\thanks{Partly supported by the Knut and Alice Wallenberg Foundation}
\address{Department of Mathematics, Uppsala University, PO Box 480,
SE-751~06 Uppsala, Sweden}
\email{svante.janson@math.uu.se}
\urladdr{http://www.math.uu.se/svante-janson}
\subjclass[2010]{60B11; 46B28, 46E15, 46G10, 46J10} 
\numberwithin{equation}{section}
\renewcommand\le{\leqslant}
\renewcommand\ge{\geqslant}
\theoremstyle{plain}
\newtheorem{theorem}{Theorem}[section]
\newtheorem{lemma}[theorem]{Lemma}
\newtheorem{proposition}[theorem]{Proposition}
\newtheorem{corollary}[theorem]{Corollary}
\theoremstyle{definition}
\newtheorem{example}[theorem]{Example}
\newtheorem{definition}[theorem]{Definition}
\newtheorem{problem}[theorem]{Problem}
\newtheorem{remark}[theorem]{Remark}
\newtheorem*{ack}{Acknowledgement}
\theoremstyle{remark}
\newenvironment{romenumerate}[1][-10pt]{
\addtolength{\leftmargini}{#1}\begin{enumerate}
 }{\end{enumerate}}
\newenvironment{alphenumerate}[1][-10pt]{
\addtolength{\leftmargini}{#1}\begin{enumerate}
 }{\end{enumerate}}
\newcounter{oldenumi}
{\setcounter{oldenumi}{\value{enumi}}
\begin{romenumerate} \setcounter{enumi}{\value{oldenumi}}}
{\end{romenumerate}}
\newcounter{thmenumerate}
\newenvironment{thmenumerate}
{\setcounter{thmenumerate}{0}%
 \def\item{\par
 \refstepcounter{thmenumerate}\textup{(\roman{thmenumerate})\enspace}}
}
{}
\newcounter{xenumerate}   
\newcommand\pfitemx[1]{\par#1:}
\newcommand\pfitemref[1]{\pfitemx{\ref{#1}}}
\newcommand{\refT}[1]{Theorem~\ref{#1}}
\newcommand{\refC}[1]{Corollary~\ref{#1}}
\newcommand{\refL}[1]{Lemma~\ref{#1}}
\newcommand{\refR}[1]{Remark~\ref{#1}}
\newcommand{\refS}[1]{Section~\ref{#1}}
\newcommand{\refSS}[1]{Section~\ref{#1}}
\newcommand{\refP}[1]{Proposition~\ref{#1}}
\newcommand{\refD}[1]{Definition~\ref{#1}}
\newcommand{\refE}[1]{Example~\ref{#1}}
\newcommand{\refApp}[1]{Appendix~\ref{#1}}
\xdef\klockan{\the\count1.0\the\count255}
\xdef\klockan{\the\count1.\the\count255}\fi
\newcommand\nopf{\qed}   
\newcommand{\sumim}{\sum_{i=1}^m}
\newcommand{\prodil}{\prod_{i=1}^\ell}
\newcommand{\tensorim}{\bigotimes_{i=1}^m}
\newcommand\set[1]{\ensuremath{\{#1\}}}
\newcommand\bigset[1]{\ensuremath{\bigl\{#1\bigr\}}}
\newcommand\xpar[1]{(#1)}
\newcommand\bigpar[1]{\bigl(#1\bigr)}
\newcommand\Bigpar[1]{\Bigl(#1\Bigr)}
\newcommand\biggpar[1]{\biggl(#1\biggr)}
\newcommand\lrpar[1]{\left(#1\right)}
\newcommand\xcpar[1]{\{#1\}}
\newcommand\bigabs[1]{\bigl|#1\bigr|}
\newcommand\biggabs[1]{\biggl|#1\biggr|}
\newcommand\lrabs[1]{\left|#1\right|}
\def\rompar(#1){\textup(#1\textup)}    
\newcommand\innprod[1]{\langle#1\rangle}
\def\xexp(#1){e^{#1}}
\newcommand\ceil[1]{\lceil#1\rceil}
\newcommand\floor[1]{\lfloor#1\rfloor}
\newcommand\setm{\set{1,\dots,m}}
\newcommand\mm{[m]}
\newcommand\ntoo{\ensuremath{{n\to\infty}}}
\newcommand\Ntoo{\ensuremath{{N\to\infty}}}
\newcommand\norm[1]{\|#1\|}
\newcommand\downto{\searrow}
\newcommand\upto{\nearrow}
\newcommand\punkt{.\spacefactor=1000}    
\newcommand\ie{i.e\punkt}
\newcommand\eg{e.g\punkt}
\newcommand\cf{cf\punkt}
\newcommand{\as}{a.s\punkt}
\newcommand{\aex}{a.e\punkt}
\newcommand\bbR{\mathbb R}
\newcommand\bbQ{\mathbb Q}
\newcounter{CC}
\newcounter{cc}
\newcommand\E{\operatorname{\mathbb E{}}}
\renewcommand\P{\operatorname{\mathbb P{}}}
\newcommand\ga{\alpha}
\newcommand\gd{\delta}
\newcommand\gD{\Delta}
\newcommand\gam{\gamma}
\newcommand\gL{\Lambda}
\newcommand\go{\omega}
\newcommand\gO{\Omega}
\newcommand\gs{\sigma}
\newcommand\gY{\Upsilon}
\newcommand\eps{\varepsilon}
\newcommand\cB{\mathcal B}
\newcommand\cD{\mathcal D}
\newcommand\cF{\mathcal F}
\newcommand\cG{\mathcal G}
\newcommand\cL{{\mathcal L}}
\newcommand\cS{{\mathcal S}}
\newcommand\tA{\tilde A}
\newcommand\ett[1]{\boldsymbol1\xcpar{#1}}
\newcommand\etta{\boldsymbol1}
\newcommand\qw{^{-1}}
\newcommand\oi{\ensuremath{[0,1]}}
\newcommand\ooi{(0,1]}
\newcommand\setoi{\set{0,1}}
\newcommand\dd{\,\mathrm{d}}
\newcommand{\ui}{uniformly integrable}
\newcommand\rv{random variable}
\newcommand\lhs{left-hand side}
\newcommand\rhs{right-hand side}
\newcommand\doi{\ensuremath{D(\oi)}}
\newcommand\doix[1]{\ensuremath{D(\oi^{#1})}}
\newcommand\doim{\doix{m}}
\newcommand\doiml{\doix{m-\ell}}
\newcommand\coi{\ensuremath{C(\oi)}}
\newcommand\coix[1]{\ensuremath{C(\oi^{#1})}}
\newcommand\coim{\coix{m}}
\newcommand\tensor{\otimes}
\newcommand\bigtensor{\bigotimes}
\newcommand\ptensor{\widehat\tensor}
\newcommand\itensor{\widecheck\tensor}
\newcommand\tpx[1]{^{\tensor #1}}
\newcommand\tpl{\tpx{\ell}}
\newcommand\itpx[1]{^{\itensor #1}}
\newcommand\itpk{\itpx{k}}
\newcommand\itpl{\itpx{\ell}}
\newcommand\itpm{\itpx{m}}
\newcommand\ptpx[1]{^{\ptensor #1}}
\newcommand\ptpk{\ptpx{k}}
\newcommand\ptpl{\ptpx{\ell}}
\newcommand\xx{x^*}
\newcommand\q{^*}
\newcommand\tp{tensor product}
\newcommand\mxx[2]{#1_1 #2 \dots #2 #1_k}
\newcommand\mmx[2]{#1_1 #2 \dotsm #2 #1_m}
\newcommand\mmxx[2]{#1_1 #2 \dots #2 #1_m}
\newcommand\hI{\widehat I}
\newcommand\hU{\widehat U}
\newcommand\U{\mathrm U}
\newcommand\ps{probability space}
\newcommand\assep{\as{} separably valued}
\newcommand\wassep{weakly \as{} separably valued}
\newcommand\ofp{(\gO,\cF,\P)}
\newcommand\meas{measurable}
\newcommand\Bameas{Baire measurable}
\newcommand\Bormeas{Borel measurable}
\newcommand\wmeas{weakly measurable}
\newcommand\dmeas{$\cD$-measurable}
\newcommand\comp{^{\mathsf c}}
\newcommand\chI{C(\hI)}
\newcommand\chIm{C(\hI^m)}
\newcommand\chIml{C(\hI^{m-\ell})}
\newcommand\chIlm{C(\hI^{\ell m})}
\newcommand\gsf{$\gs$-field}
\newcommand\Ba{\mathsf{Ba}}
\newcommand\Id{\mathrm{I}}
\newcommand\MBa{M_{\Ba}}
\newcommand\mujt{\mu_{J;t_{j_1},\dots,t_{j_\ell}}}
\newcommand\bmu{\bar\mu}
\newcommand\tensorr{\tensor\dotsm\tensor}
\newcommand\itensorr{\,\itensor\dotsm\itensor\,}
\newcommand\itensorq{\,\itensor\,}
\renewcommand\emptyset{\varnothing}
\newcommand\xt{{\hat t}}
\newcommand\xu{{\hat u}}
\newcommand\vxt{\mathbf{\xt}}
\newcommand\vt{\mathbf{t}}
\newcommand\hA{\widehat A}
\newcommand\trho{\rho^*}
\newcommand\trhox{\rho^{*\sharp}}
\newcommand\tmu{\tilde{\mu}}
\newcommand\wset[1]{\widehat{\set{#1}}}
\newcommand\tgY{\widetilde\Upsilon}
\newcommand\oimi{\oi^{m-1}}
\newcommand\glf{\gL_f}
\newcommand\glif{\gL^i_f}
\newcommand\glxf{\gL^*_f}
\newcommand\glixf{\gL^{*i}_f}
\newcommand\gdxf{\gD^*_f}
\newcommand\gdxX{\gD^*_X}
\newcommand\xife{\Xi_{f,\eps}}
\newcommand\xiXe{\Xi_{X,\eps}}
\newcommand\xxi{\xi}
\newcommand\xxxi{\bar\xi}
\newcommand\ttx[1]{t'{}^{#1}}
\newcommand\kkg{k'_G}
\newcommand\gYa{\ga}
\newcommand\diff{\bigtriangleup}
\newcommand\NA{A}
\newcommand\tNA{\tA}
\newcommand\inthm{\int_{\hI^m}}
\newcommand\him{\hI^{m}}
\newcommand\hilm{\hI^{\ell m}}
\newcommand{\Holder}{H\"older}
\begin{document}

\begin{abstract} 
We study the Banach space $D([0,1]^m)$ of
functions of several variables that are (in a certain sense)
right-continuous with left limits, and extend several results previously
known for the standard case $m=1$.
We give, for example, 
a description of the dual space, and we show that a bounded
multilinear form always is measurable
with respect to the $\sigma$-field generated by the
point evaluations. 
These results are used to study random functions in the space.
(I.e., random elements of the space.)
In particular, we give results on existence of moments (in different
senses) of such random functions, and we 
give an application to the Zolotarev distance between two such random functions.
\end{abstract}

\maketitle

\section{Introduction}\label{S:intro}

Recall that $\doi$ is the set of real-valued functions on $I:=\oi$  that are
right-continuous with left limits, see \eg{} \cite[Chapter 3]{Billingsley}.
Similarly, the $m$-dimensional analogue $\doim$ is defined as the set of
real-valued functions $f$ on $\oi^m$ such that at every
$t=(t_1,\dots,t_m)\in\oi^m$, the limit of $f(s)$ exists 
 (as a finite real number),
as $s\to t$ in any
of the octants of the form $J_1\times\dots\times J_m$ where each $J_i$ is
either $[t_i,1]$ or $[0,t_i)$ (the latter only if $t_i>0$).
For example,
take $m=2$ for notational convenience; then
$f\in\doix2$ if and only if, for each $(x,y)\in\oi^2$, the limits
\begin{align*}
f(x+,y+)&:=\lim_{\substack{x'\to x,\; x'\ge x\\ y'\to y,\;y'\ge y}}f(x',y'), 
\\
f(x+,y-)&:=\lim_{\substack{x'\to x,\; x'\ge x\\ y'\to y,\;y'< y}}f(x',y'), 
\\
f(x-,y+)&:=\lim_{\substack{x'\to x,\; x'< x\\ y'\to y,\;y'\ge y}}f(x',y'), 
\\
f(x-,y-)&:=\lim_{\substack{x'\to x,\; x'< x\\ y'\to y,\;y'< y}}f(x',y')
\end{align*}
exist, except that we ignore all cases with an
argument $0-$. Note the slight asymmetry; we use $\ge$ but $<$. Note also
that necessarily $f(x+,y+)=f(x,y)$ when the limit exists.

The space $\doim$ was studied by 
\citet{Wichura-PhD,Wichura1969}
and 
\citet{Neuhaus};
the latter
extended the definition of the Skorohod topology from the case $m=1$
and proved many basic results on it. 
(The definition of the space in
\cite{Neuhaus} differs slightly from the one above
at the top and right parts of the boundary;
this is not essential and his proofs are just as valid for the version
considered here.) 
See also 
\citet{Straf} for an even more general setting.

In the present paper we study $\doim$ from a different point of view, as a
normed (Banach) space.
The space $\doi$ was studied as a normed space
in \cite[Chapter 9]{SJ271} 
(together with $\doim$ to some minor extent) in order to show some results on
second and higher moments of $\doi$-valued random variables;
these results were at least partly motivated by an application 
\cite{NeiningerSulzbach}
where convergence in distribution of some $\doi$-valued random variables
was shown by the contraction method, which required some of these results as
technical tools.
The purpose of the present paper is to extend some of these results 
for $\doi$ to $\doim$; one motivation is that 
this enables similar applications of the
contraction method to $\doim$-valued random variables,
see
\cite{BroutinSulzbach}.

Functions in $\doim$ are bounded, and we define 
\begin{align}\label{norm}
\norm f:=\sup_{t\in\oi^m}|f(t)|.  
\end{align}
$\doim$ is a Banach space with this norm. Note that the Banach space $\doim$
is not separable (already for $m=1$), and that the space $\coim$ of
continuous functions on $\oi^m$ is a closed, separable subspace.

\begin{remark}
We consider, for definiteness,  real-valued functions.
The definitions and results extend with no or trivial modifications to
complex-valued functions and measures. 
It is also easy to extend the results to vector-valued functions with values
in a fixed, finite-dimensional vector space.
\end{remark}

\begin{ack}
I am indebted to Henning Sulzbach for initiating this work by asking
me  questions that led to it, and also for helpful comments;
this almost led to a joint paper.
\end{ack}

\section{Preliminaries}\label{Sprel}

\subsection{The split interval}\label{SSsplit}

Define the \emph{split interval}
$\hI$ as the set consisting of two copies, $t$ and $t-$, 
of every point in $\ooi$,  together with a single $0$.
There is a natural total order on $\hI$, with $s<t-<t$ when $s,t\in\oi$
with $s<t$.
We define intervals in $\hI$ in the usual way, using
this order, and equip $\hI$ with the
order topology, which has a base consisting of all open intervals 
$[0,x)$, $(x,1]$, and $(x,y)$ with $x,y\in\hI$; then $\hI$ is a 
compact Hausdorff space;
see \eg{} \cite[Problems 1.7.4 and 3.12.3]{Engelking}.
The compact space $\hI$ is separable (\ie, has a countable dense subset, for
example the rational numbers) and 
first countable (every point has a countable neighbourhood basis),
but not second countable and not
metrizable, see
\eg{} \cite[Section 9.2]{SJ271}.

We regard $\oi$ as a subset of $\hI$, with the inclusion mapping
$\iota:\oi\to\hI$ given by $\iota(t)=t$. 
This mapping is \emph{not} continuous; 
the subspace topology on $I$ induced by $\hI$ is stronger than the usual
topology on $I$ (which we continue to use for $I$).
(The induced topology on $(0,1)$ yields a version of the \emph{Sorgenfrey line},
see \cite[Examples 1.2.2,  2.3.12, 3.8.14, 5.1.31]{Engelking}.)

Every function $f\in\doi$ has a (unique) extension to a continuous function
on $\hI$, given by $f(t-)=\lim_{s\upto t}f(s)$. Conversely, the restriction
to $I$ of any continuous function on $\hI$ is a function in $\doi$.
There is thus a bijection $\doi\cong C(\hI)$, which is an isometric
isomorphism as Banach spaces \cite{EdgarII}.
Another way to see this is to note that $\doi$ is a Banach algebra with
$\hI$ as its maximal ideal space, and that the Gelfand transform is this
isomorphism $\doi\cong C(\hI)$, see \cite[Section 9.2]{SJ271}.

These results extend immediately to $\doim$.
The definition of $\doim$ shows that
every function $f\in\doim$ has a (unique) extension to a continuous function
on $\hI^m$, and, conversely, that the restriction
to $I^m$ of any continuous function on $\hI^m$ is a function in $\doim$;
hence, there is a bijection $\doim\cong C(\hI^m)$, which is an isometric
isomorphism as Banach spaces.
Again, this can be regarded as
the Gelfand transform for the Banach algebra  $\doim$, with maximal ideal
space $\hI^m$.

\subsection{Tensor products}\label{SStensor}

For definitions and basic properties
of the injective and projective tensor products $X\itensor
Y$ and $X\ptensor Y$ of two Banach spaces $X$ and $Y$ see \eg{} 
\cite{Ryan}, or the summary in \cite{SJ271}.

In particular, recall that if $K$ is a compact Hausdorff space, then
$C(K)\itpk \cong C(K^k)$ (isometrically) by the natural identification of
$f_1\tensor\dotsm\tensor f_k$ with the function 
\begin{equation}
  \label{tensorf}
f_1\tensorr f_k(\mxx{x}{,}):=\prod_{i=1}^k f_i(x_i)
\end{equation}
on $K^k$.
In particular, linear combinations of such functions 
$\bigotimes_1^k f_i$ 
are dense in $C(K^k)$.
Furthermore, $C(K)$ has the approximation property (see \cite[Chapter
4]{SJ271}), and as a consequence, $C(K)\ptpk$ can be regarded as a
linear subspace of $C(K)\itpk=C(K^k)$ (with a different norm).

Since $\doim\cong C(\hI^m)$, these results apply also to $\doim$. In
particular,
$\doim\itpk\cong C(\hI^{mk})\cong\doix{mk}$, again
by the natural identification \eqref{tensorf} of
$\bigotimes_1^k f_i$ and $\prod_{i=1}^k f_i(x_i)$.
(From now on, we identify these spaces and write $=$ instead of $\cong$.)
In particular, 
linear combinations of functions 
$\bigotimes_1^m f_i=\prod_{i=1}^m f_i(x_i)$ with $f_i\in\doi$
are dense in $\doim$.
Furthermore,
$\doim$ has the approximation property and thus 
$\doim\ptpk\subset \doim\itpk=\doix{mk}$.
Note that $\doim\ptpk$ is \emph{not} a closed subspace of $\doim\itpk=\doix{mk}$
when $k\ge2$, and thus the projective and injective norms are not equivalent
on $\doim\ptpk$; see 
\eg{} \cite[Remark 7.9 and Theorem 9.27]{SJ271}.

\subsection{Baire sets and measures}\label{SSBaire}

If $K$ is a topological space,
then the \emph{Borel \gsf} $\cB(K)$ is the \gsf{} generated by the open sets in
$K$, and 
if $K$ is a compact Hausdorff space (the only case that we consider),
the \emph{Baire \gsf} $\Ba(K)$ is the \gsf{} generated by the
continuous real-valued functions on $K$; 
see \eg{} 
\cite[\S6.3]{Bogachev},
\cite[\S51]{Halmos} (with a somewhat different definition,
equivalent in the compact case)
and \cite[Exercises 7.2.8--13]{Cohn}.
Elements of $\cB(K)$ are called \emph{Borel sets}
and elements of $\Ba(K)$ are called \emph{Baire sets}.
A \emph{Baire (Borel) measure on $K$} is a measure on $\Ba(K)$
($\cB(K)$),
and similarly for signed measures;
we consider in this paper only finite measures.

We collect some basic properties.
\begin{lemma}\label{Lbaire}
Let $K,K_1,K_2$ be  compact Hausdorff spaces.
  \begin{romenumerate}[0pt]
  \item \label{baire-borel}
$\Ba(K)\subseteq\cB(K)$.
  \item \label{baire=}
If $K$ is a compact metric space, then $\Ba(K)=\cB(K)$.
\item \label{baire-times}
$\Ba(K_1\times K_2)=\Ba(K_1)\times \Ba(K_2)$.
\item \label{baire-map}
If\/ $(S,\cS)$ is any measurable space, then a function
$f:(S,\cS)\to(K,\Ba(K))$ is measurable if and only if $g\circ
f:(S,\cS)\to\allowbreak(\bbR,\cB)$ is measurable for every $g\in C(K)$.
\item \label{baire-cont}
If $f:K_1\to K_2$ is continuous, then
$f$ is Baire measurable, \ie,
$f:(K_1,\Ba(K_2))\to(K_2,\Ba(K_2))$ is measurable.
  \end{romenumerate}
\end{lemma}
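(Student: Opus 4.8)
The plan is to prove the five parts in an order that respects their logical dependencies, the one delicate point being that the two directions of \ref{baire-map} cannot both be proved first: the ``only if'' direction presupposes that continuous maps are Baire measurable, which is exactly \ref{baire-cont}, so I would split \ref{baire-map} and interleave it with \ref{baire-cont}. Concretely, the order I would use is \ref{baire-borel}, \ref{baire=}, the ``if'' direction of \ref{baire-map}, then \ref{baire-cont}, then the ``only if'' direction of \ref{baire-map}, and finally \ref{baire-times}. Parts \ref{baire-borel} and \ref{baire=} are routine: for \ref{baire-borel}, every $g\in C(K)$ is Borel measurable, so the smallest \gsf{} making all continuous functions measurable is contained in $\cB(K)$; for \ref{baire=}, given \ref{baire-borel} it remains to show $\cB(K)\subseteq\Ba(K)$, and on a compact metric space every closed set $F$ is the zero set of the continuous function $x\mapsto d(x,F)$ and hence lies in $\Ba(K)$, while the closed sets generate $\cB(K)$.

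The heart of the measurability statements is the ``if'' direction of \ref{baire-map}, which I would prove directly from the definition: $\Ba(K)$ is by construction the smallest \gsf{} for which every $g\in C(K)$ is measurable, so it is generated by the sets $g\qw(B)$ with $g\in C(K)$ and $B\in\cB$. If $g\circ f$ is $\cS$-measurable for all such $g$, then $f\qw\bigpar{g\qw(B)}=(g\circ f)\qw(B)\in\cS$; since $\set{A: f\qw(A)\in\cS}$ is a \gsf{} containing all these generators, it contains $\Ba(K)$, so $f$ is measurable. Part \ref{baire-cont} is then immediate by taking $(S,\cS)=(K_1,\Ba(K_1))$ and $K=K_2$: for $g\in C(K_2)$ the composition $g\circ f$ is continuous, hence trivially $\Ba(K_1)$-measurable, so $f$ is Baire measurable. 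The ``only if'' direction of \ref{baire-map} then follows from \ref{baire-cont}, since if $f$ is measurable and $g\in C(K)$ then $g$ is Baire measurable and therefore $g\circ f$ is measurable.

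The main obstacle is \ref{baire-times}, where I would prove the two inclusions separately. The inclusion $\Ba(K_1)\times\Ba(K_2)\subseteq\Ba(K_1\times K_2)$ follows because the coordinate projections $\pi_i$ are continuous, hence Baire measurable by \ref{baire-cont}, so that each rectangle $A_1\times A_2=\pi_1\qw(A_1)\cap\pi_2\qw(A_2)$ with $A_i\in\Ba(K_i)$ lies in $\Ba(K_1\times K_2)$; these rectangles generate the product \gsf{}. For the reverse inclusion I would use the density, recalled in \refSS{SStensor}, of the linear span of the functions $(x,y)\mapsto g_1(x)g_2(y)$ with $g_i\in C(K_i)$ in $C(K_1\times K_2)=C(K_1)\itensor C(K_2)$. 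Each such product function, and hence each finite linear combination, is $\bigpar{\Ba(K_1)\times\Ba(K_2)}$-measurable, and since uniform limits of real-valued measurable functions are measurable, every $g\in C(K_1\times K_2)$ is $\bigpar{\Ba(K_1)\times\Ba(K_2)}$-measurable; therefore $\Ba(K_1\times K_2)\subseteq\Ba(K_1)\times\Ba(K_2)$. The one point requiring care is precisely this passage to the limit: it is essential that we have \emph{uniform} (and not merely pointwise) approximation, since that is what preserves measurability under the limit.
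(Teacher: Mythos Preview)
Your argument is correct and supplies the details that the paper omits (the paper simply declares \ref{baire-borel} and \ref{baire=} easy, cites Halmos or Bogachev for \ref{baire-times}, and says \ref{baire-map} follows from the definition with \ref{baire-cont} a consequence). Two small points are worth tightening. First, the ``only if'' direction of \ref{baire-map} does not actually require \ref{baire-cont}: that every $g\in C(K)$ is $(\Ba(K),\cB(\bbR))$-measurable is immediate from the definition of $\Ba(K)$, and \ref{baire-cont} as stated concerns maps between compact Hausdorff spaces, so does not literally apply to $g:K\to\bbR$. Your interleaving is therefore unnecessary; both directions of \ref{baire-map} follow straight from the definition, and \ref{baire-cont} then follows from \ref{baire-map} as you say. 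Second, your closing caveat is misplaced: pointwise limits of real-valued measurable functions are measurable, so uniformity is \emph{not} what is needed to pass measurability through the limit. What uniform approximation buys you is the density statement in $C(K_1\times K_2)$ itself, guaranteeing that every continuous $g$ is reached by such a sequence; once that sequence exists, pointwise convergence suffices for the measurability conclusion.
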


\begin{proof}
  \ref{baire-borel}  and \ref{baire=} are easy and well-known;
for \ref{baire-times} see \cite[Theorem 51E]{Halmos}
or \cite[Lemma 6.4.2]{Bogachev}.
\ref{baire-map} is a consequence of the definition of $\Ba(K)$, and
\ref{baire-cont} follows.
\end{proof}

  By \refL{Lbaire}\ref{baire=}, there is no reason to study Baire sets
  instead of the perhaps more well-known Borel sets for
metrizable compact spaces, since they coincide.
However, we shall mainly study non-metrizable compact spaces such as $\hI$,
and then the 
Baire \gsf{} is often better behaved than the Borel \gsf.
One example is seen in \refL{Lbaire}\ref{baire-times} above; the
corresponding result for 
Borel \gsf{s} is not true in general, and in particular not for $K=\hI$,
see \refP{P2}.
Another important example is the Riesz representation theorem, which takes
the following simple form using Baire measures.
Let $\MBa(K)$ be the Banach space of signed Baire measures on $K$, with
$\norm\mu$ the total variation of $\mu$, 
\ie, $\norm\mu:=|\mu|(K)$, where the measure $|\mu|$ is the variation of $\mu$.

\begin{proposition}[The Riesz representation theorem]\label{Priesz}
Let $K$ be a compact Hausdorff space.
There is an isometric bijection between the space $C(K)^*$ of bounded
continuous linear functionals on $C(K)$ and the space $\MBa(K)$ of signed
Baire measures on $K$, where a signed Baire measure $\mu$ corresponds to the
linear functional $f\mapsto\int_Kf\dd\mu$.
\end{proposition}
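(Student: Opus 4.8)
The plan is to establish the correspondence first in the direction from measures to functionals, and then to show the resulting map is a surjective isometry. For the forward map, given $\mu\in\MBa(K)$ the assignment $\Phi_\mu\colon f\mapsto\int_K f\dd\mu$ is linear, and $\abs{\Phi_\mu(f)}\le\int_K\abs f\dd\abs\mu\le\norm f\,\abs\mu(K)=\norm f\,\norm\mu$, so $\Phi_\mu\in C(K)^*$ with $\norm{\Phi_\mu}\le\norm\mu$; moreover $\mu\mapsto\Phi_\mu$ is plainly linear. It therefore suffices to prove that this map is onto and norm-preserving, since an isometry is automatically injective and the two facts together give the asserted isometric bijection.

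For surjectivity, take $\Phi\in C(K)^*$. First I would reduce to \emph{positive} functionals (those with $\Psi(f)\ge0$ whenever $f\ge0$): exploiting the vector-lattice structure of $C(K)$, set $\Phi^+(f):=\sup\set{\Phi(g):0\le g\le f}$ for $f\ge0$ (finite since $\Phi^+(f)\le\norm\Phi\,\norm f$) and extend by linearity, so that $\Phi=\Phi^+-\Phi^-$ with $\Phi^+,\Phi^-$ positive. It then suffices to represent a positive $\Psi$ by a positive Baire measure. Here I would invoke the Daniell--Stone theorem, which fits the Baire setting perfectly: a positive $\Psi$ is a Daniell integral on $C(K)$, the one nontrivial hypothesis being that $f_n\downto0$ pointwise (with $f_n\in C(K)$) forces $\Psi(f_n)\to0$, and this is immediate from Dini's theorem, since on the compact space $K$ such an $f_n$ decreases to $0$ \emph{uniformly}, whence $\Psi(f_n)\le\norm\Psi\,\norm{f_n}\to0$. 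As $C(K)$ is a vector lattice containing the constants, Daniell--Stone produces a finite measure $\nu$ on the \gsf{} generated by $C(K)$ --- which is exactly $\Ba(K)$ by definition --- with $\Psi(f)=\int_K f\dd\nu$ for all $f\in C(K)$. Applying this to $\Phi^+$ and $\Phi^-$ and subtracting yields a signed Baire measure representing $\Phi$.

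The main obstacle is the reverse norm bound $\norm{\Phi_\mu}\ge\norm\mu$ (which also delivers injectivity, as $\Phi_\mu=0$ then forces $\norm\mu=0$). Using the Jordan and Hahn decompositions, write $\mu=\mu^+-\mu^-$ and $K=P\cup N$ into disjoint Baire sets carrying $\mu^+$ and $\mu^-$. The goal is, for each $\eps>0$, to build $f\in C(K)$ with $\norm f\le1$ and $\int_K f\dd\mu\ge\abs\mu(K)-\eps=\norm\mu-\eps$; this amounts to approximating the $\pm1$-valued sign of $\mu$ by a continuous function. The key input is the inner regularity of finite Baire measures by zero sets: choose zero sets $Z\subseteq P$ and $W\subseteq N$ with $\mu^+(P\setminus Z)$ and $\mu^-(N\setminus W)$ both $<\eps/4$. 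Since $Z$ and $W$ are disjoint closed sets in the normal space $K$, Urysohn's lemma supplies a continuous $f\colon K\to[-1,1]$ with $f\equiv1$ on $Z$ and $f\equiv-1$ on $W$, and a direct estimate then gives $\int_K f\dd\mu\ge\norm\mu-\eps$. Combining this with $\norm{\Phi_\mu}\le\norm\mu$ yields $\norm{\Phi_\mu}=\norm\mu$, completing the proof that $\mu\mapsto\Phi_\mu$ is an isometric bijection.
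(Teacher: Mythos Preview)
Your argument is correct: the Daniell--Stone route (with Dini's theorem supplying the continuity condition on the positive functional) is a clean and standard way to obtain the Baire-measure version of Riesz, and your isometry step via the Hahn decomposition and inner regularity of finite Baire measures by zero sets is also sound. A couple of small points you glossed over --- additivity of $\Phi^+$ on the positive cone before extending, and the fact that compact Baire sets in a compact Hausdorff space are automatically $G_\delta$ and hence zero sets (so that inner regularity really does give you zero sets to feed into Urysohn) --- are routine, but worth a sentence each if this were to be written out in full.

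As for comparison with the paper: the paper does not supply its own proof of \refP{Priesz} at all. It simply states the result and refers the reader to \cite[\S56]{Halmos} (and, via \refR{Rriesz}, to the Borel-measure formulations in \cite{Cohn} and \cite{Bogachev}). Halmos's development in \S\S54--56 proceeds somewhat differently, building the measure from a regular content on compact $G_\delta$ sets rather than invoking Daniell--Stone directly, but the two approaches are close cousins and equally standard. Your proof is a self-contained alternative to the cited reference, not a reconstruction of anything the paper actually does.
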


\begin{remark}\label{Rriesz}
The Riesz representation theorem is perhaps more often stated in a
version using Borel measures, but then one has to restrict to \emph{regular}
signed Borel measures, see \eg{} \cite[Theorem 7.3.5]{Cohn}
or \cite[Theorem 7.10.4]{Bogachev}.
The connection between the two versions is that every
(signed) Baire measure on $K$ has a unique extension to a regular (signed)
Borel measure, see \cite[Theorem 54.D]{Halmos}
or \cite[Corollary 7.3.4]{Bogachev}.
\end{remark}

For a proof of \refP{Priesz}, see \cite[\S56]{Halmos}, or the references in
\refR{Rriesz} above.

\begin{example}\label{Ebaire}
  Although not needed for our results, it is interesting to note that
$\Ba(\hI)=\cB(\hI)$, but $\Ba(\hI^m)\subsetneq\cB(\hI^m)$, when $m\ge2$.
See \refApp{Abaire}.
\end{example}

\subsection{Some further notation}\label{SSnotation}
Let $\mm:=\setm$.

If $x\in\bbR$, then $\floor x$ and $\ceil x$ denote $x$ rounded down or up
to the nearest integer, respectively.

Recall that $t-$ is a point in $\hI\setminus I$ for $t\in\ooi$.
For completeness we define $0-:=0$.

Recall also that $\iota:I\to\hI$ denotes the inclusion mapping. 
Conversely, define the projection $\rho:\hI\to I$ by $\rho(t)=t$ and
$\rho(t-)=t$ for $t\in\oi$.
Let
\begin{equation}\label{phi}
  \phi:=\iota\circ\rho:\hI\to\hI
\end{equation}
be the composition of $\iota$ and $\rho$, \ie, the projection
\begin{equation}\label{phit}
  \begin{cases}
\phi(t)=t,&
\\
\phi(t-)=t.&
  \end{cases}
\end{equation}
Note that $\phi\circ\phi=\phi$, \ie, $\phi$ is a projection map.

If $A\subseteq\oi$, let $A-:=\set{t-:t\in A}\subset\hI$, and
$\hA:=\rho\qw(A)=A\cup{(A-)}\subset\hI$.
In particular, if $s\in\oi$, then $\widehat{\set{s}}=\set{s,{s-}}$.

We sometimes denote elements of $\hI^m$ by $\vxt=(\mmxx{\xt}{,})$.
Let $\pi_i:\hI^m\to\hI$  denote the projection 
on the $i$-th coordinate: $\pi_i(\vxt)=\xt_i$.

If $f\in\doi$ and $t\in\ooi$, let
\begin{equation}
  \gD f(t):=f(t)-f(t-).
\end{equation}
This defines a bounded linear map $\gD:\doi\to c_0(\ooi)$, with norm
$\norm{\gD}=2$;
see \cite[Theorem 9.1]{SJ271} for further properties.

We extend this to several dimensions by defining, for $f\in\doim$ and
$i\in\mm:=\setm$, 
\begin{equation}\label{gDi}
  \gD_i f(t_1,\dots,t_m):=f(t_1,\dots,t_m)-f(t_1,\dots,t_i-,\dots,t_m),
\end{equation}
\ie, the jump along the $i$-th coordinate at $t=(t_1,\dots,t_m)$.
(This is 0 when $t_i=0$, by our definition ${0-}=0$.)

We further define, for any subset $J=\set{j_1,\dots,j_\ell}\subseteq\mm$,
\begin{equation}\label{gDJ}
  \gD_J f := \gD_{j_1}\dotsm\gD_{j_\ell} f.
\end{equation}
Note that the operators $\gD_i$ commute, so their order in \eqref{gDJ}
does not matter.

\begin{remark}
  In particular, \eqref{gDi} shows that for $f_1,\dots,f_m\in\doi$,
\begin{equation}
  \gD_i(\mmx{f}{\tensor})=f_1\tensor\dots\tensor (\gD
  f_i)\tensor\dots\tensor f_m.
\end{equation}
Consequently, identifying $\doim$ and $\doi\itpm$ as in \refSS{SStensor},
\begin{equation}
\gD_i=\Id\itensorr\Id\itensorq \gD\itensorq\Id\itensorr \Id,
\end{equation}
where $\Id$ is the identity operator and there is a single $\gD$ in the
$i$-th position. Thus $\gD_i$ can be regarded as a bounded linear map into 
$\doi\itensorr \allowbreak c_0(\ooi)\itensorr\doi$, and similarly for $\gD_J$.
However, we will not use this point of view; we
just regard $\gD_if$ and $\gD_J f$ as the functions on $I^m$ given by 
\eqref{gDi}--\eqref{gDJ}.
\end{remark}

\section{Some projections}\label{Sproj}

Recall the mappings  $\iota$, $\rho$ and $\phi$ from \refSS{SSnotation}.

\begin{lemma}
\label{L1}  
\begin{thmenumerate}
\item \label{L1iota}
$\iota:I\to\hI$ is Baire measurable (but not continuous).
\item \label{L1rho}
$\rho:\hI\to I$ is continuous, and thus Baire measurable.
\item \label{L1phi}
$\phi:\hI\to\hI$ is Baire measurable (but not continuous).
\end{thmenumerate}
\end{lemma}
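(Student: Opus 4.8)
The plan is to prove the three parts in the order \ref{L1rho}, \ref{L1iota}, \ref{L1phi}, since the continuity of $\rho$ and the Baire measurability of $\iota$ are exactly the ingredients needed to obtain $\phi=\iota\circ\rho$ by composition. The one tool I expect to lean on throughout is the characterisation of Baire measurability in \refL{Lbaire}\ref{baire-map}: a map into $(\hI,\Ba(\hI))$ is measurable precisely when it becomes Borel measurable after composition with every $g\in C(\hI)$.

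For \ref{L1rho} I would verify continuity of $\rho:\hI\to I$ on a subbase of the usual topology of $I$, namely the rays $[0,b)$ and $(a,1]$. A short order computation gives $\rho\qw([0,b))=\set{x\in\hI:x<b-}$ and $\rho\qw((a,1])=\set{x\in\hI:x>a}$, both of which are open rays in the order topology of $\hI$; hence $\rho$ is continuous. Since $I$ and $\hI$ are compact Hausdorff, \refL{Lbaire}\ref{baire-cont} then yields Baire measurability, completing \ref{L1rho}.

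For \ref{L1iota}, the non-continuity of $\iota$ is already recorded in \refSS{SSsplit} (concretely, $\iota\qw((s-,1])=[s,1]$ is not open). For Baire measurability I would apply \refL{Lbaire}\ref{baire-map} with $(S,\cS)=(I,\cB(I))$ and $K=\hI$, reducing the problem to showing that $g\circ\iota:I\to\bbR$ is Borel measurable for every $g\in C(\hI)$. Here $g\circ\iota$ is just the restriction $g|_I$, and under the isometric isomorphism $C(\hI)\cong\doi$ of \refSS{SSsplit} this restriction is the corresponding element of $\doi$. Every function in $\doi$ is càdlàg and hence Borel measurable (any right-continuous $f$ is the pointwise limit of the Borel step functions $t\mapsto f(\ceil{nt}/n)$, which converges by right-continuity). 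Thus $g\circ\iota$ is Borel measurable for all $g$, and $\iota$ is Baire measurable.

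Finally, \ref{L1phi} follows by composition. Since $I$ is metrizable, \refL{Lbaire}\ref{baire=} gives $\Ba(I)=\cB(I)$, so \ref{L1rho} makes $\rho:(\hI,\Ba(\hI))\to(I,\cB(I))$ measurable and \ref{L1iota} makes $\iota:(I,\cB(I))\to(\hI,\Ba(\hI))$ measurable; their composite $\phi$ is therefore Baire measurable. Non-continuity of $\phi$ is seen as for $\iota$: $\phi\qw((s-,1])=[s-,1]$, which is not open because $s-$ is not an interior point. The genuinely substantive step is \ref{L1iota}: \refL{Lbaire}\ref{baire-map} converts Baire measurability of $\iota$ into a statement about all the scalar functions $g\circ\iota$, and the crux is recognising these as exactly the $\doi$ functions, whose Borel measurability is elementary. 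The continuity of $\rho$, the input $\Ba(I)=\cB(I)$, and the final composition are all routine.
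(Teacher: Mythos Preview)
Your proof is correct and follows essentially the same approach as the paper: both use \refL{Lbaire}\ref{baire-map} to reduce Baire measurability of $\iota$ to Borel measurability of the restrictions $g|_I\in\doi$, verify continuity of $\rho$ directly, and obtain $\phi$ by composition. The only cosmetic differences are that you give an explicit subbase computation for the continuity of $\rho$ (the paper just says it follows from the definitions, with an alternative via the Gelfand transform) and you show $\phi$ is not continuous via a specific non-open preimage, whereas the paper observes that $\phi(\hI)=I$ is a proper dense, hence non-compact, subset of $\hI$.
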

\begin{proof}
\pfitemref{L1iota}
  We have already remarked that $\iota$ is not continuous.

To see that $\iota$ is Baire measurable, 
\ie, that $\iota:(I,\Ba)\to(\hI,\Ba)$ is measurable,
let $g\in C(\hI)$.
Then $g\circ\iota\in\doi$, see \refSS{SSsplit}, and thus
$g\circ\iota: (I,\Ba)=(I,\cB)\to(\bbR,\cB)$ is measurable.
Thus $\iota$ is Baire measurable by \refL{Lbaire}\ref{baire-map}.

\pfitemref{L1rho}
The continuity follows from the definitions.

Alternatively, we may note that if $f\in C(I)$, then $f\in D(I)$, so it has
by \refSS{SSsplit} a continuous extension (which also is its Gelfand
transform) $\hat f\in C(\hI)$, given by $\hat f(t-)=f(t-)=f(t)$; hence 
$\hat f=f\circ\rho$.  In particular, taking $f$ to be the identity $i$ with
$i(x)=x$, we have $\hat i=\rho$, and thus $\rho\in C(\hI)$. 

\pfitemref{L1rho}
That $\phi=\iota\circ\rho$ is Baire measurable follows by \ref{L1iota} and
\ref{L1rho}. 
To see that $\phi$ is not continuous, it suffices to note that $\phi(\hI)=I$
is a proper dense subset of $\hI$, and thus not a compact subset of $\hI$.
\end{proof}

For a fixed $m$ and $1\le i\le m$, define $\phi_i=\phi_{i,m}:\hI^m\to\hI^m$ by 
\begin{equation}\label{phii}
  \phi_i(\xt_1,\dots,\xt_m):=(\xt_1,\dots,\phi(\xt_i),\dots,\xt_m)
\end{equation}
(with the identity in all coordinates except  the $i$-th).
Then $\phi_1,\dots,\phi_m$ are commuting projections $\hI^m\to\hI^m$.

Since $\phi$ is Baire measurable, and $\Ba(\hI^m)=\Ba(\hI)^m$ by
\refL{Lbaire}\ref{baire-times}, each $\phi_i$ is  Baire measurable.
Hence, $\phi_i$ induces a map
$\Phi_i:\MBa(\hI^m)\to \MBa(\hI^m)$ such that
\begin{equation}\label{b4}
  \int_{\hI^m} f\dd\Phi_i(\mu) = \int_{\hI^m}f\circ\phi_i\dd\mu,
\end{equation}
for Baire measurable and, say, bounded $f:\hI^m\to\bbR$.
Letting $\tau^\sharp$ denote the  map $\MBa\to\MBa$ induced 
as in \eqref{b4} by
a function $\tau:\hI^m\to\hI^m$, we thus have $\Phi_i=\phi_i^\sharp$,
and 
$\Phi_i\circ\Phi_i=\phi_i^\sharp\circ\phi_i^\sharp
=(\phi_i\circ\phi_i)^\sharp
=\phi_i^\sharp
=\Phi_i$.
Hence, $\Phi_i$ is a projection in $\MBa(\hI^m)$.
Similarly, $\Phi_i\circ\Phi_j=\Phi_j\circ\Phi_i$, so the projections
$\Phi_i$ commute (beacuse the projections $\phi_i$ do).

Let $\Psi_i:=\Id-\Phi_i$, where $\Id$ is the identity operator; thus, by
\eqref{b4},
\begin{equation}\label{b4Psi}
  \int_{\hI^m} f\dd\Psi_i(\mu) 
= \int_{\hI^m}\bigpar{f(\vxt)-f(\phi_i(\vxt))}\dd\mu(\vxt),
\end{equation}
for bounded Baire measurable $f$ on $\hI^m$.

Note that $\Psi_i$ also is a projection in $\MBa(\hI^m)$.
It follows immediately  that $\Phi_1,\dots,\Phi_m$ 
and $\Psi_1,\dots,\Psi_m$ are
commuting projections in $\MBa(\hI^m)$.
Furthermore,  for any $\mu\in\MBa(\hI^m)$,
\begin{align}
  \norm{\Phi_i(\mu)}&\le\norm{\mu},\label{normPhi}
\\
  \norm{\Psi_i(\mu)}&\le\norm{\mu}+  \norm{\Phi_i(\mu)}\le2\norm{\mu}.
\label{normPsi}
\end{align}

\begin{lemma}\label{LPsi}
If $\mu\in\MBa(\hI^m)$, then for each $i\in\setm$ there is a countable subset
$A_i\subset\ooi$ such that $\Psi_i (\mu)$ is supported on the set
$\pi_i\qw(\hA_i)=\set{(t_1,\dots,t_m)\in\hI^m:t_i\in \hA_i}$. 

Furthermore, if $s\in \ooi$, let $\Psi_i(\mu)_{s-}$ and $\Psi_i(\mu)_{s}$
denote the restrictions of $\Psi_i(\mu)$ to $\pi_i\qw(s-)$ and $\pi_i\qw(s)$,
respectively, regarded as measures on $\hI^{m-1}$; then
\begin{equation}
  \label{Psi+-}
\Psi_i(\mu)_{s-}=-\Psi_i(\mu)_{s}.
\end{equation}
\end{lemma}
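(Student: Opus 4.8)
The plan is to work from the explicit set-function description of $\Psi_i$. Taking $f=\ett{E}$ in \eqref{b4Psi} gives, for every Baire set $E\subseteq\hI^m$,
\begin{equation*}
  \Psi_i(\mu)(E)=\mu(E)-\mu\bigpar{\phi_i\qw(E)},
\end{equation*}
which is legitimate since $\phi_i$ is Baire measurable. Writing $\nu:=\Psi_i(\mu)$, I would first analyse $\nu$ fibrewise. Each singleton $\set{s}$ and $\set{s-}$ is a Baire set in $\hI$ (points are $G_\delta$, as $\hI$ is compact Hausdorff and first countable; compare \refE{Ebaire}), so $\pi_i\qw(s)$ and $\pi_i\qw(s-)$ are Baire in $\hI^m$; deleting the (constant) $i$-th coordinate identifies each with $\hI^{m-1}$, and I let $\mu_{s-}$ be the measure $B\mapsto\mu\bigpar{\pi_i\qw(s-)\cap\sigma_i\qw(B)}$ on $\hI^{m-1}$, where $\sigma_i:\hI^m\to\hI^{m-1}$ deletes the $i$-th coordinate. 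The key point is that $\phi_i$ collapses $s-$ onto $s$ and fixes every full point and $0$; hence $\phi_i\qw$ of a set in the fibre over $s$ is the corresponding set in the fibre over $\set{s,s-}$, while $\phi_i\qw$ of a set in the fibre over $s-$ is empty. Substituting this into the displayed formula yields $\nu_s=-\mu_{s-}$ and $\nu_{s-}=\mu_{s-}$ for the restrictions to $\pi_i\qw(s)$ and $\pi_i\qw(s-)$, which is exactly \eqref{Psi+-}; it also shows $\nu$ vanishes on $\pi_i\qw(0)$.

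For the support statement I would set $A_i:=\set{s\in\ooi:\abs{\mu}\bigpar{\pi_i\qw(s-)}>0}$. Since the fibres $\pi_i\qw(s-)$ are pairwise disjoint and $\abs{\mu}$ is finite, $A_i$ is countable. It then remains to prove that $\nu(E)=0$ for every Baire $E\subseteq\pi_i\qw(G)$, where $G:=\hI\setminus\hA_i$; this gives $\abs{\nu}\bigpar{\pi_i\qw(G)}=0$. Here I would write $\nu(E)=\mu\bigpar{E\setminus\phi_i\qw(E)}-\mu\bigpar{\phi_i\qw(E)\setminus E}$ and note that, since $\phi_i$ fixes every point whose $i$-th coordinate is full or $0$, the symmetric difference $D:=E\diff\phi_i\qw(E)$ meets only fibres $\pi_i\qw(s-)$ over minus points. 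Moreover $\phi(G)\subseteq G$, so $\phi_i\qw(E)\subseteq\pi_i\qw(G)$ and hence $D\subseteq\pi_i\qw(G)$; thus every relevant $s$ lies outside $A_i$, so each $D\cap\pi_i\qw(s-)$ is $\abs{\mu}$-null.

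The main obstacle is that $D$ a priori lies in an \emph{uncountable} union $\bigcup_{s\notin A_i}\pi_i\qw(s-)$ of null fibres, so finiteness of $\abs{\mu}$ does not at once give $\abs{\mu}(D)=0$. To overcome this I would use the following structural fact about $\hI$: a Baire set contained in the minus-point fibres of coordinate $i$ already lies in \emph{countably} many of them. Indeed, $D$ belongs to the $\sigma$-field generated by some countable subfamily $\set{F_n}\subseteq C(\hI^m)$ (a standard property of $\sigma$-fields generated by a family); and since linear combinations of products $\prod_j\hat g_j(\xt_j)$ are dense in $C(\hI^m)=\doim$, each $F_n$, and hence $D$, lies in the $\sigma$-field generated by $\set{\xt\mapsto\hat g(\xt_j):g\in\cG,\ j\in\mm}$ for a suitable countable $\cG\subseteq\doi$. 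Each $g\in\cG$ has only countably many jumps (as $\gD g\in c_0(\ooi)$), so for all but countably many $s$ the functions $\hat g(\xt_i)$ fail to separate the full point $s$ from $s-$; consequently $D$, which contains no full point in coordinate $i$, can contain no point over such an $s$. Thus $D\subseteq\bigcup_{s\in J}\pi_i\qw(s-)$ for a countable $J$, and since $D\cap\pi_i\qw(s-)$ is null for every $s\notin A_i$, we get $\abs{\mu}(D)=0$ and therefore $\nu(E)=0$. Combining the fibrewise computation of the first step with this vanishing proves both assertions.
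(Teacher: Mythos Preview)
Your proof is correct and takes a genuinely different route from the paper's. The paper works on the dual side: it sets $\bar\mu:=\Psi_i(\mu)$, defines $A:=\{s:\bar\mu|_{E_s}\neq0\}$ (which coincides with your $A_i$, since your fibrewise computation gives $\nu_{s-}=\mu_{s-}$, $\nu_s=-\mu_{s-}$), and shows $\bar\mu=\sum_{s\in A}\bar\mu|_{E_s}$ by testing against continuous functions of tensor-product form $f_1(\xt_1)g(\xt_2,\dots,\xt_m)$; for such $f$ the integrand $f-f\circ\phi_1$ in \eqref{b4Psi} vanishes except on the minus-fibres over the countable jump set $D_{f_1}$ of $f_1$, which reduces the integral to a countable sum that matches $\int f\,d\nu$. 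The identity \eqref{Psi+-} is then derived by a separate tensor-product computation. Your approach instead works directly with the set-function formula and gets \eqref{Psi+-} immediately from the fibrewise analysis; for the support you isolate the Baire set $D=E\diff\phi_i\qw(E)$ and invoke a structural fact about $\Ba(\hI^m)$: a Baire set lying entirely in the minus-fibres of one coordinate meets only countably many of them. Both arguments ultimately rest on the countability of jump sets of functions in $\doi$, but the paper exploits it through test functions while you exploit it through the generators of the Baire $\sigma$-field. Your structural lemma can be stated more cleanly by citing \refL{Lbaire}\ref{baire-times} directly: since $\Ba(\hI^m)=\Ba(\hI)^m$, the set $D$ lies in the $\sigma$-field generated by countably many sets $\pi_j\qw(B_j)$ with $B_j\in\Ba(\hI)$, and (by \refP{P1} or the argument you sketch) each $B_j$ separates $s$ from $s-$ for only countably many $s$; hence the atom containing any point of $D$ with $\xt_i=s-$ also contains the point with $\xt_i=s$ unless $s$ lies in a fixed countable set, forcing your conclusion. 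Your route buys a quicker derivation of \eqref{Psi+-}; the paper's route avoids the detour through the structure of Baire sets.
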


\begin{proof}
For notational convenience, assume $i=1$,
and let $\bmu:=\Psi_1(\mu)$.

For each $s\in\ooi$, let 
$E_s:=\pi_1\qw(\widehat{\set{s}})=\set{{s-},s}\times\hI^{m-1}$,
and let
$\bmu_s$ denote the restriction of $\bmu$ to $E_s$.
($E_s$ is a Baire set in $\hI^m$, since \set{s} is measurable in $I$ and
$\pi_1$ and $\rho$ are Baire measurable as shown above; 
hence $\bmu_s$  is well defined.) 

For any finite set $F\subset\ooi $, 
\begin{equation}\label{treff}
\sum_{s\in F}\norm{\bmu_s}=\sum_{s\in F}|\bmu|(E_s)  
=|\bmu|\Bigpar{\bigcup_{s\in F}E_s}\le|\bmu|(\hI^m)
=\norm\bmu.
\end{equation}
Hence, 
$\sum_{s\in\ooi}\norm{\bmu_s}\le\norm\bmu$
and
$\bmu_s\neq0$ only for countably many $s$.
  
Let $A:=\set{s:\bmu_s\neq0}$ and $\nu:=\sum_{s\in\ooi}\bmu_s=\sum_{s\in A}\bmu_s$,
where the sum converges in $\MBa(\hI^m)$ by \eqref{treff}.
We shall prove that $\bmu=\nu$.
In order to show this, recall that $C(\hI^m)=C(\hI)\itensorq C(\hI^{m-1})$,
see \refSS{SStensor}, 
and thus linear combinations of
functions $f$ of the form
\begin{equation}\label{ff}
f(\xt_1,\dots,\xt_m)=f_1(\xt_1)g(\xt_2,\dots,\xt_m),  
\qquad  f_1\in C(\hI),\, g\in C(\hI^{m-1}),
\end{equation}
are dense in $C(\hI^m)$.
Hence,
it suffices
to show that
\begin{equation}\label{q1}
  \int_{\hI^m} f\dd\bmu=\int_{\hI^m} f\dd\nu
\end{equation}
for every $f$ as in \eqref{ff}.
For such $f$, \eqref{phii} and \eqref{phit} yield, for
$\vxt=(\xt_1,\dots,\xt_m)\in\hI^m$, 
\begin{equation}\label{q2}
  \begin{split} 
f(\vxt)- f(\phi_1(\vxt))
&=\bigpar{f_1(\xt_1)-f_1(\phi(\xt_1))}g(\xt_2,\dots,\xt_m)
\\&
=
\begin{cases}
0, & \xt_1=s\in I,  
\\
-\gD f_1(s)\, g(\xt_2,\dots,\xt_m), & \xt_1={s-}.
\end{cases}
  \end{split}
\end{equation}
Recall that $f_1\in C(\hI)=\doi$; regard $f_1$ as an element of $\doi$ and
let $D_{f_1}\subset\ooi$ be the countable set of discontinuities of $f_1$.
Then, by \eqref{q2}, 
$f(\vxt)- f(\phi_1(\vxt))=0$ unless $\xt_1=u-$ for some $u\in D_{f_1}$, and then
$\vxt\in E_u$. Consequently, \eqref{b4Psi} yields
\begin{equation}\label{b4Psi+}
  \int_{\hI^m} f\dd\bmu
=  \int_{\hI^m} f\dd\Psi_1(\mu) 
=\sum_{u\in D_{f_1}} \int_{E_u}\bigpar{f(\vxt)-f(\phi_1(\vxt))}\dd\mu(\vxt).
\end{equation}
Furthermore, applying \eqref{b4Psi} to the function $f\etta_{E_s}$, we also
find, for any $s\in\ooi$,
\begin{equation}\label{b4Psi++}
  \int_{E_s} f\dd\bmu
=  \int_{E_s} f\dd\Psi_1(\mu) 
= \int_{E_s}\bigpar{f(\vxt)-f(\phi_1(\vxt))}\dd\mu(\vxt).
\end{equation}
This integral vanishes when $s\notin A$, because then $\bmu=0$ on $E_s$, and
also when $s\notin D_{f_1}$, because then $f(\vxt)-f(\phi_1(\vxt))=0$ on $E_s$ by
\eqref{q2}. 
Consequently, \eqref{b4Psi+}--\eqref{b4Psi++} yield
\begin{equation}\label{b4Psi3}
  \int_{\hI^m} f\dd\bmu
=\sum_{u\in D_{f_1}} \int_{E_u}f\dd\bmu
=\sum_{u\in A} \int_{E_u}f\dd\bmu
=\int_{\hI^m}f\dd\nu,
\end{equation}
which verifies \eqref{q1} and thus $\bmu=\nu=\sum_{s\in A}\bmu_s$, which is
supported on $\bigcup_{s\in A}E_s=\pi_1\qw(\hA)$.

Finally, let $s\in \ooi$. Again, let $f$ be as in \eqref{ff}. Then, by
\eqref{b4Psi++} and \eqref{q2},
letting $\mu_{s-}$ denote the restriction of $\mu$ to $\set{s-}\times
\hI^{m-1}$, regarded as a measure on $\hI^{m-1}$,
and $\vxt':=(\xt_2,\dots,\xt_m)$,
\begin{equation}
  \begin{split}
\int_{E_s}f\dd\bmu 
&= \int_{\set{s-}\times\hI^{m-1}} (-\gD f_1(s) g(\vxt'))\dd\mu     
\\
&=\bigpar{f_1(s-)-f_1(s)}\int_{\hI^{m-1}} g(\vxt')\dd\mu_{s-}(\vxt')
\\&
=\int_{\hI^{m-1}} f(s-,\vxt')\dd\mu_{s-}(\vxt')
-\int_{\hI^{m-1}} f(s,\vxt')\dd\mu_{s-}(\vxt').
  \end{split}
\end{equation}
Hence, $\Psi_1(\mu)_{s-}=\mu_{s-}=-\Psi_1(\mu)_{s}$.
\end{proof}

\section{The dual space}

The continuous linear functionals on $\doi$ were described by
\citet{Pestman}, see also \cite[\S9.1]{SJ271}.
We extend this result to several dimensions as follows.

\begin{theorem}\label{Tdual}
  Every continuous 
linear functional $\chi$ on $\doim$ has a unique
  representation
  \begin{equation}
    \label{d1a}
\chi(f)=\sum_{J\subseteq\mm}\chi_J(f)
  \end{equation}
such that for every $J=\set{j_1,\dots,j_\ell}$, with $0\le\ell\le m$,
writing $J\comp:=\mm\setminus J=\set{j'_1,\dots,j'_{m-\ell}}$,
\begin{equation}\label{d1b}
  \chi_J(f)=\sum_{t_{j_1},\dots,t_{j_\ell}\in\ooi}
\int_{I^{m-\ell}} \gD_J f(t_1,\dots,t_m)
  \dd\mu_{J;t_{j_1},\dots,t_{j_\ell}}(t_{j'_1},\dots,t_{j'_{m-\ell}})
\end{equation}
where each $\mu_{J;t_{j_1},\dots,t_{j_\ell}}$ is a signed Borel measure on
$I^{m-\ell}$ and
\begin{equation}\label{d1c}
\norm{\mu_J}:=\sum_{t_{j_1},\dots,t_{j_\ell}\in\ooi}\norm{\mu_{J;t_{j_1},\dots,t_{j_\ell}}}
<\infty.
\end{equation}

Furthermore,
\begin{equation}\label{d1d}
2^{-m}  \norm{\chi} \le \sum_{J\subseteq\mm}\norm{\mu_J} \le 3^m\norm\chi.
\end{equation}

Conversely, every such family of signed Borel measures 
$\mu_{J;t_{j_1},\dots,t_{j_\ell}}$ satisfying \eqref{d1c} defines a
continuous linear functional on $\doim$ by \eqref{d1a}--\eqref{d1b}.
\end{theorem}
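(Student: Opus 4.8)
The plan is to start from the Riesz representation theorem (\refP{Priesz}): identifying $\doim$ with $C(\hI^m)$ as in \refSS{SStensor}, every continuous linear functional $\chi$ is given by $\chi(f)=\int_{\hI^m}f\dd\mu$ for a unique $\mu\in\MBa(\hI^m)$, where $f$ now denotes the continuous extension to $\hI^m$ of the given $f\in\doim$. Since $\Phi_1,\dots,\Phi_m$ and $\Psi_i:=\Id-\Phi_i$ are commuting projections on $\MBa(\hI^m)$ with $\Phi_i+\Psi_i=\Id$, expanding the product $\Id=\prod_{i=1}^m(\Phi_i+\Psi_i)$ yields the decomposition $\mu=\sum_{J\subseteq\mm}\mu'_J$ with $\mu'_J:=\bigl(\prod_{i\in J}\Psi_i\bigr)\bigl(\prod_{i\in J\comp}\Phi_i\bigr)\mu$. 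I then set $\chi_J(f):=\int_{\hI^m}f\dd\mu'_J$, so that \eqref{d1a} holds, and the task reduces to putting each $\chi_J$ into the form \eqref{d1b}.

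The core is the rewriting of $\int f\dd\mu'_J$. Dually, $\int f\dd\Phi_i(\mu)=\int f\circ\phi_i\dd\mu$ and $\int f\dd\Psi_i(\mu)=\int(f-f\circ\phi_i)\dd\mu$ by \eqref{b4} and \eqref{b4Psi}. A direct expansion using \eqref{phit} shows that $\prod_{i\in J}(f-f\circ\phi_i)\prod_{i\in J\comp}f\circ\phi_i$, evaluated at $\vxt$, vanishes unless $\xt_i=t_i-$ for every $i\in J$, and at such a point it equals $\gD_J f$ evaluated with the $J$-coordinates at the jump locations $t_i$ and the $J\comp$-coordinates at $\rho(\xt_i)\in I$; here the alternating signs produced by the $\ell$-fold difference (with $\ell=|J|$) assemble precisely into the operator $\gD_J$ of \eqref{gDJ}. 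Iterating \refL{LPsi} over the coordinates $i\in J$ splits $\mu'_J$ into slices indexed by the jump locations $(t_{j_1},\dots,t_{j_\ell})$, the relation \eqref{Psi+-} identifying the $t_i-$ slice with the negative of the $t_i$ slice; pushing the remaining $J\comp$-coordinates forward from $\hI$ to $I$ by $\rho$ (which is exactly the effect of the factors $\Phi_i$, $i\in J\comp$) produces signed Borel measures $\mu_{J;t_{j_1},\dots,t_{j_\ell}}$ on $I^{m-\ell}$ and gives \eqref{d1b}. The summability \eqref{d1c} is immediate, since $\sum_t\norm{\mu_{J;t}}\le\norm{\mu'_J}\le 2^{|J|}\norm\mu<\infty$ by \eqref{normPhi}--\eqref{normPsi}.

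For the norm bounds \eqref{d1d}, the upper estimate follows by summing these inequalities: $\sum_{J}\norm{\mu_J}\le\sum_J 2^{|J|}\norm\mu=\sum_{\ell=0}^m\binom m\ell 2^\ell\norm\chi=3^m\norm\chi$, using $\norm\mu=\norm\chi$. For the lower estimate, $\gD_J f$ is an alternating sum of $2^{|J|}$ values of $f$, so $\norm{\gD_J f}\le 2^{|J|}\norm f$, whence $|\chi_J(f)|\le\norm{\mu_J}\,\norm{\gD_J f}\le 2^{|J|}\norm{\mu_J}\,\norm f$; summing gives $|\chi(f)|\le\bigl(\sum_J 2^{|J|}\norm{\mu_J}\bigr)\norm f\le 2^m\bigl(\sum_J\norm{\mu_J}\bigr)\norm f$, that is $2^{-m}\norm\chi\le\sum_J\norm{\mu_J}$.

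For the converse, given any family satisfying \eqref{d1c} I reverse the construction: lift each Borel measure $\mu_{J;t_{j_1},\dots,t_{j_\ell}}$ on $I^{m-\ell}$ to a Baire measure on $\hI^{m-\ell}$ through the (Baire measurable) inclusion $\iota$ of \refL{L1}, place it at the $J$-coordinate values $t_{j_r}-$ and, imposing the sign relation \eqref{Psi+-}, also at $t_{j_r}$, and sum over the jump locations and over $J$ to obtain $\mu\in\MBa(\hI^m)$, finite by \eqref{d1c}; then $\chi(f):=\int f\dd\mu$ is continuous and is represented by the given family. Uniqueness follows from the projection structure: the piece $\mu'_J$ lies in the range of $\prod_{i\in J}\Psi_i\prod_{i\in J\comp}\Phi_i$ (it is annihilated by $\Phi_i$ for $i\in J$ by the sign relation, and fixed by $\Phi_i$ for $i\in J\comp$), so applying this commuting idempotent to $\mu=\sum_{J'}\mu'_{J'}$ recovers exactly $\mu'_J$; since $\mu$ is determined by $\chi$ through \refP{Priesz}, so is each $\mu'_J$, and hence, by restriction to slices and $\rho$-pushforward, each $\mu_{J;t}$, which shows that building and decomposing are mutually inverse. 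The main obstacle is the core computation of the second paragraph: tracking the $2^{|J|}$ alternating signs generated by the commuting differences $\Id-\Phi_i$ and matching them, via \eqref{Psi+-}, to the jump operator $\gD_J$, while simultaneously checking that the frozen $J\comp$-coordinates yield genuine Borel measures on $I^{m-\ell}$.
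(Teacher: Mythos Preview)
Your approach is exactly the paper's: Riesz representation on $C(\hI^m)$, the decomposition $\mu=\sum_J\mu'_J$ via the commuting projections $\Phi_i,\Psi_i$, slicing each $\mu'_J$ by \refL{LPsi} and \eqref{Psi+-}, pushing the $J\comp$-coordinates to $I$ by $\rho$, and the same norm estimates, converse, and uniqueness argument. One slip to fix when you write it out: your expression ``$\prod_{i\in J}(f-f\circ\phi_i)\prod_{i\in J\comp}f\circ\phi_i$'' has to be read as the composition of the commuting operators $f\mapsto f-f\circ\phi_i$ ($i\in J$) and $f\mapsto f\circ\phi_i$ ($i\in J\comp$) applied to a single $f$, not a pointwise product of functions, and at a point with $\xt_i=t_i-$ for all $i\in J$ its value is $(-1)^{|J|}\gD_J f$, not $+\gD_J f$; that extra sign is precisely what \eqref{Psi+-} supplies when you sum over the $2^{|J|}$ slices of $\mu'_J$, so \eqref{d1b} still comes out correctly.
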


Note that the sum in \eqref{d1b} formally is uncountable (when $\ell>0$),
but \eqref{d1c} implies that $\mujt$ is non-zero only for a countable set of 
$(t_{j_1},\dots,t_{j_\ell})$, so all sums are really countable.

Note also that for $J=\emptyset$, with $\ell=0$, \eqref{d1b} reduces to
\begin{equation}\label{d1b0}
  \chi_\emptyset(f)=
\int_{I^{m}}  f(t_1,\dots,t_m)  \dd\mu_{\emptyset}(t_{1},\dots,t_{m}),
\end{equation}
so this term in \eqref{d1a} is simply $\int f\dd\mu_\emptyset$.
For the other extreme, $\ell=m$, 
$\mu_{\mm;t_1,\dots,t_m}$ is a signed measure on the one-point space $I^0$, \ie, a
real number, and \eqref{d1b} is interpreted as
\begin{equation}\label{d1bm}
  \chi_{\mm}(f)=\sum_{t_{1},\dots,t_{m}\in\ooi}
\mu_{\mm;t_{1},\dots,t_{m}}
\gD_{\mm} f(t_1,\dots,t_m) ,
\end{equation}
where 
$\norm{\mu_{\mm}}:=\sum_{t_1,\dots,t_m}|\mu_{\mm;t_{1},\dots,t_{m}}|<\infty$
and, again, the sums really are countable.
(In other words, $\mu_{\mm}=(\mu_{\mm;t_{1},\dots,t_{m}})$ is an element of
$\ell^1(\oi^m)$.)

\begin{proof}
  Since $\doim\cong C(\hI^m)$ (isometrically), we can use the Riesz
  representation theorem \refP{Priesz} and represent $\chi$ by a signed
Baire measure $\mu$ on $\hI^m$. We use the projections in \refS{Sproj} and 
expand $\mu$ as
\begin{equation}\label{orange}
  \mu=(\Phi_1+\Psi_1)\dotsm(\Phi_m+\Psi_m)\mu
=
\sum_{J\subseteq\mm} \tmu_J,
\end{equation}
where
\begin{equation}\label{muJ}
  \tmu_J:=\Bigpar{\prod_{i\notin J} \Phi_i\prod_{j\in J} \Psi_j }(\mu).
\end{equation}
We define  $\chi_J(f):=\int f\dd\tmu_J$; then \eqref{d1a} holds by
\eqref{orange}, and we proceed to show the representation \eqref{d1b}.
If $j\in J$, then $\Psi_j(\tmu_J)=\tmu_J$, and thus by \refL{LPsi},
$\tmu_J$ is supported on the set $\pi_j\qw(\hA_j)$ for some countable sets
$A_j\subset\ooi$.
 
Suppose for notational convenience that $J=\set{1,\dots,\ell}$.
Then, $\tmu_J$ is thus supported on 
\begin{equation}\label{white}
\bigcap_{j=1}^\ell\pi_j\qw(\hA_j)=\hA_1\times\dots\times\hA_\ell\times\hI^{m-\ell}.
\end{equation}

For $\xt_1,\dots,\xt_\ell\in\hI$,
let
$F_{\xt_1,\dots,\xt_\ell}:=
\set{\xt_1}\times\dots\times\set{\xt_\ell}\times\hI^{m-\ell}$ and
let $\tmu_{J;\xt_1,\dots,\xt_\ell}$ be the restriction of $\tmu_J$ to
$F_{\xt_1,\dots,\xt_\ell}$.

Fix some $(t_1,\dots,t_\ell)\in \ooi^\ell$,
and let
$\trho:F_{t_1,\dots,t_\ell}\to I^{m-\ell}$ be the map
$\trho(t_1,\dots,t_\ell,\xt_{\ell+1},\dots,\xt_m)
:=(\rho(\xt_{\ell+1}),\dots,\rho(\xt_m))$.
Let 
\begin{equation}\label{mauve}
\mu_{J;t_{j_1},\dots,t_{j_\ell}}:=
\trhox(\tmu_{J;t_{j_1},\dots,t_{j_\ell}})  
\end{equation}
be the signed measure on $I^{m-\ell}$ induced by this map,
noting that $\mujt=0$ unless
$(t_1,\dots,t_\ell)\in A_1\times\dots\times A_\ell$.

If $i\in\set{\ell+1,\dots,m}$, then  $\Phi_i(\tmu_J)=\tmu_J$ by \eqref{muJ}, 
and thus
\eqref{b4} implies that for any bounded Baire measurable function $f$ on
$F_{t_1,\dots,t_\ell}$, recalling \eqref{phii}, 
that
$\phi(\xt)=\iota(\rho(\xt))=\rho(\xt)$ for all $\xt\in\hI$ by \eqref{phi},
and \eqref{mauve},
\begin{align}\label{green}
\int_{F_{t_1,\dots,t_\ell}}&f \dd\tmu_J 
=\int_{\hI^m}\bigpar{f\etta_{F_{t_1,\dots,t_\ell}}}
\circ\phi_{\ell+1}\circ\dotsm\circ\phi_m \dd\tmu_J
\notag\\&
=\int_{F_{t_1,\dots,t_\ell}}
  f\bigpar{t_1,\dots,t_\ell,\rho(\xt_{\ell+1}),\dots,\rho(\xt_m)}
\dd\tmu_J
\notag\\&
=\int_{I^{m-\ell}}f(t_1,\dots,t_\ell,t_{\ell+1},\dots,t_m)
\dd\mujt(t_{\ell+1},\dots,t_m).  
\end{align}
Consider now any sequence $(\xt_i)_1^\ell$ with $\xt_i\in\set{t_i,t_i-}$,
$i=1,\dots,\ell$,  and let $q:=|\set{i:\xt_i=t_i-}|$.
For $i\le\ell$, \eqref{muJ} implies $\Psi_i(\tmu_J)=\tmu_J$,
and thus by \eqref{Psi+-} and \eqref{green}, for any 
bounded Baire measurable function $f$ on $F_{\xt_1,\dots,\xt_\ell}$,
writing $\vt'=(t_{\ell+1},\dots,t_m)$,
\begin{equation}\label{black}
  \begin{split}
\int_{F_{\xt_1,\dots,\xt_\ell}} f\dd\tmu_J
= (-1)^q \int_{I^{m-\ell}} f(\xt_1,\dots,\xt_\ell,\vt')    
\dd\mujt(\vt')
  \end{split}
.\end{equation}

Let 
$E_{t_1,\dots,t_\ell}:=\wset{t_1}\times\dots\times\wset{t_\ell}\times\hI^{m-\ell}$.
Summing \eqref{black} for the $2^\ell$ choices of $(\xt_1,\dots,\xt_\ell)$,
we obtain, recalling \eqref{gDJ},
\begin{equation}\label{blue}
  \begin{split}
\int_{E_{t_1,\dots,t_\ell}} f\dd\tmu_J
= \int_{I^{m-\ell}}\gD_J f(t_1,\dots,t_\ell,\vt') \dd\mujt(\vt').
  \end{split}
\end{equation}
Furthermore, recalling that $\tmu_J$ is supported on \eqref{white},
\begin{equation}
\chi_J(f)=
  \int_{\hI^m} f\dd\tmu_J 
= \sum_{(t_1,\dots,t_\ell)\in A_1\times\dots\times A_\ell} 
\int_{E_{t_1,\dots,t_\ell}} f\dd\tmu_J,
\end{equation}
and \eqref{d1b} follows by
summing \eqref{blue} 
over all $(t_1,\dots,t_\ell)\in A_1\times\dots\times A_\ell$.

Next,
the first inequality in \eqref{d1d} follows from \eqref{d1a}--\eqref{d1c},
noting $\norm{\gD_J f}\le 2^{|J|}\norm{f}\le 2^m\norm{f}$.
Furthermore,
$\norm{\mu_J}\le\norm{\tmu_J}$ by \eqref{d1c} and \eqref{mauve}, and
$\norm{\tmu_J}\le 2^{|J|}\norm{\mu}$ by \eqref{muJ} and
\eqref{normPhi}--\eqref{normPsi}.
Hence, the second inequality in \eqref{d1d} follows, noting that
$\sum_J2^{|J|}=3^m$ by the binomial theorem.

The converse, that every family satisfying \eqref{d1c} defines a continuous
linear functional on $\doim$ by \eqref{d1a}--\eqref{d1b} is obvious.
Furthermore, it is easily seen that if $\chi$ is defined in this way, then 
with $\tmu_J$ defined by \eqref{muJ}, $\int f\dd\mu_J$ equals the summand
$\chi_J(f)$ given by \eqref{d1b}, since the contribution from each
$\chi_{J'}$ with $J'\neq J$ vanishes by cancellations, 
and thus the construction above recovers
the measures $\mujt$ used to define $\chi$. In other words, the measures
$\mujt$ are uniquely determined by $\chi$.
\end{proof}

\section{Measurability and random variables in $\doim$}
\label{Smeas}

We equip $\doim$ with the \gsf{} $\cD=\cD_m$
generated by all point evaluations
$f\mapsto f(t)$, $t\in\oi^m$.
We sometimes mention this \gsf{}  explicitly for emphasis,
but even when no \gsf{} is mentioned, $\cD$ is implicitly assumed.

A $\doim$-valued \rv, or equivalently
a random element of $\doim$, is thus a function $X:\gO\to\doim$, defined on
some probability space $(\gO,\cF,P)$ such that for each fixed $t\in\oi^m$,
$X(t)$ is measurable (i.e., a random variable). 

Note that the norm $f\mapsto\norm{f}$ is a $\cD$-measurable function
$\doim\to\bbR$, since it suffices to take the supremum in \eqref{norm} over
rational $t$. Hence, if $X$ is a  $\doim$-valued \rv,
then $\norm X$ is measurable, \ie, a random variable.

\begin{remark}\label{RcD}
  $\cD$ is \emph{not} equal to the Borel \gsf{} defined by the norm
  topology on $\doim$, see \eg{} \cite[Example 2.5]{SJ271}.
The same example shows also that $\cD$ is strictly weaker than 
Borel \gsf{} defined by the weak topology. (We omit the details.)

However, in the positive direction, 
\refC{CM2} below shows that $\cD$ coincides with the \gsf{} generated by the
continuous linear functionals.

As a consequence (or directly), if we identify $\doim$ and $C(\hI^m)$ as
usual, then $\cD$ is also generated by all point evaluations
$f\mapsto f(\xt)$, $\xt\in\hI^m$.

Moreover, $\cD$ coincides also with the Borel \gsf{} 
defined by the Skorohod topology on $\doim$,
see \cite{Neuhaus}.
\end{remark}

\begin{theorem}
  \label{TM}
Let $m\ge1$ and $\ell\ge1$.
Every bounded multilinear form 
$\gY:\bigpar{\doim}^\ell\to\bbR$ is measurable.
\end{theorem}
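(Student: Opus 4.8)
The plan is to induct on $\ell$, showing that every bounded $\ell$-linear form on $\doim$ is measurable for the product \gsf{} $\cD\tpl$ on $(\doim)^\ell$. For the base case $\ell=1$ I would read off measurability directly from \refT{Tdual}: a bounded linear functional $\chi$ is a \emph{finite} sum of terms $\chi_J$, each a countable sum (over jump locations) of integrals, against finite measures, of the finite signed combination of point evaluations $\gD_J f$. Since each evaluation $f\mapsto f(\xt)$ is $\cD$-measurable and the operations involved (finite combinations, integration against a fixed measure, countable summation) preserve measurability, $\chi$ is $\cD$-measurable. The one delicate point already here is the measurability of $f\mapsto\int\gD_J f\dd\mu$; I would deduce it from the joint measurability of the evaluation map $(f,\xt)\mapsto f(\xt)$ on $\doim\times\hI^m$, which follows from right-continuity by writing $f(\xt)$ as the pointwise limit of the values of $f$ at dyadic points approaching $\xt$ from the side appropriate to the type of each coordinate.

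For the inductive step, suppose the claim holds for $\ell-1$ and let $\gY$ be a bounded $\ell$-linear form. I would approximate the last argument by a measurably chosen step function: for each $k$ there is a (nonlinear) map $R_k:\doim\to\doim$ sending $f$ to a finite linear combination $\sum_p f(\tau_p)\,\etta_{C_p}$ of indicators of boxes $C_p\subseteq I^m$, whose corners are the large coordinatewise jumps of $f$ refined by a dyadic grid, arranged so that $\norm{R_kf-f}\to0$ for every $f$ (this is the standard norm-density of box step functions in $\doim=C(\hI^m)$). The jump locations, and hence the corners $\tau_p=\tau_p(f)$ and the boxes $C_p=C_p(f)$, depend measurably on $f$ through the jump operators $\gD_i$, since $\gD_i f\in c_0$. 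Using the linearity of $\gY$ in its last argument,
\begin{equation*}
\gY(f_1,\dots,f_{\ell-1},R_kf_\ell)=\sum_p f_\ell\bigpar{\tau_p(f_\ell)}\,\gY\bigpar{f_1,\dots,f_{\ell-1},\etta_{C_p(f_\ell)}},
\end{equation*}
a finite sum whose terms depend measurably on $f_\ell$; and since $R_kf_\ell\to f_\ell$ in norm and $\gY$ is bounded, the left-hand side converges to $\gY(f_1,\dots,f_\ell)$ for every fixed tuple. It therefore suffices to prove that each summand is measurable in $(f_1,\dots,f_\ell)$.

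The heart of the matter is the joint measurability of $\gY(f_1,\dots,f_{\ell-1},\etta_C)$ in the Banach arguments together with the corner coordinates of the box $C$. For a fixed $C$ this is a bounded $(\ell-1)$-linear form in $(f_1,\dots,f_{\ell-1})$, hence measurable by the induction hypothesis; and for fixed functions, the dependence on the corners of $C$ is regular, because $h\mapsto\gY(f_1,\dots,f_{\ell-1},h)$ is a bounded linear functional and $\etta_C$ is a product of indicators of half-open intervals, so the value is a Stieltjes-type distribution function of the corners, of bounded variation and right-continuous in each. Measurability in the Banach variables together with this regularity in the real corner-parameters yields joint measurability, by approximating each corner from the appropriate side by dyadic rationals and passing to the limit. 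Composing with the measurable maps $f_\ell\mapsto\tau_p(f_\ell)$ and $f_\ell\mapsto C_p(f_\ell)$, and using once more that $f_\ell\mapsto f_\ell(\tau_p(f_\ell))$ is measurable, shows each summand is measurable, completing the induction.

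I expect the main obstacle to be exactly this passage from separate to joint measurability. The space $\doim$ is non-separable, so there is no fixed sequence of finite-rank projections converging strongly to the identity, and $\gY$ cannot simply be exhibited as a pointwise limit of forms built from a fixed countable family of functionals; this is why the approximating map $R_k$ must be nonlinear and adapted to the jumps of the argument. What rescues the argument is that, after peeling off one variable, the only genuinely continuous dependence remaining is on the finitely many real corner-parameters of the boxes, where regularity is available and dyadic approximation converts it into joint measurability. The remaining work is bookkeeping: constructing $R_k$ and verifying its measurable dependence on $f$ in the several-variable setting, where the box partition must be adapted to the coordinatewise jumps, and checking the Stieltjes regularity in all corner-coordinates simultaneously.
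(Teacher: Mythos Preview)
Your inductive step has a genuine gap at precisely the point you flag as delicate. The claim that, for a fixed bounded linear functional $\chi = \gY(f_1,\dots,f_{\ell-1},\cdot)$, the map $a\mapsto\chi(\etta_{[a,1]})$ is right-continuous is false: take $m=1$ and $\chi(g)=\gD g(\tfrac12)=g(\tfrac12)-g(\tfrac12-)$; then $\chi(\etta_{[a,1]})=\ett{a=\tfrac12}$, which fails right-continuity at $a=\tfrac12$. Without this regularity your dyadic approximation of the corners does not converge, and you are left with only separate measurability --- in $(f_1,\dots,f_{\ell-1})$ for fixed corners, and in the corners for fixed $(f_1,\dots,f_{\ell-1})$ --- which does not yield joint measurability. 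There is also a smaller error in the base case: the evaluation map $(f,\xt)\mapsto f(\xt)$ is \emph{not} jointly measurable on $\doim\times\hI^m$, as the paper notes explicitly in \refS{SFubini}. The base case can be repaired, since the measures in \refT{Tdual} live on $I^{m-\ell}$ rather than $\hI^{m-\ell}$ and the fixed $\hI$-coordinates can be handled by sequential limits, but it needs more care than you indicate.

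The paper avoids the joint-measurability obstacle by a different route: rather than induct on $\ell$, it reduces to the known case $m=1$ (\cite[Theorem 9.19]{SJ271}). The key is \refL{LM1}, which shows that every $f\in\doim$ can be approximated in norm by finite sums $\sum_k\bigotimes_{i=1}^m f_{N,k,i}$ with each $f_{N,k,i}\in\doi$ depending measurably on $f$. Applying this to all $\ell$ arguments simultaneously expresses $\gY(f^1,\dots,f^\ell)$ as a limit of finite sums of values of an induced bounded $\ell m$-linear form $\tgY$ on $\doi$, evaluated at tuples of $\doi$-functions depending measurably on $(f^1,\dots,f^\ell)$; measurability then follows from the $m=1$ result for $\tgY$. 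Here no real corner-parameters arise: the approximants are already $\doi$-valued measurable maps, and the black-box $m=1$ theorem absorbs the difficulty you encountered.
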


For $m=1$, this is \cite[Theorem 9.19]{SJ271}.
Instead of trying to generalize the proof in \cite{SJ271}, we proceed
through a different route, using the known case $m=1$
and \refL{LM1} below (proved using several preliminary  lemmas).

We first record the important special case $\ell=1$; for $m=1$ this was
proved by \citet{Pestman}.

\begin{corollary}\label{CM1}
  Every continuous linear functional on $\doim$ is measurable.
\nopf
\end{corollary}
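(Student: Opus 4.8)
The plan is to derive \refC{CM1} directly from \refT{Tdual}, which gives an explicit description of every continuous linear functional $\chi$ on $\doim$. By that theorem, we may write
\begin{equation*}
\chi(f)=\sum_{J\subseteq\mm}\chi_J(f),
\end{equation*}
a finite sum over the $2^m$ subsets $J\subseteq\mm$, where each $\chi_J$ is given by \eqref{d1b}. Since a finite sum of measurable functions is measurable, it suffices to show that each individual term $f\mapsto\chi_J(f)$ is a $\cD$-measurable function on $\doim$.

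Next I would examine the structure of $\chi_J(f)$ in \eqref{d1b}. The integrand is $\gD_J f(t_1,\dots,t_m)$, which by \eqref{gDi}--\eqref{gDJ} is a finite signed combination (over the $2^{|J|}$ choices of replacing $t_{j}$ by $t_{j}-$ for $j\in J$) of values $f(\xt_1,\dots,\xt_m)$ with each $\xt_i\in\set{t_i,t_i-}\subset\hI$. Under the identification $\doim=C(\hI^m)$, each such value is a point evaluation $f\mapsto f(\xt)$, $\xt\in\hI^m$; by the version of \refR{RcD} already available, or more basically because $f(t_i-)=\lim_{s\upto t_i}f(s)$ is a pointwise limit of the rational point evaluations $f\mapsto f(s)$ that generate $\cD$, each such evaluation is $\cD$-measurable. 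Thus for each fixed $(t_1,\dots,t_m)$ the map $f\mapsto\gD_J f(t_1,\dots,t_m)$ is $\cD$-measurable.

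It remains to pass from fixed-point measurability to measurability of the integrated-and-summed expression \eqref{d1b}. Here I would argue as follows. For each fixed $(t_{j_1},\dots,t_{j_\ell})$, the inner integral $\int_{I^{m-\ell}}\gD_J f\dd\mu_{J;t_{j_1},\dots,t_{j_\ell}}$ is, for a \emph{finite} signed measure against a bounded measurable integrand that depends measurably on $f$, itself $\cD$-measurable: one approximates the integral by finite Riemann-type sums over a countable dense set of evaluation points (using that the extension of $f$ to $\hI^{m-\ell}$ is continuous, so the integral against a Baire measure is a limit of finite sums of point evaluations), and a pointwise limit of $\cD$-measurable functions is $\cD$-measurable. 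Finally, the outer sum in \eqref{d1b} is a countable sum (by the remark after the theorem, only countably many $\mu_{J;t_{j_1},\dots,t_{j_\ell}}$ are nonzero), absolutely convergent uniformly in $f$ on bounded sets by \eqref{d1c} since $\norm{\gD_J f}\le2^m\norm f$; so $\chi_J$ is a uniform (on norm-bounded sets) limit of partial sums of $\cD$-measurable functions, hence $\cD$-measurable.

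I expect the main obstacle to be the measurability of the integral against $\mu_{J;t_{j_1},\dots,t_{j_\ell}}$ as a function of $f$, \ie{} justifying that $f\mapsto\int g_f\dd\nu$ is $\cD$-measurable when $g_f$ is built from $f$ by point evaluations. The clean way to handle this is to note that the integration map against a fixed finite Baire measure $\nu$ on $\hI^{m-\ell}$ is itself a continuous linear functional on $C(\hI^{m-\ell})=\doix{m-\ell}$, and then to invoke the $m{-}\ell$ case of the corollary inductively (base case $m-\ell=0$ being trivial, as the integral is then just a scalar multiple of a single evaluation). Since the multilinear statement \refT{TM} is exactly what the corollary feeds into, I would be careful to phrase the induction so as not to create a circularity; the induction is on the ambient dimension $m-\ell$ and uses only \refT{Tdual}, which is already established, so this is legitimate.
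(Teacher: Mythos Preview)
Your route is genuinely different from the paper's. There, \refC{CM1} is simply the special case $\ell=1$ of \refT{TM} and carries no separate proof; the work is all in \refT{TM}, which goes through the approximation lemma \refL{LM1} and a reduction to the known one-dimensional case. You instead bypass \refT{TM} entirely and read measurability off the explicit dual-space description \refT{Tdual}. Most of your argument is sound: point evaluations at $\xt\in\hI^m$ are $\cD$-measurable by the limit argument you give, the outer countable sum poses no problem, and the restriction $f\mapsto\gD_J f(t',\cdot)$ is measurable from $(\doim,\cD_m)$ to $(\doiml,\cD_{m-\ell})$.

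There is, however, a real gap in your induction. You induct on the dimension $m-\ell$ of the space carrying the inner integral, with base case $m-\ell=0$. But the term $J=\emptyset$ has $\ell=0$ and hence $m-\ell=m$: the inner integral \eqref{d1b0} is $\int_{I^m}f\dd\mu_\emptyset$, a continuous linear functional on $\doim$ itself, and invoking the corollary in dimension $m$ is exactly what you are trying to prove. Reapplying \refT{Tdual} to $\chi_\emptyset$ just returns $\chi_\emptyset$, so there is no reduction. Your Riemann-sum alternative is too vague to carry this case either, since $\mu_\emptyset$ is an arbitrary signed Borel measure with no grid structure to exploit.

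The fix is to drop the induction and handle all inner integrals via joint measurability: by \refL{LM0} (together with limits in the $J$-coordinates to reach points $t_j-$), the map $(f,u)\mapsto\gD_J f(t',u)$ is jointly measurable on $\doim\times I^{m-\ell}$ for each fixed $t'$, and Fubini's theorem then gives that $f\mapsto\int_{I^{m-\ell}}\gD_J f(t',u)\dd\mu_{J;t'}(u)$ is $\cD$-measurable. With this repair your argument goes through and yields a proof of \refC{CM1} independent of \refT{TM}---a cleaner route for the linear case, though one that does not extend to the multilinear \refT{TM}.
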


\begin{corollary}\label{CM2}
  The \gsf{} $\cD$ on $\doim$ coincides with the \gsf{} $\cB_w$ generated
  by the continuous linear functionals.
\end{corollary}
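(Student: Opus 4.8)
The plan is to establish the two inclusions $\cD\subseteq\bw$ and $\bw\subseteq\cD$ separately; together they give the asserted equality. The substantive work has already been carried out in \refC{CM1} (itself the case $\ell=1$ of \refT{TM}), so the present statement should follow by a short formal argument, and I do not anticipate any real obstacle here.

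For the inclusion $\cD\subseteq\bw$, I would observe that each generator of $\cD$ is already a continuous linear functional. Indeed, for a fixed $t\in\oi^m$ the point evaluation $f\mapsto f(t)$ is linear, and it is bounded since $|f(t)|\le\norm f$ by \eqref{norm}; hence it is continuous, \ie, an element of $(\doim)^*$. Thus the family of functionals generating $\cD$ is contained in the family generating $\bw$, and the inclusion $\cD\subseteq\bw$ of the induced $\gs$-fields is immediate from the definition of a generated $\gs$-field.

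For the reverse inclusion $\bw\subseteq\cD$, I would invoke \refC{CM1}, which asserts precisely that every continuous linear functional on $\doim$ is $\cD$-measurable. Consequently each of the generators of $\bw$ is already $\cD$-measurable, and therefore the smallest $\gs$-field making all continuous linear functionals measurable satisfies $\bw\subseteq\cD$. Combining the two inclusions yields $\cD=\bw$, which is the claim. The entire content of the corollary thus reduces to the nontrivial measurability result \refC{CM1}, together with the elementary boundedness estimate $|f(t)|\le\norm f$ for point evaluations.
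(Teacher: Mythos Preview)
Your proof is correct and follows essentially the same approach as the paper's own proof, which is equally brief: \refC{CM1} gives $\bw\subseteq\cD$, and the converse holds because every point evaluation is a continuous linear functional. You have simply spelled out the boundedness $|f(t)|\le\norm f$ explicitly, which the paper leaves implicit.
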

\begin{proof}
  \refC{CM1} implies that $\cB_w\subseteq \cD$. The converse follows because
  every point evaluation is a continuous linear functional.
\end{proof}

 We also rephrase this in terms of $\doim$-valued random
variables.
A function $X$ from a measure space into a Banach space $B$ is
\emph{weakly measurable} if $\innprod{\chi,X}$ is
measurable for every $\chi\in B^*$.

\begin{corollary}\label{CM3}
   If $X:\gO\to\doim$ is a function defined on some probability space
$(\gO,\cF,P)$, then $X$ is $\cD$-measurable 
(i.e., a random variable in $\doim$) if
and only if it is weakly measurable.
\nopf
\end{corollary}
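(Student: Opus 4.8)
The plan is to prove \refC{CM3} as a direct consequence of \refC{CM2}, so the real work is already done once the coincidence of the two $\sigma$-fields is in hand. The statement to establish is the equivalence: $X:\gO\to\doim$ is $\cD$-measurable if and only if $X$ is weakly measurable, \ie{} $\innprod{\chi,X}$ is measurable for every $\chi\in(\doim)^*$.

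First I would observe that $X$ being $\cD$-measurable means exactly that $X\qw(\cD)\subseteq\cF$, where $\cD$ is generated by the point evaluations, and that $X$ being weakly measurable means $X\qw(\cB_w)\subseteq\cF$, where $\cB_w$ is the $\sigma$-field generated by the continuous linear functionals. In other words, the two notions of measurability are measurability with respect to the two $\sigma$-fields $\cD$ and $\cB_w$ on $\doim$. Since a function is measurable into $(\doim,\cG)$ for a $\sigma$-field $\cG$ generated by a family of maps precisely when each composition with a generating map is measurable, weak measurability says each $\chi\circ X$ is measurable ($\chi\in(\doim)^*$), while $\cD$-measurability says each point evaluation $f\mapsto f(t)$ composed with $X$ (that is, $t\mapsto X(t)$ for fixed $t$) is measurable.

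The key step is then simply to invoke \refC{CM2}, which states $\cD=\cB_w$ as $\sigma$-fields on $\doim$. Once these two $\sigma$-fields are literally equal, measurability of $X$ with respect to one is the same as measurability with respect to the other, and the two characterizations of measurability of $X$ coincide. Concretely: $X$ is weakly measurable iff $X\qw(\cB_w)\subseteq\cF$ iff $X\qw(\cD)\subseteq\cF$ iff $X$ is $\cD$-measurable. This is why the corollary carries a \verb|\nopf| in the statement — there is essentially nothing to prove beyond unwinding definitions and citing \refC{CM2}.

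I do not anticipate any genuine obstacle here, since all the substantive content lives in \refC{CM2} (hence ultimately in \refC{CM1} and \refT{TM}). The only point requiring a word of care is the standard fact that measurability into a space carrying a generated $\sigma$-field is equivalent to measurability of each composition with a generator; this is routine and requires no special property of $\doim$. Thus the proof is a two-line deduction.

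\begin{proof}
By definition, $X$ is $\cD$-measurable if and only if $X\qw(\cD)\subseteq\cF$, and weakly measurable if and only if $\innprod{\chi,X}=\chi\circ X$ is measurable for every $\chi\in(\doim)^*$, \ie{} if and only if $X\qw(\cB_w)\subseteq\cF$. By \refC{CM2}, $\cD=\cB_w$, so these two conditions coincide.
\end{proof}
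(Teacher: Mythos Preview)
Your proposal is correct and is exactly the intended argument: the paper marks this corollary with \verb|\nopf| because it follows immediately from \refC{CM2}, and your deduction via $\cD=\cB_w$ is precisely that.
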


We begin the proof of \refT{TM} by a simple observation. 
(See \cite[Lemma 9.12]{SJ271} for the case $m=1$.)
\begin{lemma}
  \label{LM0}
The evaluation map $(f,t_1,\dots,t_m)\mapsto f(t_1,\dots,t_m)$ is measurable
$\doim\times\oi^m\to\bbR$. 
\end{lemma}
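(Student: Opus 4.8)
The plan is to reduce the measurability of the evaluation map on $\doim\times\oi^m$ to the separate measurability of the two kinds of variation, namely the function argument $f$ and the point $t$, and then handle each by an approximation argument. First I would recall that for fixed $t\in\oi^m$ the map $f\mapsto f(t)$ is $\cD$-measurable by the very definition of $\cD$, and that for fixed $f\in\doim$ the map $t\mapsto f(t)$ is a Borel-measurable (indeed Lebesgue-measurable) function on $\oi^m$, since $f$ is right-continuous with left limits and hence has only countably many discontinuities in a suitable sense, making it a pointwise limit of simple functions. Joint measurability, however, does not follow automatically from separate measurability, so the approximation must be done uniformly enough to pass to the joint level.

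The cleanest route is to approximate a general $f$ by functions that are constant on a fixed grid. For a positive integer $n$, partition $\oi^m$ into the $n^m$ dyadic (or uniform) sub-boxes indexed by $k=(k_1,\dots,k_m)$, and define the operator $S_n:\doim\to\doim$ by setting $(S_nf)(t)$ equal to the value of $f$ at the lower-right corner of the sub-box containing $t$, i.e. at the point with coordinates $\ceil{n t_i}/n$ (with the right-continuity convention). Then $(f,t)\mapsto (S_nf)(t)$ is a \emph{simple} function in the combined sense: it takes, on each grid cell in the $t$-variable, the value $f(c)$ for a fixed grid corner $c$, and $f\mapsto f(c)$ is $\cD$-measurable while the cell containing $t$ is a Borel function of $t$. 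Writing this out, $(S_nf)(t)=\sum_k f(c_k)\,\ett{t\in Q_k}$ is a finite sum of products of a $\cD$-measurable function of $f$ and a Borel function of $t$, hence measurable on $\doim\times\oi^m$.

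It remains to show $(S_nf)(t)\to f(t)$ as $n\to\infty$ for every fixed $(f,t)$, because then the evaluation map is a pointwise limit of measurable functions and therefore measurable. This convergence is exactly the statement that right-continuity along the chosen octant gives $f(c_k)\to f(t)$ as the grid corner $c_k$ approaches $t$ from the $[t_i,1]$ directions; one must take care that the corner is chosen in the half-open box so that it approaches $t$ from the correct side, which is precisely the octant $J_1\times\dots\times J_m$ with $J_i=[t_i,1]$ appearing in the definition of $\doim$ in \refS{S:intro}. For $t$ on the right boundary $t_i=1$ the convention $\ceil{n t_i}/n=1$ keeps the corner in range, and there the relevant limit is again covered by the definition.

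The main obstacle I anticipate is bookkeeping at the boundary and the choice of corner: I must pick the approximating grid point so that convergence to $f(t)$ is guaranteed by the \emph{right}-continuity built into $\doim$ rather than some limit that the definition excludes (recall the asymmetry, with $\ge$ on one side and $<$ on the other). Getting the rounding convention consistent across all $m$ coordinates, so that $c_k\downarrow t$ in every coordinate simultaneously and the point $f(c_k)$ genuinely tends to $f(t)=f(t+,\dots,t+)$, is the one place where care is needed; everything else is the routine observation that finite sums and pointwise limits of measurable functions are measurable. An alternative phrasing, should the corner approach be awkward, is to invoke the isometry $\doim\cong\chIm$ and approximate the continuous extension on the compact space $\hI^m$, but the grid approximation above is the most direct.
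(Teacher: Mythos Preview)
Your proposal is correct and takes essentially the same approach as the paper: the paper's proof is the one-line observation that $f(t_1,\dots,t_m)=\lim_{n\to\infty}f\bigl(\ceil{nt_1}/n,\dots,\ceil{nt_m}/n\bigr)$ by right-continuity, with the right-hand side measurable for each fixed $n$. Your grid approximation via $S_n$ with corners at $\ceil{nt_i}/n$ is exactly this argument, only written out in more detail; the preliminary discussion of separate measurability and the alternative route through $C(\hI^m)$ are unnecessary but harmless.
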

\begin{proof}
By right-continuity,
  \begin{equation}
    f(t_1,\dots,t_m)
=
\lim_{\ntoo}
f\biggpar{\frac{\ceil{nt_1}}n,\dots,\frac{\ceil{nt_m}}n},
  \end{equation}
where the function on the \rhs{} 
is measurable for each fixed $n$.
\end{proof}

In the next lemmas we fix $f\in\doim$ and consider differences along
one coordinate only; for notational convenience we consider the first
coordinate and write $t=(t_1,t')$ with $t_1\in\oi$ and $t'\in\oi^{m-1}$.
Furthermore, to avoid some trivial modifications at the endpoints 0 and 1, we
extend $f$ by defining $f(t_1,t'):=f(0,t')$ for $t_1<0$
and 
$f(t_1,t'):=f(1,t')$ for $t_1>1$.

We define, recalling \eqref{gDi}.
\begin{equation}\label{gdxf}
  \gdxf(t_1):=\sup_{t'\in\oi^{m-1}}\bigabs{\gD_1f(t_1,t')},
\qquad t_1\in\oi,
\end{equation}
and, for an interval $J$,
\begin{equation}\label{glf}
  \glf(J):=\sup\bigset{|f(t_1,t')-f(u_1,t')|:t_1,u_1\in J,\,t'\in\oimi}.
\end{equation}

\begin{lemma}
\label{LM2}  
For every $t_1\in\oi$,
$f(s_1,t')\to f(t_1,t')$ as $s_1\downto t_1$ and $f(s_1,t')\to f(t_1-,t')$
as $s_1\upto t_1$, 
uniformly for all $t'\in\oi^{m-1}$.

In other words, $\glf([t_1,t_1+\gd])\to0$ and $\glf([t_1-\gd,t_1))\to0$ as
$\gd\to0$. 
\end{lemma}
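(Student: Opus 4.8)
The plan is to pass to the continuous extension of $f$ to the compact space $\hI^m$, which I will again denote $f$ under the identification $\doim=\chIm$ from \refSS{SSsplit}. For $s_1\in\oi$ and $t'\in\oimi$ the value $f(s_1,t')$ is then $f$ evaluated at the right copies $\iota(s_1),\iota(t')$, while $f(t_1-,t')$ is $f$ evaluated at the point whose first coordinate is the left copy $t_1-\in\hI$ and whose remaining coordinates are $\iota(t')$; indeed, for fixed $t'$ the section $\xt_1\mapsto f(\xt_1,\iota(t'))$ is continuous on $\hI$, hence lies in $\doi$, so its left limit at $t_1$ exists and equals $f(t_1-,t')$. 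Since the two convergence assertions are equivalent to the reformulation (as the statement says), I would prove the reformulation: $\glf([t_1,t_1+\gd])\to0$ and $\glf([t_1-\gd,t_1))\to0$ as $\gd\downto0$.

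Consider first $\glf([t_1,t_1+\gd])$. Since $\gd\mapsto\glf([t_1,t_1+\gd])$ is non-decreasing, the limit $L:=\lim_{\gd\downto0}\glf([t_1,t_1+\gd])\ge0$ exists (it is the infimum), and I would argue by contradiction that $L=0$. If $L>0$, then taking $\gd=1/n$ produces points $s_n,u_n\in[t_1,t_1+1/n]$ and $t'_n\in\oimi$ with $\bigabs{f(s_n,t'_n)-f(u_n,t'_n)}>L/2$. The key point is that $\hI^{m-1}$ is compact and first countable (a finite product of copies of the compact, first countable space $\hI$), hence sequentially compact; passing to a subsequence I may assume $\iota(t'_n)\to\tau$ in $\hI^{m-1}$. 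Meanwhile $s_n,u_n\downto t_1$ forces $\iota(s_n)\to t_1$ and $\iota(u_n)\to t_1$ in $\hI$ (convergence to the right copy $t_1$ in the order topology). Therefore both $(\iota(s_n),\iota(t'_n))$ and $(\iota(u_n),\iota(t'_n))$ converge to $(t_1,\tau)$ in $\hI^m$, and continuity of $f$ gives $f(s_n,t'_n)\to f(t_1,\tau)$ and $f(u_n,t'_n)\to f(t_1,\tau)$, so their difference tends to $0$, contradicting the bound $L/2$. Hence $L=0$.

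The statement for $\glf([t_1-\gd,t_1))$ is entirely analogous, the only change being that now $s_n,u_n\upto t_1$ with $s_n,u_n<t_1$, so $\iota(s_n)$ and $\iota(u_n)$ converge to the \emph{left} copy $t_1-$ in $\hI$; the same sequential-compactness extraction and continuity argument then forces the difference to $0$. The boundary cases are immediate: for the right-hand statement at $t_1=1$ and the left-hand statement at $t_1=0$, the extension of $f$ outside $[0,1]$ makes $f(\cdot,t')$ constant on the relevant interval, so the corresponding $\glf$ vanishes identically.

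I expect the main obstacle to be purely topological: because $\hI$ is not metrizable one cannot run a naive $\eps$--$\gd$ argument, and the whole proof hinges on (i) extracting a convergent subsequence of $(t'_n)$, which is why sequential compactness of $\hI^{m-1}$ (via first countability) is needed rather than mere compactness, and (ii) checking that one-sided convergence of reals translates into convergence to the correct (right, resp.\ left) copy in the order topology of $\hI$. Once these two points are secured, the contradiction follows at once from continuity of $f$ on the compact space $\hI^m$.
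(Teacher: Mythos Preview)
Your proof is correct, but it takes a genuinely different route from the paper's. The paper argues directly on $\oi^m$: for a given $\eps>0$ and each point $(t_1,t')$, the octant-limit definition of $\doim$ yields a small ball on which $f$ varies by at most $\eps/2$ within each relevant half; then ordinary metric compactness of $\{t_1\}\times\oimi$ produces a finite cover and a Lebesgue-number $\gd$ that works uniformly in $t'$. Your argument instead exploits the identification $\doim=\chIm$ and runs a sequential-compactness contradiction in $\hI^{m-1}$, using that $\hI$ (and hence $\hI^{m-1}$) is compact and first countable. The paper's approach is more elementary in that it never leaves $\oi^m$ and uses only the raw definition of $\doim$; yours is shorter and conceptually cleaner once the split-interval machinery is in place, since ``uniform one-sided continuity'' simply becomes ordinary continuity of $f$ on the compact $\hI^m$. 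Both are perfectly valid; your version nicely illustrates why the $\chIm$ viewpoint is worth setting up in the first place.
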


\begin{proof}
A standard compactness argument.
  Let $\eps>0$.
  By the definition of \doim, for every $t=(t_1,t')\in\oi^m$, there
  exists an open 
  ball $B_t=B(t,\gd_t)$ centred at $t$ such that if $s\in B_t$, then $f(s)$
  differs by 
  at most $\eps/4$ from the limit as $s\to t$ in the corresponding octant.
It follows that if $(s_1,t')\in B_t$, then 
$|f(s_1,t')-f(t_1-,t')|\le\eps/2$ if $s_1<t_1$
and $|f(s_1,t')-f(t_1,t')|\le\eps/2$ if $s_1\ge t_1$.

Fix $t_1$.
By compactness, there exists a finite set 
$\set{\ttx 1,\dots\ttx N}$ 
such that the
corresponding balls $B_{(t_1,\ttx j)}$ cover $\set{t_1}\times\oimi$, and
furthermore, 
there exists 
$\gd>0$ such that for every $t'\in\oimi$, the ball $B((t_1,t'),\gd)$ is
contained in some $B_{(t_1,\ttx j)}$.
It follows that for any $t'\in\oimi$, if $s\in(t-\gd,t)$, then
$|f(s_1,t')-f(t_1-,t')|\le\eps/2$,
and if $s_1\in[t_1,t_1+\gd)$, then 
$|f(s_1,t')-f(t_1,t')|\le\eps/2$.
\end{proof}

Fix $\eps>0$ and let
\begin{equation}\label{xife}
  \xife:=\set{t\in\oi:\gdxf(t)\ge\eps}.
\end{equation}

\begin{lemma}\label{LM3}
The set $\xife$ is finite for every $f\in\doim$ and $\eps>0$.
\end{lemma}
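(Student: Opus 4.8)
The plan is a standard compactness argument. Since $\xife\subseteq\oi$ and $\oi$ is compact, it suffices to show that $\xife$ has no accumulation point in $\oi$, because an infinite subset of a compact metric space necessarily has one. So I would argue by contradiction: suppose $t_0\in\oi$ is an accumulation point of $\xife$. Then there is a sequence of distinct points $s_n\in\xife$ with $s_n\to t_0$, and after passing to a subsequence I may assume either $s_n\downto t_0$ with $s_n>t_0$ for all $n$, or $s_n\upto t_0$ with $s_n<t_0$ for all $n$ (by pigeonhole, infinitely many of the distinct points lie strictly on one side of $t_0$).

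Consider first the case $s_n\downto t_0$. By \refL{LM2}, $\glf([t_0,t_0+\gd])\to0$ as $\gd\to0$, so I can fix $\gd>0$ with $\glf([t_0,t_0+\gd])<\eps/2$; then $t_0<s_n\le t_0+\gd$ for all large $n$. Since $\gdxf(s_n)\ge\eps$, there is a point $t'_n\in\oimi$ with $|\gD_1f(s_n,t'_n)|=|f(s_n,t'_n)-f(s_n-,t'_n)|\ge\eps/2$. The idea is to bound this jump by the oscillation on $[t_0,t_0+\gd]$: writing $f(s_n-,t'_n)=\lim_{u\upto s_n}f(u,t'_n)$ and taking $u\in(t_0,s_n)$, both $u$ and $s_n$ lie in $[t_0,t_0+\gd]$, so $|f(s_n,t'_n)-f(u,t'_n)|\le\glf([t_0,t_0+\gd])$; letting $u\upto s_n$ gives $|f(s_n,t'_n)-f(s_n-,t'_n)|\le\glf([t_0,t_0+\gd])<\eps/2$, contradicting the choice of $t'_n$. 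The case $s_n\upto t_0$ is symmetric, using instead $\glf([t_0-\gd,t_0))\to0$ from \refL{LM2} and choosing $u\in(t_0-\gd,s_n)$ with $u<s_n$, so that $u,s_n\in[t_0-\gd,t_0)$.

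I expect the only delicate point to be the interaction between the supremum over $t'$ hidden in $\gdxf$ and the passage to the left limit $f(s_n-,t'_n)$: a priori the witnessing coordinate $t'_n$ varies with $n$, so a merely pointwise-in-$t'$ statement would not suffice to control the jumps simultaneously. This is exactly where the \emph{uniformity} in $t'$ provided by \refL{LM2} is essential, since $\glf$ bounds $|f(s_n,t'_n)-f(u,t'_n)|$ uniformly over all $t'_n\in\oimi$. Once each jump is controlled by the oscillation over a single small interval on which that oscillation is strictly below $\eps/2$, the contradiction is immediate, and the endpoint cases $t_0=0$ and $t_0=1$ are covered automatically, since then only one of the two one-sided cases can occur.
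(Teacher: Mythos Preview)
Your proof is correct. Both arguments are compactness arguments, but they are structured differently. The paper reaches back into the proof of \refL{LM2}: it reuses the balls $B_t\subset\oi^m$ constructed there, observes that inside each $B_t$ the jump $|\gD_1 f(s)|$ is at most $\eps/2$ whenever $s_1\neq t_1$, and then extracts a finite subcover of $\oi^m$ to conclude that $\xife$ is contained in the finite set of first coordinates of the centers. You instead treat \refL{LM2} as a black box and work entirely on the one-dimensional coordinate interval: assuming an accumulation point $t_0$ of $\xife$, you use the uniform oscillation bound $\glf\to0$ on a small one-sided interval around $t_0$ to force all nearby jumps below $\eps/2$, contradicting membership in $\xife$. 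Your route is a little longer but more modular, since it does not depend on the internal construction in the proof of \refL{LM2}; the paper's route is shorter but requires carrying the balls $B_t$ forward from that proof.
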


This lemma is essentially the same as \cite[Lemma 1.3]{Neuhaus}.

\begin{proof}
Let the balls $B_t$ be as in the proof of \refL{LM2}.
It follows that if $s\in B_t$ and $s_1\neq t_1$, then $|\gD_1 f(s)|\le\eps/2$.

By compactness, there exists a finite set $\set{t^1,\dots,t^N}$ such that the
corresponding balls $B_{t^j}$ cover $\oi^m$. It follows that
$\xife$ is a subset of the finite set $\set{t^j_1:j\le N}$.
\end{proof}

Say that an interval $J\subseteq\oi$ is \emph{fat} if $\glf(J)\ge\eps$ and
\emph{bad} if $J$ is fat and furthermore $J\cap\xife=\emptyset$.

\begin{lemma}
 \label{LM4}
For every $f\in\doim$ and $\eps>0$, there exists $\eta>0$ such that
if $J\subset\oi$ is an interval of length $|J|<\eta$, then $J$ is not bad.
\end{lemma}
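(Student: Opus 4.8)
The plan is to prove a local statement at each point of $\oi$ and then globalize by compactness. Fix $\eps>0$. I would use two facts already available: by \refL{LM3} the set $\xife$ is finite, and by \refL{LM2} the one-sided oscillations $\glf([c,c+\gd])$ and $\glf([c-\gd,c))$ tend to $0$ as $\gd\downto0$, for every $c\in\oi$. The local claim I aim for is: \emph{for each $c\in\oi$ there is $\gd_c>0$ such that every interval $J\subseteq(c-\gd_c,c+\gd_c)$ with $J\cap\xife=\emptyset$ satisfies $\glf(J)<\eps$}; such a $J$ is then not fat, hence not bad. Granting the claim, the intervals $(c-\gd_c,c+\gd_c)$, $c\in\oi$, form an open cover of the compact metric space $\oi$, so the Lebesgue number lemma supplies $\eta>0$ such that any interval $J$ with $|J|<\eta$ lies in some $(c-\gd_c,c+\gd_c)$. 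If moreover $J\cap\xife=\emptyset$ then $\glf(J)<\eps$, so $J$ is not bad; and if $J\cap\xife\neq\emptyset$ then $J$ is not bad by definition. Either way $J$ is not bad, as required.

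To establish the claim I would split according to whether $c\in\xife$. Suppose first $c\notin\xife$, so $\gdxf(c)<\eps$. For points $a<c\le b$ in an interval $J\subseteq(c-\gd,c+\gd)$ and $t'\in\oimi$, insert the value at $c$ and the left limit:
\begin{equation*}
f(b,t')-f(a,t')=\bigpar{f(b,t')-f(c,t')}+\gD_1 f(c,t')+\bigpar{f(c-,t')-f(a,t')}.
\end{equation*}
Here $|\gD_1 f(c,t')|\le\gdxf(c)$ by definition, while the two bracketed terms are bounded by $\glf([c,c+\gd])$ and $\glf([c-\gd,c))$ respectively (the latter since $f(a,t')\to f(c-,t')$ and the difference is a limit of differences within $[c-\gd,c)$). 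Pairs lying on one side of $c$ are controlled by a single one-sided term. Taking the supremum over $a,b\in J$ and $t'$ gives $\glf(J)\le\gdxf(c)+\glf([c,c+\gd])+\glf([c-\gd,c))$; since $\gdxf(c)<\eps$, \refL{LM2} lets me pick $\gd_c$ small enough to make this $<\eps$ for every $J\subseteq(c-\gd_c,c+\gd_c)$. If instead $c\in\xife$, I use finiteness of $\xife$ to choose $\gd_c$ so small that $(c-\gd_c,c+\gd_c)\cap\xife=\set{c}$ and, by \refL{LM2}, $\glf([c,c+\gd_c])<\eps$ and $\glf([c-\gd_c,c))<\eps$. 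Then any $J\subseteq(c-\gd_c,c+\gd_c)$ with $J\cap\xife=\emptyset$ omits $c$, hence lies entirely in $[c-\gd_c,c)$ or in $[c,c+\gd_c]$, so $\glf(J)<\eps$ by monotonicity of $\glf$.

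The one delicate point is the straddling case: an interval crossing $c$ absorbs the full jump $\gD_1 f(c,\cdot)$, whose supremum over $t'$ is $\gdxf(c)$. This is exactly why the hypothesis $J\cap\xife=\emptyset$ is needed. When $c$ is an interior crossing point it forces $\gdxf(c)<\eps$, so the jump contributes less than $\eps$; and at a genuine large-jump point $c\in\xife$ the same hypothesis pushes $J$ entirely to one side, where \refL{LM2} controls the oscillation. The remainder is routine bookkeeping with the triangle inequality and the uniform one-sided limits of \refL{LM2}.
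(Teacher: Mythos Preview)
Your proof is correct and follows essentially the same route as the paper: establish a local claim at each $c\in\oi$ (splitting on whether $c\in\xife$) and then globalize via Lebesgue's covering lemma. The only cosmetic differences are that the paper phrases the local claim directly as ``no $J\subseteq O_t$ is bad'' and, in the case $c\in\xife$, does not invoke the finiteness of $\xife$ to isolate $c$ --- your hypothesis $J\cap\xife=\emptyset$ already forces $c\notin J$, so that isolation step is harmless but unnecessary.
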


\begin{proof}
We claim that for every $t\in\oi$, there exists an open interval $O_t\ni t$
such that no  interval $J\subseteq O_t$ is bad.

In order to show this claim, suppose first that $t\notin\xife$. Then
$\gdxf(t)<\eps$, and it follows by \refL{LM2} that we can choose $\gd>0$
such that $\glf\bigpar{(t-\gd,t+\gd)}<\eps$. Hence, $O_t:=(t-\gd,t+\gd)$
contains no fat interval, and thus no bad interval.

On the other hand, if $t\in\xife$, we similarly see by \refL{LM2} that
we can choose $\gd>0$ such that $\glf\bigpar{(t-\gd,t)}<\eps$
and $\glf\bigpar{(t,t+\gd)}<\eps$.
Let $O_t:=(t-\gd,t+\gd)$.
Any interval $J\subseteq O_t$ either contains $t\in\xife$,
or it is a subset of $(t-\gd,t)$ or $(t,t+\gd)$ and then $J$ is not fat; in
both cases $J$ is not bad.

This proves the claim.
By a standard compactness argument (Lebesgue's covering lemma),
there exists $\eta>0$ such that every interval $J\subset\oi$
of length $|J|<\eta$ is contained  in some $O_t$, and thus not bad.
\end{proof}

\begin{lemma}
  \label{LM5}
Fix $\eps>0$, let $M:=|\xife|\ge0$ and write $\xife=\set{\xi_1,\dots,\xi_M}$
with $0<\xi_1<\dots<\xi_M\le1$. Let further $\xi_i:=0$ for $i>M$.
Then $M$ and all $\xi_i$, $i\ge1$, are measurable functionals of $f\in\doim$.
\end{lemma}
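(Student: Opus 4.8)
The plan is to reconstruct both $M$ and the locations $\xi_i$ from point evaluations by a dyadic counting scheme, using the oscillation functionals $\glf$ of short half-open intervals together with \refL{LM4}. The first ingredient I would record is that for any interval $J=(a,b]\subseteq\oi$ the oscillation $\glf(J)$ is a $\cD$-measurable functional of $f$. Indeed, since $f$ is right-continuous jointly in all coordinates (and $f(b,\cdot)\in\doix{m-1}$ takes care of the right endpoint), the supremum in \eqref{glf} is unchanged if $t_1,u_1$ are restricted to the countable set $(\bbQ\cap(a,b))\cup\set b$ and $t'$ to a fixed countable subset $Q\subset\oimi$ (rational coordinates together with the value $1$ in each coordinate); approximating both $f(t_1,t')$ and $f(u_1,t')$ from the upper-right octant with a \emph{common} $t'$-approximant gives the reduction. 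Thus $\glf(J)$ is a countable supremum of differences of point evaluations, so $\ett{\glf(J)\ge\eps}$ is measurable.

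Next I would establish the key dichotomy: for $J=(a,b]$ short enough, $J$ is fat if and only if $J\cap\xife\ne\emptyset$. One direction is exactly \refL{LM4}: if $|J|<\eta$ and $J$ is fat, then $J$ is not bad, hence $J\cap\xife\ne\emptyset$. For the converse (which needs no smallness, but does need the half-open form of $J$), suppose $t_1\in J\cap\xife$; then $t_1>a$ and $\gdxf(t_1)\ge\eps$, so for each $\eps'<\eps$ there is $t'\in\oimi$ with $|f(t_1,t')-f(t_1-,t')|>\eps'$. Since $u_1\upto t_1$ in $I$ corresponds to $u_1\to t_1-$ in $\hI$, continuity of the extension of $f$ to $\hI^m$ gives $f(u_1,t')\to f(t_1-,t')$; choosing $u_1\in(a,t_1)$ close enough (possible because $t_1>a$) yields $|f(t_1,t')-f(u_1,t')|>\eps'$, so $\glf(J)\ge\eps'$, and letting $\eps'\upto\eps$ shows $J$ is fat.

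With these in hand I would carry out the counting. For $k\ge1$ and $1\le j\le2^k$ set $J_{k,j}:=((j-1)2^{-k},j2^{-k}]$ and define the measurable functional $C_k(f,j):=\sum_{j'\le j}\ett{\glf(J_{k,j'})\ge\eps}$. Fix $f$ and pick $k_0$ so large that for all $k\ge k_0$ one has $2^{-k}<\eta$ and $2^{-k}$ is below the minimal gap between the finitely many (by \refL{LM3}) points of $\xife$. For such $k$, each $J_{k,j}$ meets $\xife$ in at most one point, and by the dichotomy $J_{k,j}$ is fat exactly when it contains a point of $\xife$; hence $C_k(f,j)=|\xife\cap(0,j2^{-k}]|$. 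In particular $M_k(f):=C_k(f,2^k)=M$ for all $k\ge k_0$, so $M=\lim_k M_k(f)$ is measurable. For the locations I would put $\xi_i^{(k)}(f):=2^{-k}\min\set{j\le2^k:C_k(f,j)\ge i}$, with the empty minimum interpreted so that $\xi_i^{(k)}(f):=0$ when no such $j$ exists; this is a measurable functional of $f$. For $k\ge k_0$ and $i\le M$ one has $C_k(f,j)\ge i\Longleftrightarrow j2^{-k}\ge\xi_i$, so the minimizing index is $\ceil{2^k\xi_i}$ and $\xi_i^{(k)}(f)=2^{-k}\ceil{2^k\xi_i}\to\xi_i$; for $i>M$ the defining set is empty for every $k\ge k_0$, so $\xi_i^{(k)}(f)=0\to0$, in agreement with the convention $\xi_i=0$. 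Hence $\xi_i=\lim_k\xi_i^{(k)}(f)$ is measurable for each $i\ge1$.

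The hard part is the dichotomy, and within it the direction $J\cap\xife\ne\emptyset\Rightarrow J$ fat, combined with the choice of a half-open dyadic grid: one must use that $\gD_1$ records a \emph{left} limit and that $u_1\upto t_1$ in $I$ is the same as $u_1\to t_1-$ in $\hI$, so that a jump flagged by $\xife$ is genuinely realized as an oscillation of $f$ strictly inside $(a,b]$. The measurability of $\glf(J)$ is routine once the countable-supremum reduction is checked, and the remainder is bookkeeping with the dyadic limits.
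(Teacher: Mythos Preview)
Your argument is correct and follows essentially the same approach as the paper: dyadic half-open intervals $J_{k,j}=((j-1)2^{-k},j2^{-k}]$, the dichotomy that for large $k$ an interval is fat iff it contains a point of $\xife$ (one direction via \refL{LM4}, the other via the jump being realized as an oscillation inside $(a,b]$), and extraction of $M$ and the $\xi_i$ as limits of measurable dyadic approximants. The only cosmetic difference is that the paper lists the fat intervals in order and takes their right endpoints, whereas you encode the same information through the counting function $C_k$; these give the identical approximants $2^{-k}\lceil 2^k\xi_i\rceil$.
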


\begin{proof}
For $n\ge0$ and $1\le j\le 2^n$, let $J_{n,j}$ be the dyadic interval
$((j-1)2^{-n},j2^{-n}]$. 

  By \refL{LM4}, if $n$ is large enough (depending on $f$), then
$J_{n,j}$ is not bad; hence, if $J_{n,j}$ is fat, then
$J_{n,j}\cap\xife\neq\emptyset$. 

Conversely, if $J_{n,j}\cap\xife\neq\emptyset$, let $t\in J_{n,j}\cap\xife$;
then
\begin{equation}
  \glf(J_{n,j})\ge\gdxf(t)\ge\eps.
\end{equation}
Hence, for large $n$, $J_{n,j}$ contains some $\xi_i\in\xife$ if and only if
$J_{n,j}$ is fat.

Moreover, since $\xife$ is finite by \refL{LM3},
if $n$ is large enough, then each $J_{n,j}$ contains at most one point
$\xi_i$.

For $n\ge0$, suppose that $q_n$ of the intervals $J_{n,j}$, $1\le j\le
2^n$, are fat, and let these be $J_{n,j_i}$, $i=1,\dots,q_n$, with
$j_1<\dots<j_{q_n}$. 
Let further 
\begin{equation}
  \xi_{ni}:=
  \begin{cases}
j_{ni}/2^n, & i\le q_n,    
\\
0, & i>q_n.
  \end{cases}
\end{equation}
We have shown above that for large $n$, $q_n=M$. Hence, $M=\lim_{\ntoo}
q_n$.
Moreover, it follows from the argument above that for each fixed $i\ge1$,
$\xi_{in}\to\xi_i$ as \ntoo.

Since each $\glf(J_{n,j})$ is a measurable functional of $f$ 
(because it suffices to
take the supremum in \eqref{glf} over rational $t,u,t'$), it follows that
each $q_n$ and $\xi_{ni}$ is measurable, and thus so are their limits $M$
and $\xi_i$.
\end{proof}

If $F$ is a finite subset of $\oi$, arrange the elements of $F\cup\setoi$ as
$0=x_0<x_1<\dots<x_N=1$ and define
\begin{equation}  \label{glxf}
\glxf(F):=\max_{1\le i\le N}\glf\bigpar{[x_{i-1},x_i)}
=\max_{1\le i\le N}\glf\bigpar{(x_{i-1},x_i)},
\end{equation}
where the last equality holds by the right-continuity of $f$.

\begin{lemma}\label{LM6}
For every  $f\in\doim$, there exists a sequence $(\xxi_j)_j^\infty$ in $\oi$
such that
\begin{equation}\label{lm6}
  \glxf\bigpar{\set{\xxi_j}_{j=1}^n} \to 0 
\qquad\text{as \ntoo}.
\end{equation}
Moreover, these points can be chosen such that each $\xxi_j$, $j\ge1$, is a
measurable functional of $f\in\doim$.
\end{lemma}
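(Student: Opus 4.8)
The plan is to produce the points $\xxi_j$ explicitly by combining the finite "jump sets" $\xiXe$ across a decreasing sequence of thresholds $\eps$, using the measurability already established in \refL{LM5}. Fix a sequence $\eps_k\downto 0$, say $\eps_k=2^{-k}$. For each $k$, \refL{LM3} gives that $\Xi_{f,\eps_k}$ is finite, and \refL{LM5} gives that its cardinality $M_k:=|\Xi_{f,\eps_k}|$ and each of its elements (enumerated in increasing order, padded with $0$) are measurable functionals of $f$. Since $\eps_k$ is decreasing, the sets $\Xi_{f,\eps_k}$ are increasing, so I can list the elements of $\bigcup_k\Xi_{f,\eps_k}$ as a single sequence $(\xxi_j)_{j=1}^\infty$: first the $M_1$ points of $\Xi_{f,\eps_1}$, then the new points of $\Xi_{f,\eps_2}$, and so on. Each $\xxi_j$ is then a measurable functional of $f$, because it is one of the (measurable) enumerated elements of some $\Xi_{f,\eps_k}$, selected by a measurable rule depending on the measurable quantities $M_1,M_2,\dots$; this handles the second claim of the lemma.

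For the convergence \eqref{lm6}, the key point is to show that once all the "large-jump" points up to threshold $\eps$ are among the $\set{\xxi_j}_{j=1}^n$, the partition they induce has small oscillation. Given $\eps>0$, choose $k$ with $\eps_k<\eps$, and let $n$ be large enough that $\set{\xxi_j}_{j=1}^n\supseteq\Xi_{f,\eps_k}$. I would then invoke \refL{LM4} with threshold $\eps_k$: there is $\eta>0$ such that no interval of length $<\eta$ that misses $\Xi_{f,\eps_k}$ is fat (for threshold $\eps_k$). Now I further refine the partition by inserting enough additional points $\xxi_j$ (or, if needed, any extra points — but the cleanest route is to build the sequence so that it also becomes dense, e.g.\ by interleaving the rationals) so that every subinterval $[x_{i-1},x_i)$ of the partition of $\oi\cup\set{\xxi_j}_{j=1}^n$ either contains a point of $\Xi_{f,\eps_k}$ at its left endpoint or has length $<\eta$. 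By \refL{LM4} every such subinterval that avoids $\Xi_{f,\eps_k}$ has $\glf<\eps_k<\eps$, and the intervals abutting a point of $\Xi_{f,\eps_k}$ are controlled by the one-sided oscillation bounds from \refL{LM2}. Hence $\glxf\bigpar{\set{\xxi_j}_{j=1}^n}<\eps$ for all large $n$, which is \eqref{lm6}.

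The main obstacle is reconciling the two requirements on the sequence $(\xxi_j)$: it must simultaneously (i) contain all the jump points $\bigcup_k\Xi_{f,\eps_k}$ in an order compatible with measurability, and (ii) be fine enough that the interval lengths in the induced partition eventually drop below the $\eta=\eta(\eps_k)$ furnished by \refL{LM4}. The jump points alone do not shrink the lengths of the "flat" intervals between them, so I must interleave a fixed countable dense set (the rationals work, and they are deterministic, hence trivially measurable). I would therefore define $(\xxi_j)$ by alternating: at stage $k$, append the finitely many new points of $\Xi_{f,\eps_k}$ together with all rationals of the form $i/2^k$. This keeps each $\xxi_j$ measurable (each is either a deterministic rational or a measurable element of some $\Xi_{f,\eps_k}$, in a measurably-determined position), guarantees that for large $n$ the mesh between consecutive partition points away from $\Xi_{f,\eps_k}$ is below any prescribed $\eta$, and ensures all relevant jump points are present — so both \refL{LM2} and \refL{LM4} can be applied to force $\glxf\to0$.
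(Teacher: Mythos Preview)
Your approach is correct and essentially matches the paper's: interleave the measurable jump points from \refL{LM5} (over all thresholds $\eps_k$) with a fixed countable dense set, then invoke \refL{LM4} to bound the oscillation on the resulting partition. The paper streamlines two points you should note: it uses a single \emph{fixed} enumeration (independent of $f$) of all the functionals $\bar\xi_{ki}$ from \refL{LM5} (including the $0$-padding for $i>M_k$) together with all rationals, which makes the measurability of each $\xxi_j$ immediate without tracking $f$-dependent positions; and it avoids your separate appeal to \refL{LM2} for intervals abutting $\Xi_{f,\eps}$ by recalling that $\glxf(F)=\max_i\glf\bigl((x_{i-1},x_i)\bigr)$ already uses the \emph{open} subintervals, which automatically miss $\Xi_{f,\eps}$ once all of $\Xi_{f,\eps}$ are partition points, so \refL{LM4} alone suffices.
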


\begin{proof}
For each $k\ge1$, let $\xxxi_{ki}$, $i\ge1$, be the numbers defined in
\refL{LM5} for $\eps=1/k$. Then each $\xxxi_{ki}$ is a measurable functional
of $f$.
Consider all these functionals for $k\ge1$ and $i\ge1$, together with the
constant functionals $r$ for every rational $r\in\oi$, and arrange this
countable collection of functionals in a sequence $\xxi_j$, $j\ge1$
(in an arbitrary but fixed way, not depending on $f$). 

Now suppose that $f\in\doim$, and 
let $F_n:=\set{\xxi_j}_1^n$.
Let $k\ge1$, and let $\eps=1/k$. 
Then $M:=|\xife|<\infty$ by \refL{LM3}, 
and thus
there exists $n_1$ such that if $i\le M$, then $\xxxi_{ki}=\xxi_j$ for some
$j\le n_1$. Hence, if $n\ge n_1$, then 
\begin{equation}\label{n1}
  F_n\supseteq\xife.
\end{equation}

Furthermore, let $\eta$ be as in \refL{LM4}, and let $L:=\floor{1/\eta}+1$.
Since the rational numbers $p/L$, $0\le p\le L$ all appear as some
$\xxi_i$, it follows that there exists $n_2$ such that if $n\ge n_2$ then
$F_n\supseteq\set{p/L}_{p=0}^L$. Hence, if $F_n\cup\setoi=\set{x_i}_1^N$ as in
\eqref{glxf}, then each interval $(x_{i-1},x_i)$ has length $x_i-x_{i-1}\le
1/L<\eta$. Consequently, by \refL{LM4}, the interval is not bad.
Moreover, if  also $n\ge n_1$, then \eqref{n1} holds and thus
$(x_{i-1},x_i)\cap\xife=\emptyset$. Consequently, for $n\ge \max(n_1,n_2)$,
no interval $(x_{i-1},x_i)$ is fat, and thus \eqref{glxf} yields
$\glxf(F_n)<\eps=1/k$. 

Since $k$ is arbitrary, this shows \eqref{lm6}.
\end{proof}

As noted in \refS{SStensor}, 
linear combinations of functions of the form
$\bigotimes_1^m f_i=\prod_{i=1}^m f_i(x_i)$ with $f_i\in\doi$
are dense in $\doim$.
The next lemma shows that $f\in\doim$ can be approximated by such linear
combinations in a measurable way.

\begin{lemma}
\label{LM1}
For every $f\in\doim$, there exist functions $f_{N,k,i}\in\doi$
for $N\ge1$, $1\le k\le N$ and $1\le i\le m$ such that
\begin{equation}\label{lm1}
f_N:= \sum_{k=1}^N 
\bigotimes_{i=1}^m f_{N,k,i}
\to f
\qquad
\text{in }\doim
\end{equation}
(i.e., uniformly), as \Ntoo.
Furthermore, the functions 
$f_{N,k,i}$ can be chosen 
such that the mappings
$f\mapsto f_{N,k,i}$ are measurable $\doim\to\doi$.
\end{lemma}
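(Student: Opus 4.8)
The plan is to approximate $f$ uniformly by a step function that is constant on the boxes of a grid, where the grid lines in each coordinate are supplied by \refL{LM6}. By symmetry, \refL{LM6} holds with any coordinate $i$ in place of the first, so for each $i$ there are measurable functionals $\xi^{(i)}_j=\xi^{(i)}_j(f)$ of $f\in\doim$ with $\glixf\bigpar{\set{\xi^{(i)}_j}_{j=1}^n}\to0$ as \ntoo, where $\glixf$ and $\glif$ are the $i$-th coordinate analogues of $\glxf$ and $\glf$ from \eqref{glxf} and \eqref{glf}. Fix $n$. For each $i$, sort the set $\set{0,1}\cup\set{\xi^{(i)}_j}_{j=1}^n$ as $0=x^{(i)}_0\le\dots\le x^{(i)}_{n+1}=1$ (padding by repetition so that there are always $n+2$ order statistics), put $I^{(i)}_k:=[x^{(i)}_{k-1},x^{(i)}_k)$ for $1\le k\le n+1$, and add the singleton $I^{(i)}_{n+2}:=\set{1}$ to cover the endpoint. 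For a multi-index $\kappa=(k_1,\dots,k_m)$ let $c_\kappa$ be the lower-left corner of the box $\prod_iI^{(i)}_{k_i}$ (its $i$-th coordinate $c_{\kappa,i}$ is the left endpoint of $I^{(i)}_{k_i}$, equal to $1$ on the singleton), and set
\begin{equation*}
 g_n(t):=\sum_{\kappa} f(c_\kappa)\prod_{i=1}^m\etta_{I^{(i)}_{k_i}}(t_i).
\end{equation*}

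First I would prove $\norm{g_n-f}\to0$. If $t$ lies in the box indexed by $\kappa$, then $t_i$ and $c_{\kappa,i}$ both lie in $I^{(i)}_{k_i}$. Writing $p_i:=(c_{\kappa,1},\dots,c_{\kappa,i},t_{i+1},\dots,t_m)$ for $0\le i\le m$ (so $p_0=t$ and $p_m=c_\kappa$), a telescoping along the coordinates gives
\begin{equation*}
 |f(t)-f(c_\kappa)|\le\sum_{i=1}^m|f(p_{i-1})-f(p_i)|,
\end{equation*}
and $p_{i-1},p_i$ differ only in the $i$-th coordinate, which moves from $t_i$ to $c_{\kappa,i}$ within $I^{(i)}_{k_i}$; since $\glif$ is a supremum over all the remaining coordinates, $|f(p_{i-1})-f(p_i)|\le\glif\bigpar{I^{(i)}_{k_i}}\le\glixf\bigpar{\set{\xi^{(i)}_j}_{j=1}^n}$ (the $i$-th term vanishing on the singleton piece, where $c_{\kappa,i}=t_i=1$). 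Hence $\norm{g_n-f}\le\sum_{i=1}^m\glixf\bigpar{\set{\xi^{(i)}_j}_{j=1}^n}\to0$.

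Each summand of $g_n$ is already a tensor product: absorbing the scalar $f(c_\kappa)$ into the first factor writes it as $\bigotimes_{i=1}^m h_i$ with $h_1=f(c_\kappa)\etta_{I^{(1)}_{k_1}}$ and $h_i=\etta_{I^{(i)}_{k_i}}$ for $i\ge2$, and every factor $\etta_{[a,b)}$ or $\etta_{\set1}$ lies in $\doi$. Thus $g_n$ has the desired form with at most $(n+2)^m$ (nonzero) terms. To match the exact indexing of the statement, for each $N$ I would take $n=n(N)$ to be the largest integer with $(n+2)^m\le N$, enumerate the $(n+2)^m$ box-slots in a fixed (lexicographic) order, and pad with identically zero tensor terms to obtain exactly $N$ summands; since $n(N)\to\infty$ as \Ntoo, the previous estimate gives $f_N=g_{n(N)}\to f$.

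It remains to check that each $f\mapsto f_{N,k,i}$ is measurable $\doim\to\doi$. The order statistics $x^{(i)}_k$ are measurable functionals of $f$, being sorted values of the measurable functionals $\xi^{(i)}_j$ of \refL{LM6}; consequently each factor $\etta_{[x^{(i)}_{k-1},x^{(i)}_k)}$ (or $\etta_{\set1}$) is a measurable map into $\doi$, since for every fixed $s$ the evaluation $f\mapsto\etta_{[x^{(i)}_{k-1},x^{(i)}_k)}(s)=\ett{x^{(i)}_{k-1}\le s<x^{(i)}_k}$ is measurable. The delicate factor is the scalar $f(c_\kappa)$, namely the value of $f$ at a corner $c_\kappa=c_\kappa(f)$ that itself depends on $f$; here I would invoke \refL{LM0}, which gives joint measurability of the evaluation $(g,t)\mapsto g(t)$ on $\doim\times\oi^m$, and compose it with the measurable map $f\mapsto\bigpar{f,c_\kappa(f)}$ to conclude that $f\mapsto f(c_\kappa)$ is measurable. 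I expect this last point---promoting the sampled values at $f$-dependent points to measurable functionals of $f$---to be the main obstacle, and it is precisely what \refL{LM0} is designed to overcome; the remaining steps are bookkeeping.
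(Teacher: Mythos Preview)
Your proposal is correct and follows essentially the same route as the paper: invoke \refL{LM6} in every coordinate to get grid points, form the step function $g_n$ that samples $f$ at the lower-left corners, prove uniform convergence by a coordinate-wise telescope bounded by the $\glixf$, absorb the scalar into the first factor, pad to $(n+2)^m$ terms and reindex in $N$, and obtain measurability from \refL{LM0} together with the measurability of the order statistics. The one cosmetic difference is that you pad the sorted grid points by repetition so that there are always exactly $n+2$ of them (some intervals possibly empty), whereas the paper removes duplicates and carries a variable count $n_i\le n+1$; your choice slightly streamlines the bookkeeping for the later reindexing and measurability, but the substance is identical.
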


\begin{proof}
We have so far considered the first coordinate. Of course, the results above
hold for any coordinate. 
We let $\glif(J)$ and $\glixf(F)$ be defined as in 
\eqref{glf} and \eqref{glxf}, but using the $i$-th coordinate instead of the
first. 
Thus \refL{LM6} shows that for every $i\le m$, there exists a sequence
of measurable functionals
$\xi^i_j$, $j\ge1$, such that
\begin{equation}\label{lm6i}
  \glixf\bigpar{\set{\xi^i_j}_{j=1}^n} \to 0 
\qquad\text{as \ntoo}.
\end{equation}
 
For $n\ge0$ and $1\le i\le m$, 
arrange $\set{\xi^i_j}_{j=1}^n\cup\setoi$ in increasing order
as $0=x^i_0<\dots<x^i_{n_i}=1$, where $n_i\le n+1$. 
(Strict inequality is
possible because there may be repetitions in $\set{\xi^i_j}_{j=1}^n\cup\setoi$.)
Let $J^i_j:=[x^i_{j},x^i_{j+1})$ for $j<n_i$ and $J^i_{n_i}:=\set{1}$. Thus
$\set{J^i_j}_{j=0}^{n_i}$ is a partition of $\oi$.
Let $h^i_j:=\etta_{J^i_j}$, the indicator function of $J^i_j$.
(Note that $J^i_j$ and $h^i_j$ depend on $n$.)

Now define the step function $g_n$ on $\oi^m$ by
\begin{equation}\label{gn1}
  g_n:= \sum_{j_1,\dots,j_m}f(x^1_{j_1},\dots,x^m_{j_m}) 
\tensorim h^i_{j_i},
\end{equation}
i.e.,
\begin{equation}\label{gn2}
  g_n(t_1,\dots,t_m):= f(x^1_{j_1},\dots,x^m_{j_m}) 
\qquad\text{when $t_i\in J^i_{j_i}$ ($1\le i\le m$)}.
\end{equation}
It follows from the definitions \eqref{glf} and \eqref{glxf} that if 
$t_i\in J^i_{j_i}$ for every $i$, then
\begin{equation}
  |g_n(t_1,\dots,t_m)-f(t_1,\dots,t_m)|
\le\sumim \glif(J_{j_i}^i)
\le\sumim\glixf(\set{\xi^i_j}_{j=1}^n).
\end{equation}
Hence, \eqref{lm6i} implies that
\begin{equation}
  \norm{g_n-f}=\sup_{t\in\oi^m}|g_n(t)-f(t)|\to0
\qquad
\text{as \ntoo},
\end{equation}
i.e., $g_n\to f$ in $\doim$.

The rest is easy. We can write \eqref{gn1} as
\begin{equation}\label{gn3}
  g_n=\sum_{j_1,\dots,j_m}\tensorim g_{n,j_1,\dots,j_m,i}
\end{equation}
with
\begin{equation}\label{gnjji}
  g_{n,j_1,\dots,j_m,i}:=
\begin{cases}
f(x^1_{j_1},\dots,x^m_{j_m})h^1_{j_1} ,& i=1, 
\\
h^i_{j_i}  , & i>1.
\end{cases}
\end{equation}
The sum in \eqref{gn3} has $\prod_i (n_i+1)\le (n+2)^m$ terms; by rearranging
the terms in lexicographic order of $(j_1,\dots,j_m)$, 
we may write it as
\begin{equation}\label{gn4}
  g_n=\sum_{k=1}^{(n+2)^m}\tensorim g_{n,k,i},
\end{equation}
where we, if necessary, have added terms that are 0 (with all $g_{n,k,i}=0$).
Finally, we relabel again, defining for $(n+2)^m\le N<(n+3)^m$
\begin{equation}
f_{N,k,i}:=
  \begin{cases}
    g_{n,k,i}, & k\le(n+2)^m,
\\
0, & k>(n+2)^m.
  \end{cases}
\end{equation}
Then $f_N$ defined by \eqref{lm1} satisfies $f_N=g_n$ for 
$(n+2)^m\le N<(n+3)^m$, and thus $f_N\to f$ in $\doim$ as \Ntoo.

It is clear from the construction above that every $n_i$ and $x^i_j$ is a
measurable functional of $f$; using \refL{LM0} it follows that
every $g_{n,j_1,\dots,j_m,i}$ defined by
\eqref{gnjji} depends measurably on $f$, and thus so does every $g_{n,k,i}$
and every $f_{N,k,i}$.
\end{proof}

\begin{remark}
  The proof above yields functions $f_{N,k,i}$ of the special
form $a\etta_{[b,c)}$
or $a\etta_{\set1}$, where $a,b,c$ are measurable functionals of $f$.
Cf.\ \cite{Wichura-PhD,Wichura1969}. 
\end{remark}

\begin{proof}[Proof of \refT{TM}]
We use \refL{LM1}, with some fixed measurable choice of $f_{N,k,i}$.
For every $\ell$-tuple $(f^1,\dots,f^\ell)$, we apply \refL{LM1} to each
$f^j$
and obtain, by continuity and multilinearity of $\gY$,
\begin{equation}
  \label{ups}
  \begin{split}
\gY\bigpar{f^1,\dots,f^\ell}
&= \lim_{\Ntoo} \gY\bigpar{f_N^1,\dots,f^\ell_N}
\\&
= \lim_{\Ntoo}\sum_{k_1,\dots,k_\ell=1}^N \gY
 \Bigpar{\bigotimes_{i=1}^m f^1_{N,k_1,i},\dots,\bigotimes_{i=1}^m f^\ell_{N,k_\ell,i}}
.  
\end{split}
\end{equation}

Define a bounded $\ell m$-linear form $\tgY$ on $\doi$ by
\begin{equation}
  \tgY\bigpar{g_{11},\dots,g_{1m},\dots,g_{\ell1},\dots,g_{\ell m}}
:=
\gY\Bigpar{\bigotimes_{i=1}^m g_{1,i},\dots,\bigotimes_{i=1}^m g_{\ell,i}}.
\end{equation}
Then the summand in \eqref{ups} is
$\tgY\bigpar{(f^j_{N,k_j,i})_{1\le j\le \ell,\,1\le i\le m}}$.
We apply the case $m=1$ of the theorem,
which as said above is \cite[Theorem 9.19]{SJ271}, 
to $\tgY$ (with $\ell$ replaced by $\ell m$);
since each $f^j\mapsto f^j_{N,k_j,i}$ is measurable, 
this
shows that
each summand is a measurable function of 
$(f^1,\dots,f^\ell)\in\xpar{\doim}^\ell$.
Hence, so is their sum in \eqref{ups}, 
and thus by \eqref{ups}, also $\gY(f^1,\dots,f^\ell)$.
\end{proof}

\section{A Fubini theorem}\label{SFubini}

Recall that a $\doim$-valued random variable $X$ is a measurable function
$X:\ofp\to\doim$ for some (usually unspecified) probability space $\ofp$;
hence $X$ can be regarded as a function 
$X(\go,t):\gO\times\oi^m\to\bbR$,
and the measurability condition means that $X(\cdot,t)$ is measurable for
each fixed $t\in\oi^m$.
In fact, $X(\go,t)$ is jointly measurable on $\gO\times\oi^m$ as a
consequence of \refL{LM0}.

Since functions in $\doim$ extend uniquely to $\hI^m$, 
yielding an natural identification 
$\doim\cong C(\hI^m)$, we can also regard the random variable $X$ as a
function $X:\ofp\to\chIm$ and thus also as a function
$X(\go,\xt):\gO\times\him\to\bbR$. 
This function is measurable in $\go$ for every fixed $\xt\in\him$ by \refC{CM1}
(or more simply by considering a sequence $t_n\in\oi^m$ that converges to
$\xt$ in $\him$); it is also a continuous function of $\xt$ and thus Baire
measurable for every fixed $\go$. In other words, the function $X(\go,\xt)$
 is separately measurable. 
However, $X(\go,\xt)$ is in general \emph{not} jointly mesurable on
$\gO\times\him$.
In fact, the example in \cite[Remark 9.18]{SJ271} 
shows that \refL{LM0} does not extend to the evaluation map 
$\chIm\times\him$, and we may then choose $\gO=\chIm$ with $X$ the identity.
(Here it does not matter whether we consider Baire or Borel measurability on
$\him$.) 

This lack of joint measurability is a serious technical problem.
A continuous linear functional on $\chIm$ is given by 
integration with respect to 
a Baire measure $\mu$
on $\him$, see \refP{Priesz}, and we would like to be able
to use Fubini's theorem and interchange to order of integrations with respect
to $\mu$ and the probability measure $\P$ on $\gO$, but
the lack of joint measurability means that a straight-forward application of
Fubini's theorem is not possible. However, the following theorem  shows that
the desired result nevertheless holds.

We say that a function $f$ on a measure space $(S,\cS,\mu)$ is
\emph{$\mu$-measurable} if it is defined $\mu$-\aex{} and is
$\mu$-\aex{} equal to an $\cS$-measurable
function (this is equivalent to $f$ being measurable with respect to the
$\mu$-completion of $\cS$). 
Furthermore, $f$ is \emph{$\mu$-integrable} if it is
$\mu$-measurable and $\int |f|\dd|\mu|<\infty$.
(Recall that $|\mu|$ denotes the  variation measure of $\mu$.)

\begin{theorem}
  \label{Tfubini}
Suppose that  $X$ is a random variable in $\doim=C(\hI^m)$, defined on some
probability space $\ofp$,
and that
$\mu\in\MBa(\hI^m)$ is a signed Baire measure on $\hI^m$.
\begin{romenumerate}
\item \label{TfubiniC}
If\/
$\norm{X}\le C$ for some constant $C<\infty$, \ie, $|X(\go,\xt)|\le C$
for all 
$\go\in\gO$ and $t\in\oi^m$,
then
$\go\mapsto \inthm X(\xt)\dd\mu(\xt)$ 
is a (bounded) measurable function on $\gO$, 
\ie, a random variable, 
and\/ $\xt\mapsto\E[X(\xt)]$ is an element of $\chIm$, and thus Baire
measurable on $\him$.
\item \label{Tfubini+}
If\/ $X\ge 0$ and $\mu$ is a positive Baire measure,
then
$\go\mapsto\inthm X(\xt)\dd\mu(\xt)$ is a measurable function
$\gO\to[0,\infty]$,  
\ie, a (possibly infinite) random variable,
and\/ $\xt\mapsto\E[X(\xt)]$ is a Baire measurable function $\him\to[0,\infty]$.
\item \label{TfubiniB}
If\/
$\E|X(t)|\le C$ for some constant $C<\infty$
and all $t\in\oi^m$,
then
$\inthm X(\xt)\dd\mu(\xt)$ exists \as{} and defines an integrable function
on $\gO$,  
\ie, an integrable random variable; 
and\/ $\xt\mapsto\E[X(\xt)]$ is a bounded Baire
measurable function on $\him$.
\item \label{Tfubini+-}
If either
$\E \inthm |X(\xt)|\dd|\mu|(\xt)<\infty$
or
$ \inthm \E[|X(\xt)|]\dd|\mu|(\xt)<\infty$,
then 
$\inthm X(\xt)\dd\mu(\xt)$ exists \as{} and defines an integrable function
on $\gO$,  
\ie, an integrable random variable; similarly,
$\E[X(\xt)]$ exists for $\mu$-\aex{} $\xt$ and defines a $\mu$-integrable 
function on $\him$.
\end{romenumerate}

In all four cases,
\begin{equation}
  \label{fubini}
\E \inthm X(\xt)\dd\mu(\xt)
=
\inthm \E[X(\xt)]\dd\mu(\xt).
\end{equation}

\end{theorem}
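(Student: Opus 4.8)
The plan is to remove the measure $\mu$ from the non-metrizable space $\him$, where $X(\go,\xt)$ fails to be jointly measurable, and replace it by integrations over cubes $I^{k}$, where joint measurability is guaranteed by \refL{LM0}. The tool for this is \refT{Tdual}: applying it to the continuous linear functional $f\mapsto\inthm f\dd\mu$ on $\doim=\chIm$ (which represents $\mu$ via \refP{Priesz}), I obtain signed Borel measures $\mujt$ on $I^{m-\ell}$, indexed by $J\subseteq\mm$ and by jump locations, with $\sum_{J}2^{|J|}\norm{\mu_J}<\infty$, such that
\begin{equation*}
  \inthm f\dd\mu=\sum_{J\subseteq\mm}\sum_{t_{j_1},\dots,t_{j_\ell}}\int_{I^{m-\ell}}\gD_J f\dd\mujt .
\end{equation*}
For fixed jump locations the integrand $\gD_J X(\go,t)$ is a finite alternating sum of values of $X(\go,\cdot)$ and of left limits thereof; by \refL{LM0} each such value is jointly measurable on $\gO\times I^{m-\ell}$, and a left limit is jointly measurable as a pointwise limit of jointly measurable functions. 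Thus every summand lives on a product space where the classical Fubini and Tonelli theorems apply.

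First I would treat the bounded case \ref{TfubiniC}. Measurability of $\go\mapsto\inthm X(\go,\xt)\dd\mu(\xt)$ is immediate: it is the composition of the measurable map $X\colon\gO\to\doim$ with the functional $f\mapsto\inthm f\dd\mu$, which is measurable by \refC{CM1}. That $\bar X(\xt):=\E[X(\xt)]$ lies in $\chIm=\doim$ follows from dominated convergence: if $\xt_n\to\xt$ in $\him$ then $X(\go,\xt_n)\to X(\go,\xt)$ boundedly for each $\go$, so $\bar X(\xt_n)\to\bar X(\xt)$, and since $\him$ is first countable this sequential continuity yields continuity. For the identity \eqref{fubini} I would substitute the displayed decomposition and apply classical Fubini to each summand (legitimate since $|\gD_J X|\le 2^{|J|}C$ and $\mujt$ is finite), using $\E[\gD_J X(\go,t)]=\gD_J\bar X(t)$ (dominated convergence again handling the left limits); the interchange of the countable sum with $\E$ is justified by the uniform bound together with $\sum_J 2^{|J|}\norm{\mu_J}<\infty$.

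The remaining cases are standard limiting arguments built on \ref{TfubiniC}. For \ref{Tfubini+} I would apply \ref{TfubiniC} to the bounded truncations $X\wedge n$ and let $n\to\infty$, using monotone convergence for the inner integral (yielding measurability and the possibly infinite value) and for \eqref{fubini}; then $\xt\mapsto\E[X(\xt)]$ is Baire measurable as an increasing limit of functions in $\chIm$. Case \ref{TfubiniB} follows from the Tonelli estimate $\E\int_{I^{m-\ell}}|\gD_J X(\go,t)|\dd|\mujt|(t)\le 2^{|J|}C\,\norm{\mujt}$ (using $\E|X(\xt)|\le C$ and Fatou for the left limits), which on summation over $J$ and the jump locations is bounded by $C\sum_J 2^{|J|}\norm{\mu_J}<\infty$; hence the series converges absolutely \as, defines an integrable random variable, and \eqref{fubini} follows by dominated convergence. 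Finally, for \ref{Tfubini+-} the two hypotheses are in fact equal, since both coincide with the common value supplied by \ref{Tfubini+} applied to $|X|\ge0$ and $|\mu|\ge0$; granted this finiteness, one splits $X=X^+-X^-$ and $\mu=\mu^+-\mu^-$ and applies \ref{Tfubini+} to the four nonnegative combinations.

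I expect the main obstacle to be the reduction carried out in the first paragraph: verifying that \refT{Tdual} faithfully rewrites $\mu$-integration against the $\him$-extension of $X$ in the jump-and-integral form, and that the resulting integrands $\gD_J X(\go,t)$ are genuinely jointly measurable on $\gO\times I^{m-\ell}$ (the measurability of the left limits being the delicate point), so that the classical Fubini and Tonelli theorems may be invoked term by term. Once this is in place, all four cases are routine.
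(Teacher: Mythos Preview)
Your plan is essentially the paper's own argument: reduce via \refT{Tdual} to integrals over $I^{m-\ell}$ where joint measurability holds, apply classical Fubini term by term in the bounded case, then extend by truncation and Hahn decomposition. Two small points of comparison are worth noting.

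First, for the joint measurability of $(\go,u)\mapsto X(\go,\xt',u)$ on $\gO\times I^{m-\ell}$ with $\xt'\in\hI^\ell$ fixed, the paper packages this more cleanly: it observes that the slice map $\gam\colon f\mapsto f(\xt',\cdot)$ is $(\cD_m,\cD_{m-\ell})$-measurable (using \refR{RcD}), so that $\gam(X)$ is a bona fide $\doiml$-valued random variable, and then \refL{LM0} applies directly in dimension $m-\ell$. Your route via iterated pointwise limits also works, but the paper's formulation avoids having to discuss limits in several left-limit coordinates simultaneously.

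Second, for \ref{TfubiniB} the paper does \emph{not} redo the series estimate; instead it notes that the hypothesis of \ref{TfubiniB} implies that of \ref{Tfubini+-} (via \refL{Lfatou}), and then treats \ref{TfubiniB} and \ref{Tfubini+-} together by the $X_\pm$, $\mu_\pm$ split you describe for \ref{Tfubini+-}. Your direct argument for \ref{TfubiniB} has a small gap as written: after obtaining $\E\inthm X\dd\mu=\sum_{J,t'}\int\gD_J\bar X\dd\mujt$ with $\bar X(\xt)=\E[X(\xt)]$, you still need to reassemble the right side as $\inthm\bar X\dd\mu$, but \refT{Tdual} is stated only for $f\in\doim$, whereas here $\bar X$ is merely bounded Baire measurable. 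This can be fixed (the decomposition in \refT{Tdual} is at the level of measures and extends to bounded Baire integrands), but the paper's reduction to \ref{Tfubini+} sidesteps the issue entirely and simultaneously yields the Baire measurability of $\bar X$ claimed in \ref{TfubiniB}.
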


We first prove a simple lemma, which is useful also in other situations.

\begin{lemma}
  \label{Lfatou}
Suppose that $X$ is a random variable in $\doim$.
Then, for every  $\xt\in\him$, 
\begin{equation}\label{fatou1}
\E|X(\xt)| \le
\sup_{t\in\oi^m}\E|X(t)|.
\end{equation}
Consequently,
\begin{equation}\label{fatou2}
  \sup_{\xt\in\him}\E|X(\xt)| =
\sup_{t\in\oi^m}\E|X(t)|.
\end{equation}
\end{lemma}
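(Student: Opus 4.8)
The plan is to reduce the statement concerning the abstract point $\xt\in\him$ to ordinary points of $\oi^m$ by a density/approximation argument, and then apply Fatou's lemma. The crucial observation is that, although $\iota:\oi\to\hI$ is not continuous, every point of $\hI$ is the limit in the order topology of a sequence of genuine points of $\oi$: a point $s\in I$ is trivially such a limit, while a point $s-$ with $s\in\ooi$ is the limit of any sequence $u_n\in\ooi$ with $u_n\upto s$ and $u_n<s$.

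First I would fix $\xt=(\xt_1,\dots,\xt_m)\in\him$ and construct a sequence $t_n=(t_{n,1},\dots,t_{n,m})\in\oi^m$ with $t_n\to\xt$ in $\him$, coordinate by coordinate: if $\xt_i=s_i\in I$, set $t_{n,i}:=s_i$; if $\xt_i=s_i-$ with $s_i\in\ooi$, choose $t_{n,i}\in\ooi$ with $s_i-1/n<t_{n,i}<s_i$. By the observation above $t_{n,i}\to\xt_i$ in $\hI$ for each $i$, hence $t_n\to\xt$ in the product space $\him$.

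Next, for each fixed $\go\in\gO$ the function $X(\go)$ belongs to $\doim=\chIm$ and is therefore continuous on $\him$, so $t_n\to\xt$ gives $X(\go,t_n)\to X(\go,\xt)$ for every $\go$; in particular $\xt\mapsto X(\xt)$ is a random variable, being a pointwise limit of the random variables $X(t_n)$, so that $\E\abs{X(\xt)}$ is well defined (possibly $+\infty$). Applying Fatou's lemma to $\abs{X(t_n)}\to\abs{X(\xt)}$ then yields
\begin{equation*}
\E\abs{X(\xt)}\le\liminf_{\ntoo}\E\abs{X(t_n)}\le\sup_{t\in\oi^m}\E\abs{X(t)},
\end{equation*}
which is \eqref{fatou1}. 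For \eqref{fatou2} I would identify $\oi^m$ with the subset of $\him$ consisting of points having no coordinate of the form $s-$; this gives $\sup_{t\in\oi^m}\E\abs{X(t)}\le\sup_{\xt\in\him}\E\abs{X(\xt)}$, while taking the supremum over $\xt$ in \eqref{fatou1} gives the reverse inequality, so equality holds.

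I do not anticipate a genuine difficulty; the one point needing care is the topological claim underlying the first step, namely that a sequence of real points of $\ooi$ increasing strictly to $s$ converges to $s-$ rather than to $s$ in $\hI$. This must be verified against the order-topology neighbourhood base of $\hI$ (every basic neighbourhood $(x,s)$ of $s-$ eventually contains such points, whereas every basic neighbourhood of $s$ excludes them), and not against the usual topology of $\oi$, under which such sequences would instead converge to $s$.
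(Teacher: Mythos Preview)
Your proposal is correct and follows essentially the same approach as the paper: pick a sequence $t_n\in\oi^m$ converging to $\xt$ in $\him$, use continuity of $X(\go)\in\chIm$ to get $X(t_n)\to X(\xt)$, and apply Fatou's lemma. The paper's proof is in fact more terse than yours, simply asserting the existence of such a sequence without the explicit coordinatewise construction or the discussion of the order topology.
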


\begin{proof}
If $\xt\in\him$, then there exists a
sequence $t_n\in\oi^m$ such that $t_n\to\xt$, and thus $X(t_n)\to X(\xt)$.
Hence, Fatou's lemma implies
\begin{equation}\label{fatou}
  \E |X(\xt)|\le \liminf_\ntoo\E|X(t_n)|
\le \sup_{t\in\oi^m}\E|X(t)|.
\end{equation}
This shows \eqref{fatou1}, and \eqref{fatou2} is an immediate consequence.
\end{proof}

\begin{proof}[Proof of \refT{Tfubini}]
  
\pfitemref{TfubiniC}
First, $\inthm X(\xt)\dd\mu(\xt)$ is measurable by \refC{CM3}, since
$\chi:f\mapsto\inthm f\dd\mu$ is a continuous linear functional on $\doim$.

Secondly, if $\xt_n\to\xt\in\him$, then $X(\xt_n)\to X(\xt)$ by continuity,
and thus 
$\E [X(\xt_n)]\to \E [X(\xt)]$ by dominated convergence. 
This shows that $\xt\mapsto \E [X(\xt)]$ is sequentially continuous, which is
equivalent to continuity since $\hI$ is first countable 
(see \refSS{SSsplit}).
Alternatively, considering only $t\in\oi^m$, dominated convergence shows
that 
$t\mapsto \E [X(t)]$ is a function in $\doim$, and that its continuous
extension to $\chIm$ is given by $\E [X(\xt)]$.

Finally, to show \eqref{fubini} we consider again the continuous linear
functional
$\chi:f\mapsto\inthm f\dd\mu$ and use the decomposition in \refT{Tdual}.
Fix $J\subseteq[m]$ and suppose for notational convenience that
$J=\set{1,\dots,\ell}$ for some $\ell\in\set{0,\dots,m}$.
(The cases $\ell=0$ and $\ell=m$ are somewhat special; we leave the
 simplifications in these cases to the reader.)
Also fix $t_1,\dots,t_\ell\in\ooi$ and consider the corresponding term
in \eqref{d1b}. Then $\gD_J f(t_1,\dots,t_m)$ is a linear combination of the
$2^\ell$ terms $f(\xt_1,\dots\xt_\ell,t_{\ell+1},\dots,t_m)$ with $\xt_j\in
\set{t_j,{t_j-}}\subset\hI$ for $i=1,\dots,\ell$.

Fix one such choice of $\xt_1,\dots,\xt_\ell$, and 
define for $f\in\chIm$ the function $\gam(f)$ on $\hI^{m-\ell}$ by
$\gam(f)(\xu_{1},\dots,\xu_{m-\ell})
:=f(\xt_1,\dots,\xt_\ell,\xu_1,\dots,\xu_{m-\ell})$; 
in other words, $\gam(f)$ is the restriction of $f$ to the
$(m-\ell)$-dimensional slice with the coordinates in $J$ fixed to 
$\xt':=(\xt_1,\dots,\xt_\ell)$, regarded as a function on $\hI^{m-\ell}$.
Obviously, $\gam(f)\in \chIml$ for every $f\in\chIm$, so
$\gam:\doim=\chIm\to\chIml=\doiml$.
Furthermore, for any fixed $u:=(u_1,\dots,u_{m-\ell})\in\oi^{m-\ell}$,
the mapping $f\mapsto \gam(f)(u)=f(\xt',u)$ is $\cD_m$-measurable on $\doim$, 
see \refR{RcD}; hence, $\gam:\doim\to\doiml$ is measurable for $\cD_m$ and
$\cD_{m-\ell}$. Hence, $\gam(X)$ is a $\doiml$-valued random variable, and
applying \refL{LM0} to $\gam(X)$, we see that
$
  (\go,u)\mapsto \gam(X)(\go,u)
=X(\go,(\xt',u))
$ 
is jointly measurable on $\gO\times\oi^{m-\ell}$. Consequently, we can use
Fubini's theorem and conclude
\begin{equation}
  \E \int_{I^{m-\ell}}X(\xt',u)\dd\mu_{J;t'}(u)
=
   \int_{I^{m-\ell}}\E X(\xt',u)\dd\mu_{J;t'}(u).
\end{equation}
Summing over all $2^\ell$ choices of $\xt'$ for a fixed
$t'=(t_1,\dots,t_\ell)$, after multiplying with the correct sign, we obtain,
letting $\E X$ denote the function $t\mapsto\E[X(t)]$ in $\doim$,
\begin{equation}\label{kq}
  \E \int_{I^{m-\ell}}\gD_J X(t',u)\dd\mu_{J;t'}(u)
=
   \int_{I^{m-\ell}}\gD_J(\E X)(t',u)\dd\mu_{J;t'}(u).
\end{equation}
We now apply the decomposition \eqref{d1b}
 and sum \eqref{kq} over all $t'\in\ooi^\ell$; 
recall that the sum really is countable.
This yields,
using \eqref{d1b} twice and
justifying the interchange of order of summation and expectation in the second
equality below by dominated convergence, because
\begin{align}
\lrabs{\int_{I^{m-\ell}}\gD_J X(t',u)\dd\mu_{J;t'}(u)}
\le 2^J C\norm{\mu_{J;t'}}  
\end{align}
where the sum over $t'$ of the \rhs{s} is finite by \eqref{d1c},
\begin{equation}\label{kkk}
  \begin{split}
\E[\chi_J(X)]&
= \E \sum_{t'\in\ooi^\ell}\int_{I^{m-\ell}}\gD_J X(t',u)\dd\mu_{J;t'}(u)     
\\&
=  \sum_{t'\in\ooi^\ell}\E\int_{I^{m-\ell}}\gD_J X(t',u)\dd\mu_{J;t'}(u) 
\\&
=  \sum_{t'\in\ooi^\ell}\int_{I^{m-\ell}}\gD_J(\E X)(t',u)\dd\mu_{J;t'}(u) 
\\&
= \chi_J(\E X),
  \end{split}
\end{equation}

Finally, summing \eqref{kkk} over $J\subseteq[m]$ yields, by \eqref{d1a},
$\E\chi(X)=\chi(\E X)$, which is \eqref{fubini}.

\pfitemref{Tfubini+}
Let $X_n(\go,\xt):=\min\bigpar{X(\go,\xt),n}$, and note that $X_n$ is a
random variable in $\chIm$ for every $n\ge1$. The result follows by applying
\ref{TfubiniC} to each $X_n$ and letting \ntoo, using the monotone
convergence theorem repeatedly.

\pfitemx{\ref{TfubiniB}, \ref{Tfubini+-}}
Note first that the two alternative conditions in \ref{Tfubini+-}
are equivalent by
\eqref{fubini} applied to $|X|$ and $|\mu|$, which is valid by 
\ref{Tfubini+}.
Furthermore, 
by \refL{Lfatou}, the assumption in \ref{TfubiniB} implies 
\begin{equation}
  \label{lz}
\E|X(\xt)|\le C, 
\qquad \xt\in\him, 
\end{equation}
which in turn implies the assumption in
\ref{Tfubini+-}.

Decompose $X=X_+-X_-$, where $X_+(\go,\xt):=\max\bigpar{X(\go,\xt),0}$
and $X_-(\go,\xt):=\max\bigpar{-X(\go,\xt),0}$, and note that $X_+$ and
$X_-$ are random variables in $\chIm$.
Similarly, use the Hahn decomposition
\cite[Theorem 4.1.4]{Cohn}
$\mu=\mu_+-\mu_-$ as the difference of two positive Baire measures on
$\chIm$.
The result follows by applying \ref{Tfubini+} to all combinations of $X_\pm$
and $\mu_\pm$, noting that this yields
$\E\inthm X_\pm(\xt)\dd\mu_\pm(\xt)=\inthm\E [X_\pm(\xt)]\dd\mu_\pm(\xt)<\infty$,
using the assumptions.
For \ref{TfubiniB}, note also that \eqref{lz} shows that $\E[X_\pm(\xt)]$ is
a bounded (and thus finite) function on $\him$.
\end{proof}

\section{Separability}\label{Ssep}

The Banach space $\doim$ is non-separable, which is a serious complication
in various ways already for $m=1$, see \eg{} \cite{Billingsley} and
\cite{SJ271}. 

Let $X:\ofp\to B$ is a function defined on a \ps{} and taking values in a
Banach space $B$. 
(In particular, $X$ may be a $B$-valued \rv, for a  given \gsf{} on $B$, but
here we do not assume any measurability.)
We then say, following \cite[Definition 2.1]{SJ271}, that
\begin{romenumerate}
\item \label{assep}
$X$ is \emph{\assep} if there exists a separable subspace $B_1\subseteq B$
  such that $X\in B_1$ a.s.
\item \label{wassep}
$X$ is \emph{\wassep} if there exists a separable subspace $B_1\subseteq B$
  such that if $\xx\in B\q$ and $\xx(B_1)=0$, then $\xx(X)=0$ a.s.
\end{romenumerate}

Note that in \ref{wassep}, the exceptional null set may depend on $\xx$.
(In fact, this is what makes the difference from \ref{assep}:
to assume $\xx(X)=0$ outside some fixed null set for all $\xx$ as in
\ref{wassep} is equivalent to \ref{assep}.)

\begin{remark}
  \label{Rassep}
A.s.\ separability is a powerful condition, which essentially reduces the
study of $X$ 
to the separable case.
Unfortunately, it is too strong for our purposes. 
In the case $m=1$, a random variable $X$ taking values in \doi{} is \assep{}
if and only if there exists a fixed countable set $N$ such that \as{} every
discontinuity point of $X$ belongs to $N$ \cite[Theorem 9.22]{SJ271}; we
extend this to \doim{} in \refT{Tassep} below.
Hence, 
in applications to random variables in $\doi$ or $\doim$,
this condition is useful only for variables that have a fixed set of
discontinuities, but  
not when there are discontinuities at random locations.
We therefore mainly use the weaker property '\wassep' defined in \ref{wassep}.
\end{remark}

\begin{example}
Let $U\sim \U(0,1)$ be a uniformly distributed random
  variable,
and let $X$ be the random element of $\doi$ given by $X=\etta_{[U,1]}$,
\ie{} $X(t)=\ett{U\le t}$. 
Then, see \cite[Example 2.5]{SJ271} for details, 
$X$ is not \assep, but
$X$ is \wassep. 
(We can take $B_1=\coi$ in the definition \ref{wassep} above.)
\end{example}

We note the following simple properties. 
\begin{lemma}
  \label{LAS}
Let $B$ be a Banach space, and assume that $X_1,X_2,\dots$ are \wassep{}
functions $\ofp\to B$ for some \ps{} $\ofp$.
  \begin{romenumerate}
  \item \label{LAS-sum}
Any finite linear combination $\sum_{i=1}^N a_iX_i$ is \wassep.
\item \label{LAS-lim}
If  $X_n\to X$ in $B$ a.s., then $X$ is \wassep.
\item \label{LAS-map}
If
$T:B\to \tilde B$ is a
bounded linear map into another Banach space,
then $T(X_1)$ is \wassep{} in $\tilde B$.
  \end{romenumerate}
The same properties hold for \assep{} functions too.
\end{lemma}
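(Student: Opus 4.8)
The plan is to handle all three parts of the \wassep{} claim by a single device: assemble one separable subspace out of the countably many separable subspaces that witness the hypotheses. The elementary fact I would use throughout is that the closed linear span of a countable union of separable subspaces of $B$ is again separable; indeed, choosing a countable dense set $D_i$ in each witnessing subspace, the rational linear combinations of $\bigcup_i D_i$ form a countable dense subset of the closed span. With this in hand, each part reduces to checking that a functional vanishing on the assembled subspace kills the relevant variable almost surely.

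For \ref{LAS-sum}, let $B_i$ be a separable subspace witnessing that $X_i$ is \wassep, and set $B_0:=\overline{\operatorname{span}\bigpar{\bigcup_{i=1}^N B_i}}$, which is separable. If $\xx\in B\q$ vanishes on $B_0$, then it vanishes on each $B_i$, so $\xx(X_i)=0$ \as{} for every $i$; since the sum is finite, $\xx\bigpar{\sum_{i=1}^N a_iX_i}=\sum_{i=1}^N a_i\xx(X_i)=0$ \as, so $B_0$ witnesses that $\sum_i a_iX_i$ is \wassep. For \ref{LAS-map}, let $B_1$ witness that $X_1$ is \wassep{} and put $\tilde B_1:=\overline{T(B_1)}\subseteq\tilde B$, which is separable as the closure of the continuous image of a separable set. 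If $\yy\in\tilde B\q$ vanishes on $\tilde B_1$, then $T\q\yy=\yy\circ T$ vanishes on $B_1$, whence $\yy(T(X_1))=(T\q\yy)(X_1)=0$ \as; thus $\tilde B_1$ witnesses that $T(X_1)$ is \wassep.

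Part \ref{LAS-lim} is the only place where the exceptional null sets genuinely interact, and it is the step I would treat most carefully. Let $B_n$ witness that $X_n$ is \wassep{} and set $B_0:=\overline{\operatorname{span}\bigpar{\bigcup_n B_n}}$, separable. Fix $\xx\in B\q$ vanishing on $B_0$. For each $n$, $\xx(X_n)=0$ outside a null set $N_n$ (which may depend on $\xx$), and $X_n\to X$ in $B$ outside a further null set $N$. On the complement of the null set $N\cup\bigcup_n N_n$ we have both $X_n\to X$ and $\xx(X_n)=0$ for all $n$, so continuity of $\xx$ gives $\xx(X)=\lim_n\xx(X_n)=0$. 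Hence $\xx(X)=0$ \as, and $B_0$ witnesses that $X$ is \wassep. The point I would stress is that one fixes a single $\xx$ at a time and is allowed to let its exceptional null set depend on $\xx$; this is precisely why a single countable family of null sets suffices, and why it is \emph{weak} \as{} separability, rather than the stronger \as{} separability, that is preserved under mere almost-sure convergence.

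Finally, the \assep{} case follows with the very same subspaces $B_0$ and $\tilde B_1$, the verifications only getting simpler. In \ref{LAS-sum} and \ref{LAS-lim} one has $X_i\in B_i\subseteq B_0$ (respectively $X_n\in B_n\subseteq B_0$) \as, so the finite combination lies in $B_0$ \as, and in the limit case $X=\lim_n X_n\in B_0$ \as{} precisely because $B_0$ is closed; in \ref{LAS-map}, $T(X_1)\in T(B_1)\subseteq\tilde B_1$ \as. I do not expect any serious obstacle here; the only mild subtlety, again in \ref{LAS-lim}, is to invoke the closedness of $B_0$ in order to push the almost-sure limit back into the subspace.
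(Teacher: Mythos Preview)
Your proof is correct and follows essentially the same approach as the paper: form the closed linear span of the countably many witnessing separable subspaces and verify it works in each case, using $\tilde B_1=\overline{T(B_1)}$ for part \ref{LAS-map}. The paper's own proof is in fact a terse sketch that omits the details you have supplied.
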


\begin{proof}
  Let $B_{1i}$ be a separable subspace of $B$ satisfying the property in the
  definition  for $X_i$, and let  $B_1$ be the closed
  subspace generated by $\bigcup_i B_{1i}$. Then $B_1$ is separable, and it
  is easily seen that this subspace verifies \ref{LAS-sum} and
  \ref{LAS-lim}.
For \ref{LAS-map} we similarly use $\tilde B_1:=T(B_1)$.
We omit the details.
\end{proof}

It was shown in \cite[Theorems 9.24 and 9.25]{SJ271} that random variables
in \doi{}   always are \wassep, 
and so are tensor powers of them in either the injective or projective
tensor power. We extend this to \doim.

\begin{theorem}
  \label{TDwassep}
  \begin{thmenumerate}
  \item \label{TDwassep1}
Let $X$ be a \dmeas{} 
$\doim$-valued \rv.
Then $X$ is \wassep.
\item \label{TDwassep2}
More generally, let 
 $X_1,\dots,X_\ell$ be \dmeas{} 
$\doim$-valued \rv{s}.
Then, 
$\bigtensor_{i=1}^\ell X_i$ 
is \wassep{} in the projective and injective tensor products 
$\doim\ptpx\ell$ and $\doim\itpx\ell$.
  \end{thmenumerate}
\end{theorem}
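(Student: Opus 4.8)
The plan is to reduce the multidimensional statement to the known one-dimensional case from \cite[Theorems 9.24 and 9.25]{SJ271}, exploiting the tensor-product identification $\doim=\doi\itpm$ established in \refSS{SStensor}. For part \ref{TDwassep1}, I would first observe that a $\doim$-valued random variable $X$ can be viewed, through this identification, as a random variable in $\doi\itpm=\doix{m}$; but more usefully, the density of linear combinations of functions $\bigotimes_{i=1}^m f_i$ suggests approximating $X$ itself. The key tool is \refL{LM1}: for each fixed $\go$ it produces a measurable approximation $X_N=\sum_{k=1}^N\bigotimes_{i=1}^m f_{N,k,i}(X)$ converging to $X$ in $\doim$, where each map $X\mapsto f_{N,k,i}$ is a measurable functional into $\doi$. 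Since $\doi$-valued random variables are \wassep{} by the one-dimensional result, and elementary tensors of \wassep{} $\doi$-valued variables are \wassep{} in $\doim=\doi\itpm$ (again the one-dimensional tensor result, \cite[Theorem 9.25]{SJ271}), each summand $\bigotimes_{i=1}^m f_{N,k,i}(X)$ is \wassep. By \refL{LAS}\ref{LAS-sum} the finite sum $X_N$ is \wassep, and by \refL{LAS}\ref{LAS-lim} the a.s.\ limit $X$ is \wassep.

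For part \ref{TDwassep2}, I would proceed similarly but now track $\ell$ independent approximations. Applying \refL{LM1} to each $X_j$ gives measurable approximants $X_j^{(N)}\to X_j$ in $\doim$ a.s., and hence $\bigtensor_{j=1}^\ell X_j^{(N)}\to\bigtensor_{j=1}^\ell X_j$ in both tensor products (the injective and projective tensor norms are cross-norms, so convergence of each factor gives convergence of the elementary tensor, and boundedness of the factors controls the products). Expanding each $X_j^{(N)}$ as a sum of elementary $\doi$-tensors and using multilinearity of the tensor map, $\bigtensor_{j=1}^\ell X_j^{(N)}$ becomes a finite sum of terms of the form $\bigtensor$ over $\ell m$ factors, each a \wassep{} $\doi$-valued random variable. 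The one-dimensional tensor-power result \cite[Theorem 9.25]{SJ271} shows each such elementary $\doi\itpx{\ell m}$ (or $\doi\ptpx{\ell m}$) tensor is \wassep; then \refL{LAS}\ref{LAS-sum} handles the finite sum and \refL{LAS}\ref{LAS-lim} handles the a.s.\ limit. Finally, since $\doim\itpx\ell=\doi\itpx{\ell m}$ and the projective norm on $\doim\ptpx\ell$ dominates (so a \wassep{} property there follows from the weaker norm via \refL{LAS}\ref{LAS-map} applied to the natural inclusion), the conclusion transfers to the required spaces.

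The main obstacle I anticipate is bookkeeping around the two tensor norms and ensuring the approximation-and-limit argument is valid simultaneously in the projective and injective settings. In the injective case convergence is straightforward since $\doim\itpx\ell=\doix{m\ell}$ is just a $D$-space with the sup norm, so $\bigtensor X_j^{(N)}\to\bigtensor X_j$ pointwise-uniformly and a.s. The projective case is subtler: one must check that $\bigtensor_{j=1}^\ell X_j^{(N)}\to\bigtensor_{j=1}^\ell X_j$ in the projective norm a.s., which requires uniform control of $\norm{X_j^{(N)}}$; here I would note that \refL{LM1} yields $\sup_N\norm{X_j^{(N)}}\le\norm{X_j}$ (or a fixed multiple thereof, since the approximants are built from values of $X_j$), so a telescoping estimate using the cross-norm inequality $\normp{a_1\tensorr a_\ell-b_1\tensorr b_\ell}\le\sum_k\normp{\dotsm}$ gives the needed convergence. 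Apart from this, every step is an application of a previously established lemma, so the proof is essentially a matter of assembling \refL{LM1}, \refL{LAS}, and the one-dimensional results correctly.
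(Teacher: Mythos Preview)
Your proposal is correct and follows essentially the same route as the paper: reduce to the one-dimensional case via \refL{LM1}, expand $\bigtensor_j X_j^{(N)}$ as a finite sum of $\ell m$-fold tensors of $\doi$-valued random variables, invoke \cite[Theorems 9.24--9.25]{SJ271}, and pass to sums and limits using \refL{LAS}. The paper organizes the tensor bookkeeping slightly differently---it first observes that the continuous map $\doi\ptpx{m}\to\doi\itpx{m}=\doim$ induces a continuous map $\doi\ptpx{\ell m}\to\doim\ptpx\ell$, proves \wassepy{} in $\doim\ptpx\ell$, and then deduces the injective case from the inclusion $\doim\ptpx\ell\to\doim\itpx\ell$---but this is the same idea you sketch. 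One small point: \cite[Theorem 9.25]{SJ271} as stated covers only the equal-factor case $X_1=\dots=X_\ell$; the paper remarks that its proof extends to distinct $X_i$, and you should note this extension explicitly when you invoke it.
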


\begin{proof}
  Consider first $m=1$. Then, as said above, \ref{TDwassep1} is
  \cite[Theorem 9.24]{SJ271}, while
  \cite[Theorem 9.25]{SJ271} is the special case $X_1=\dots=X_\ell$ of
  \ref{TDwassep2}; moreover,  it is easily checked that the proof of
  \cite[Theorem 9.25]{SJ271} applies also to the case of general
  $X_1,\dots,X_\ell$.
(The main difference in the proof is that we fix a countable set $N$ 
and consider $X_1,\dots,X_\ell$ that \as{} are continuous at every fixed
$t\notin N$.)
Hence, the results hold for $m=1$.

In general, we apply \refL{LM1} to $X_1,\dots,X_\ell$ and conclude that
there are random variables 
$X^j_{N,k,i}$ in \doi{} such that,
 for every $j=1,\dots,\ell$,
\begin{equation}\label{lm1l}
X^j_N:= \sum_{k=1}^N 
\bigotimes_{i=1}^m X^j_{N,k,i}
\to X_j
\qquad
\text{in }\doim
\end{equation}
as \Ntoo.
Let
$  X_N:=X^1_N\tensor\dotsm\tensor X^\ell_N
\in \doim\ptpx\ell$.
Then $X_N\to X:=X^1\tensor\dotsm\tensor X^\ell$ 
in $\doim\ptpx\ell$
as \Ntoo.
Furthermore, by \eqref{lm1l},
\begin{equation}
  X_N
= \sum_{k_1,\dots,k_\ell\le N} 
\bigotimes_{j=1}^\ell\bigotimes_{i=1}^m X^j_{N,k_j,i}
.  
\end{equation}
By the case $m=1$ of the theorem (with $\ell$ replaced by $\ell m$),
each term in this sum is a \wassep{} random variable in 
$\doi\ptpx{\ell m}$.
Since the canonical inclusion $\doi\ptpx m\to\doi\itpx m=\doim$ is
continuous, and thus induces a continuous map
$\doi\ptpx{\ell m}=(\doi\ptpx m)\ptpx\ell\to\doim\ptpx\ell$,
it follows by \refL{LAS} that each $X_N$ is \wassep{} in 
$\doim\ptpx\ell$, and thus so is their limit $X$.

This proves the result for the projective tensor product.
For the injective \tp{} we use \refL{LAS}\ref{LAS-map} again,  with the
continuous inclusion
$\doim\ptpx \ell\to\doim\itpx \ell$.
\end{proof}

In contrast, and for completeness, we have the following characterization of
\assep{} random variables. (The case $m=1$ is \cite[Theeorem 9.22]{SJ271}.)
\begin{theorem}
  \label{Tassep}
Let $X$ be a \dmeas{} $\doim$-valued \rv.
Then
$X$ is \assep{} if and only if there exist (non-random) countable subsets
$\NA_1,\dots,\NA_m$ of\/ $\oi$ 
such that for every $i\le m$,
\as{} $\gD_i X(t)=0$ for all $t=(t_1,\dots,t_m)$
with $t_i\notin \NA_i$.
\end{theorem}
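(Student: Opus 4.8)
The plan is to prove both implications, using as the key input \refL{LM3} (which holds for every coordinate, not only the first, by the symmetry noted in the proof of \refL{LM1}) together with the split-interval picture of \refSS{SSsplit}. For $g\in\doim$ and $1\le i\le m$ write $N_i(g):=\set{s\in\oi:\gD_i g(t)\neq0\text{ for some }t\text{ with }t_i=s}$ for the set of coordinate-$i$ jump locations, and set $\gd_{g,i}(s):=\sup_{t'\in\oi^{m-1}}\abs{\gD_i g(s,t')}$. The crucial observation is that each $N_i(g)$ is countable: \refL{LM3}, applied to the $i$-th coordinate, says $\set{s:\gd_{g,i}(s)\ge\eps}$ is finite for every $\eps>0$, whence $N_i(g)=\bigcup_{k\ge1}\set{s:\gd_{g,i}(s)\ge1/k}$ is a countable union of finite sets.

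For the direction \emph{(\assep{} $\Rightarrow$ jump condition)}, I would pick a separable closed subspace $B_1\subseteq\doim$ with $X\in B_1$ \as, take a countable dense set $\set{g_n}\subseteq B_1$, and put $A_i:=\bigcup_n N_i(g_n)$, a countable subset of $\oi$. The point is then to show $N_i(g)\subseteq A_i$ for \emph{every} $g\in B_1$. This is a semicontinuity estimate: since $\norm{\gD_i}\le2$, the map $g\mapsto\gd_{g,i}(s)$ is $2$-Lipschitz in the sup norm uniformly in $s$, so if $\gd_{g,i}(s)=\eps>0$ and $\norm{g-g_n}<\eps/4$ then $\gd_{g_n,i}(s)\ge\eps/2>0$, giving $s\in N_i(g_n)\subseteq A_i$. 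Hence, on the \as{} event $\set{X\in B_1}$ we have $N_i(X)\subseteq A_i$ for every $i$, which is exactly the stated conclusion (in the slightly stronger form that one null set works simultaneously for all $i$).

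For the converse \emph{(jump condition $\Rightarrow$ \assep)}, given countable $A_1,\dots,A_m$ I would take $B_1:=\set{f\in\doim:\gD_i f(t)=0\text{ whenever }t_i\notin A_i,\ 1\le i\le m}$. Since point evaluations and left-limit functionals are continuous on $C(\hI^m)=\doim$, each condition $\gD_i f(t)=0$ is closed, so $B_1$ is a closed linear subspace; and by hypothesis (intersecting the $m$ \as{} events) $X\in B_1$ \as. It remains to check $B_1$ is separable. For countable $A\subseteq\oi$, let $\hI_A$ be the variant of the split interval in which only the points of $A$ are doubled, obtained from $\hI$ by gluing $t-$ to $t$ for $t\notin A$; since only countably many points are split, $\hI_A$ is second countable, hence a compact metric space. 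Under $\doim\cong C(\hI^m)$, the extension of $f\in B_1$ is invariant under these gluings in each coordinate, so it factors through the quotient $\hI^m\to\hI_{A_1}\times\dots\times\hI_{A_m}$, yielding an isometric isomorphism $B_1\cong C\bigpar{\hI_{A_1}\times\dots\times\hI_{A_m}}$. The product is compact metric, so this $C(\cdot)$ space, and therefore $B_1$, is separable; thus $X$ is \assep.

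The countability of $N_i(g)$ and the Lipschitz estimate are immediate from \refL{LM3} and $\norm{\gD_i}\le2$, so the only step I expect to need real care is the separability in the converse: verifying that doubling $\oi$ at only countably many points gives a metrizable compact space and that $B_1$ is exactly $C$ of the resulting finite product. (Alternatively one can deduce separability from the case $m=1$, taking $B_1=D_{A_1}\itensorr D_{A_m}$ with $D_{A_i}\subseteq\doi$ the $m=1$ separable subspaces of functions whose jumps lie in $A_i$, since an injective tensor product of finitely many separable spaces is separable; but this rests on the same identification of $B_1$ with continuous functions on the product.)
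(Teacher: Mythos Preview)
Your argument is correct. The forward implication is essentially the paper's proof (via \refL{Lassep}, which is exactly your countability observation from \refL{LM3}); you make explicit the Lipschitz step needed to pass from the dense sequence $\{g_n\}$ to all of $B_1$, which the paper leaves implicit.

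The converse, however, takes a genuinely different route. The paper reuses the step-function approximation machinery already built in Lemmas \ref{LM4}--\ref{LM6} and \ref{LM1}: it observes that under the jump hypothesis the measurable points $\xi^i_j$ produced there \as{} fall into $A_i\cup(\bbQ\cap\oi)$, so the approximants $g_n$ in \eqref{gn1} \as{} lie in the closed linear span of a fixed countable family of indicator tensors, and $X=\lim g_n$ inherits this. Your approach is instead topological: you identify the closed subspace $B_1$ of functions with coordinate-$i$ jumps confined to $A_i$ with $C\bigl(\hI_{A_1}\times\dots\times\hI_{A_m}\bigr)$, and argue that splitting $\oi$ at only countably many points yields a second countable (hence metrizable) compact, so the product is compact metric and $B_1$ is separable. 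This is cleaner and more conceptual, and avoids invoking the somewhat technical Lemmas \ref{LM4}--\ref{LM6}; the paper's approach has the virtue of recycling machinery it needed anyway for \refT{TM}. One small point worth spelling out in your write-up: the condition $\gD_i f(t)=0$ for $t\in\oi^m$ with $t_i\notin A_i$ a priori only gives $\hat f(\dots,s,\dots)=\hat f(\dots,s-,\dots)$ for the other coordinates in $I^{m-1}$, and you need density of $I^{m-1}$ in $\hI^{m-1}$ plus continuity of $\hat f$ to extend this to all of $\hI^{m-1}$ before factoring through the quotient.
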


We consider first the deterministic case.
\begin{lemma}
  \label{Lassep}
Let $f\in\doim$.
Then
there exist countable subsets
$\NA_1,\dots,\allowbreak \NA_m$ of\/ $\oi$ 
such that for every $i\le m$,
$\gD_i f(t)=0$ for all $t=(t_1,\dots,t_m)$
with $t_i\notin \NA_i$.
\end{lemma}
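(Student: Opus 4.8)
The plan is to deduce this directly from the one-coordinate analysis already carried out, in particular from \refL{LM3}. Fix a coordinate $i\in\mm$; since the earlier lemmas hold verbatim for every coordinate, it suffices to treat $i=1$ and then repeat the argument coordinate by coordinate. Recall the quantity $\gdxf(t_1)=\sup_{t'\in\oimi}\bigabs{\gD_1 f(t_1,t')}$ from \eqref{gdxf}, which measures the largest jump of $f$ along the first coordinate at level $t_1$, and the set $\xife=\set{t_1\in\oi:\gdxf(t_1)\ge\eps}$ from \eqref{xife}.

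The key input is \refL{LM3}, which asserts that $\xife$ is finite for every $\eps>0$. Hence the set
\begin{equation*}
\NA_1:=\set{t_1\in\oi:\gdxf(t_1)>0}=\bigcup_{k=1}^\infty\Xi_{f,1/k}
\end{equation*}
is a countable union of finite sets, and therefore countable. I would then observe that for $t_1\notin\NA_1$ we have $\gdxf(t_1)=0$, which by the definition \eqref{gdxf} means precisely that $\gD_1 f(t_1,t')=0$ for every $t'\in\oimi$; that is, $\gD_1 f(t)=0$ for all $t=(t_1,\dots,t_m)$ whose first coordinate $t_1$ lies outside $\NA_1$, as required.

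Repeating this for the $i$-th coordinate, using the functionals $\gD^{*i}_f$ and sets $\Xi^i_{f,\eps}$ defined exactly as in \eqref{gdxf} and \eqref{xife} but with the $i$-th coordinate playing the role of the first (as in the proof of \refL{LM1}), and invoking \refL{LM3} for that coordinate, produces a countable set $\NA_i$ with the stated property. There is essentially no obstacle at this stage: all of the genuine work is contained in \refL{LM3} (itself a compactness argument in the spirit of \cite[Lemma 1.3]{Neuhaus}), and the only remaining point is the elementary observation that the condition $\gdxf(t_1)>0$ is the countable union over $k$ of the finite conditions $\gdxf(t_1)\ge 1/k$.
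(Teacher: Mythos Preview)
Your proof is correct and follows essentially the same approach as the paper's: define $\NA_1=\set{t_1:\gdxf(t_1)>0}=\bigcup_k\Xi_{f,1/k}$, invoke \refL{LM3} to see this is a countable union of finite sets, observe that $t_1\notin\NA_1$ forces $\gD_1 f(t_1,t')=0$ for all $t'$, and then relabel coordinates. The paper's argument is identical, just more tersely stated.
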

\begin{proof}
  Consider the first coordinate and let,
recalling \eqref{gdxf} and \eqref{xife}, 
  \begin{equation}
\NA_1:=\set{t\in\oi:\gdxf(t)>0}=\bigcup_k\Xi_{f,1/k}.    
  \end{equation}
Then $\NA_1$ satisfied the claimed property by the definition \eqref{gdxf},
and $\NA_1$ is countable by \refL{LM3}. The same holds for $i>1$ by
relabelling the coordinates.
\end{proof}

\begin{proof}[Proof of \refT{Tassep}]
If $X$ is \assep, let $D_1$ be a separable subspace of $\doim$ such that
$X\in D_1$ a.s.
Let $\set{f_n}$ be a countable dense subset of $D_1$, and apply
\refL{Lassep} to $f_n$ for each $n$, yielding countable sets $\NA_{in}$.
Define $\NA_i:=\bigcup_n \NA_{in}$. 
Then, for every $i$,  
$\gD_i f(t)=0$ for all $t=(t_1,\dots,t_m)$ with $t_i\notin \NA_i$
and all $f\in D_1$; hence $\gD_i X(t)=0$ \as{} for all such $t$.

Conversely, suppose that such $\NA_1,\dots,\NA_m$ exist.
Then, using the notation in \refS{Smeas} (with $f=X$),
\as{} $\gdxX(t_1)=0$ for every $t\notin \NA_1$, and thus $\xiXe\subseteq \NA_1$.
Hence, the construction in the proof of \refL{LM6} yields $\xi_j\in
\tNA_1:=\NA_1\cup( \bbQ\cap\oi)$ \as{} for every $j$.
Consequently, in the proof of \refL{LM1}, 
\as{} every $\xi^i_j\in\tNA_i:=\NA_i\cup(\bbQ\cap\oi)$ and every $x^i_j\in \tNA_i$.
Let $Q_i$ be the countable subset of $\doi$ consisting of 
$\etta_{[a,b)}$ with $a,b\in\tNA_i$, together with $\etta_{\set1}$.
Then \as{} $h^i_j\in Q_i$, and thus if $D_1$ is the closed separable
subspace of $\doim$ generated by the countable set $\tensorim h_i$ with
$h_i\in Q_i$, then \as{} $g_n\in Q$ for every $n$, and thus \as{} $X\in Q$. 
\end{proof}

\section{Moments}\label{Smom}

For a random variable $X$ with values in some Banach space $B$, moments of
$X$ can be defined as $\E [X\tpx\ell]$, see \cite{SJ271}.
However, there are several possible interpretations of this; we may 
take the expectation in either the projective tensor power $B\ptpx\ell$ or
the injective tensor power
$B\itpx\ell$, and we can assume that the expectation exists in Dunford,
Pettis or Bochner sense, thus giving six different cases. 
See 
\cite{SJ271} 
(and the short summary in \refApp{Aintegrals})
for definitions and further
details; we recall here only the implications for existence:
\begin{gather*}
\text{projective $\implies$ injective},
\\
\text{Bochner $\implies$ Pettis $\implies$ Dunford},
\end{gather*}
and that if the moment exists in Bochner and Pettis sense, it is an element
of the tensor product, but a Dunford moment is in general
an element of the bidual of the tensor product.

In the special case $\ell=1$, when we consider the mean $\E[X]$, there is no
difference between the projective and injective case, but we can still
consider the mean in Bochner, Pettis or Dunford sense.

\subsection{Bochner and Pettis moments as functions}
We consider here the different moments when  $B=\doim$.
Recall first from \refSS{SStensor} that $\doim\itpl=\doix{\ell m}$. Hence,
if the injective moment
$\E [X\itpl]$ exists in Bochner or Pettis sense,
then this moment is an element of $\doix{\ell m}$, and thus a function on
$\oi^{\ell m}$. 

Moreover, recall also that $\doim$ has the approximation property and thus
the natural map $\doim\ptpl\to\doim\itpl=\doix{\ell m}$ is a continuous
injection. 
Hence, if the projective moment
$\E [X\ptpl]$ exists in Bochner or Pettis sense, then it 
too can be regarded as function in  $\doix{\ell m}$, and 
it equals the corresponding injective moment.
(Cf.\ \cite[Theorem 3.3]{SJ271}.)

It is easy to identify this function that is the moment (in any of these
four senses for which the moment exists).

\begin{theorem}\label{Tmoment}
Let $X$ be a \dmeas{} random variable in \doim{} 
and let $\ell\ge1$.
If $X$ has a projective or injective moment $\E [X\tpl]$
in Bochner or Pettis sense,
then this
moment $\E[X\tpx\ell]$
is the function in $\doix{\ell m}$
given by
\begin{equation}\label{moment}
  \E [X\tpx\ell](t_1,\dots,t_\ell)
=\E\lrpar{\prodil X(t_i)}, \qquad t_i\in \oi^m.
\end{equation}
\end{theorem}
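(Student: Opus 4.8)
The plan is to exploit the one defining property shared by the Bochner and the Pettis integral — that they commute with continuous linear functionals — together with the fact that point evaluations are continuous linear functionals on $\doix{\ell m}=C(\hI^{\ell m})$.

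First I would reduce both the projective and the injective cases to a single computation inside $\doix{\ell m}$. Recall from \refSS{SStensor} that $\doim\itpl=\doix{\ell m}$, and that the natural map $\doim\ptpl\to\doim\itpl=\doix{\ell m}$ is a continuous injection; moreover, as observed just before the statement, whenever the projective moment exists in Bochner or Pettis sense its image under this injection \emph{equals} the corresponding injective moment. It therefore suffices to identify, as a function on $\oi^{\ell m}$, the element of $\doix{\ell m}$ represented by $\E[X\tpl]$.

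Next I would fix a point $(t_1,\dots,t_\ell)\in(\oi^m)^\ell=\oi^{\ell m}$ and take $\chi$ to be the point evaluation $f\mapsto f(t_1,\dots,t_\ell)$. Since $|f(t_1,\dots,t_\ell)|\le\norm f$, this $\chi$ is a continuous linear functional on $\doix{\ell m}$, of norm at most $1$. Writing $y:=\E[X\tpl]$, both the Bochner and the Pettis integral satisfy $\chi(y)=\E[\chi(X\tpl)]$ for every continuous linear functional $\chi$ (see \refApp{Aintegrals} and \cite{SJ271}); in the projective case one applies this to the functional obtained by pulling $\chi$ back along the continuous inclusion $\doim\ptpl\to\doix{\ell m}$. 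Under the identification \eqref{tensorf}, the tensor $X\tpl=X\tensor\dotsm\tensor X$ is exactly the function $(s_1,\dots,s_\ell)\mapsto\prod_{i=1}^\ell X(s_i)$ on $\oi^{\ell m}$, so $\chi(X\tpl)=\prod_{i=1}^\ell X(t_i)$. Hence
\[
\E[X\tpl](t_1,\dots,t_\ell)=\chi(y)=\E\bigsqpar{\chi(X\tpl)}=\E\Bigpar{\prodil X(t_i)},
\]
which is \eqref{moment}.

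I do not expect a genuine obstacle here; the proof is essentially immediate once the commutation property is invoked, and the only points requiring care are bookkeeping ones. The integrand $\prodil X(t_i)$ is a finite product of the random variables $X(t_i)$, hence measurable, and its integrability is automatic from the existence of the Pettis (\emph{a fortiori} Bochner) moment, since that existence already forces $\chi(X\tpl)$ to be integrable for every continuous $\chi$, in particular for the point evaluations. The remaining care is simply to keep straight the chain of isometric identifications $\doim\itpl=C(\hI^m)\itpl=C(\hI^{\ell m})=\doix{\ell m}$ and the tensor identification \eqref{tensorf}, so that $\chi(X\tpl)$ really evaluates to the stated product.
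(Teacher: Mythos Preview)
Your proof is correct and follows essentially the same route as the paper: regard the moment as an element of $\doix{\ell m}$ via the identifications in \refSS{SStensor}, then apply point evaluations, which are continuous linear functionals and hence commute with the Bochner/Pettis expectation, to obtain \eqref{moment}. The paper's version is terser but identical in substance.
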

In other words, the injective or projective 
Bochner or Pettis $\ell$-th moment (when it exists)
is the function describing all
mixed $\ell$-th moments of $X(t)$, $t\in\oi^m$.

\begin{proof}
As seen before the theorem, the moment can be regarded as a function in
\doix{\ell m}. 
  Since point evaluations are continuous linear functionals on
$\doix{\ell m}$, it follows that 
\begin{equation}\label{moment00}
  \E [X\tpx\ell](t_1,\dots,t_\ell)
=   \E [X\tpx\ell(t_1,\dots,t_\ell)]
=\E\lrpar{\prodil X(t_i)}, 
\end{equation}
showing \eqref{moment}.
\end{proof}

\begin{remark}\label{Rmoment}
\refT{Tmoment} does not hold for  Dunford moments, since a Dunford
  integral in general is an element of the bidual,
see Examples \ref{Edunford1}--\ref{Edunford2}.
However, we show a related result in \refT{Tdunford}, where we consider the
moment function \eqref{moment} for arguments in $\him$ and not just in $\oi^m$.
\end{remark}

\subsection{Existence of moments}
Since the Bochner and Pettis moments are given by \eqref{moment} when they
exist, the main problem is thus whether the different moments exist or not
for a given random random $X\in\doim$.
We give some conditions for existence, all generalizing
results in \cite{SJ271} for the case $m=1$.

For Bochner moments, we have a simple necessary and sufficient condition,
valid for both projective and injective moments.

\begin{theorem}\label{TBochner}
Let $X$ be a \dmeas{} \doim-valued \rv.
Then the following are equivalent.
\begin{romenumerate}
\item \label{TBochnerp}
The projective moment $\E X\ptpl$ exists in Bochner sense.
\item \label{TBochneri}
The injective moment $\E X\itpl$ exists in Bochner sense.
\item  \label{TBochner-0}
$\E \norm{X}^\ell<\infty$ and
there exist (non-random) countable subsets
$\NA_1,\dots,\NA_m$ of\/ $\oi$ 
such that for every $i\le m$,
\as{} $\gD_i X(t)=0$ for all $t=(t_1,\dots,t_m)$
with $t_i\notin \NA_i$.
\end{romenumerate}
\end{theorem}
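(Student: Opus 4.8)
The plan is to prove the cycle of implications
$\ref{TBochnerp}\implies\ref{TBochneri}\implies\ref{TBochner-0}\implies\ref{TBochnerp}$,
using the general characterization of Bochner integrability together with the structural results already established, notably \refT{Tassep} and \refL{LM1}. Recall the standard fact that a random variable $Y$ in a Banach space $B$ has a Bochner integral if and only if $Y$ is \assep{} (strongly measurable) and $\E\norm Y<\infty$. The implication $\ref{TBochnerp}\implies\ref{TBochneri}$ is immediate, since the canonical map $\doim\ptpl\to\doim\itpl$ is continuous (it has the \ap) and so carries a Bochner integral to a Bochner integral, with $\norm{X\itpl}\le\norm{X\ptpl}$; thus existence in projective sense forces existence in injective sense.

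First I would treat $\ref{TBochneri}\implies\ref{TBochner-0}$. If $\E X\itpl$ exists in Bochner sense, then $X\itpl$ is \assep{} in $\doim\itpl=\doix{\ell m}$ and $\E\bignorm{X\itpl}=\E\norm X^\ell<\infty$ (the injective tensor norm of a pure tensor is the product of the norms, so $\bignorm{X\itpl}=\norm X^\ell$). The integrability condition $\E\norm X^\ell<\infty$ is thus half of \ref{TBochner-0}. For the discontinuity condition, I would apply the characterization of \assep{} random variables in $\doix{\ell m}$ from \refT{Tassep}: there are countable sets controlling the jumps $\gD_i$ of $X\itpl$ in each of the $\ell m$ coordinates. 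Since $X\itpl(\mx{t}{,})=\prod_i X(t_i)$, a jump in a coordinate belonging to the $j$-th block is governed by a jump of the corresponding factor $X(t_j)$, and by specializing the other arguments one reads off that $\gD_i X(t)=0$ a.s.\ whenever $t_i$ avoids the relevant countable set; this yields the sets $\NA_1,\dots,\NA_m$.

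The main work is $\ref{TBochner-0}\implies\ref{TBochnerp}$, where I must produce a genuine projective Bochner integral. Given $\E\norm X^\ell<\infty$ and the jump condition, \refT{Tassep} shows that $X$ itself is \assep{} in $\doim$. I would then invoke \refL{LM1} to write $X=\lim_N X_N$ with $X_N=\sum_{k=1}^N\bigotimes_{i=1}^m X_{N,k,i}$, where the factors $X_{N,k,i}$ are \dmeas{} \doi-valued random variables of the explicit form $a\etta_{[b,c)}$ or $a\etta_{\set1}$; because the jumps of $X$ are confined to the fixed countable sets $\NA_i$, the argument of \refT{Tassep} (via Lemmas~\ref{LM5} and \ref{LM6}) forces the breakpoints $b,c$ and the constants into a fixed separable family. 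Consequently $X\ptpl=\lim_N X_N\ptpl$ lies a.s.\ in a fixed separable subspace of $\doim\ptpl$, so $X\ptpl$ is \assep{} (strongly measurable) there. Combined with $\E\bignorm{X\ptpl}_\pi=\E\norm X^\ell<\infty$ (the projective norm of a pure tensor equals the product of the norms), the Bochner criterion gives existence of $\E X\ptpl$.

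The step I expect to be the delicate one is $\ref{TBochner-0}\implies\ref{TBochnerp}$: injective strong measurability does not automatically upgrade to projective strong measurability, since the projective norm is strictly stronger, so I cannot simply transport \assep{}-ness along the inclusion $\doim\ptpl\to\doim\itpl$ (that map goes the wrong way). The hard part is therefore to verify that the \emph{projective} tensor power remains confined to a separable subspace; this is exactly where the explicit, measurable, finitely-supported approximation of \refL{LM1} and the fixed-jump-location conclusion of \refT{Tassep} must be used together, ensuring that all the building blocks $\bigotimes_i h_i$ (indicators with endpoints in the countable sets $\tNA_i$) range over a countable set, whose closed span in the projective norm is separable and a.s.\ contains every $X\ptpl$.
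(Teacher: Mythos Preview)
Your argument is correct, but the paper's proof is a two-line reduction: by \refT{Tassep}, condition \ref{TBochner-0} is equivalent to ``$\E\norm X^\ell<\infty$ and $X$ is \assep{} in $\doim$'', and then the equivalence with \ref{TBochnerp} and \ref{TBochneri} is quoted directly from \cite[Theorem~3.8]{SJ271}, valid because $X$ is weakly measurable (\refC{CM3}). Your cycle reproves that cited result in this particular case.

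Two simplifications of your route are worth noting. For \ref{TBochner-0}$\implies$\ref{TBochnerp} you do not need \refL{LM1}: once \refT{Tassep} gives a separable $B_1\subseteq\doim$ with $X\in B_1$ a.s., the pure tensor $X\tpl$ lies a.s.\ in the (separable) closure of the image of $B_1\ptpl$ in $\doim\ptpl$, since projective tensor products of separable spaces are separable; together with weak measurability of $X\tpl$ (\refT{TM}) and Pettis's measurability theorem, this already gives strong measurability. For \ref{TBochneri}$\implies$\ref{TBochner-0}, your specialization argument is fine but should be stated with the quantifiers in the right order: on the a.s.\ event from \refT{Tassep} applied to $X\itpl$, if $X(\go)\not\equiv0$ choose $t^*$ with $X(\go,t^*)\neq0$ and set the other blocks equal to $t^*$; this forces $\gD_i X(\go,t)=0$ for all $t$ with $t_i$ outside the countable set, simultaneously.
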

\begin{proof}
By \refT{Tassep}, \ref{TBochner-0} is 
    equivalent to $\E\norm{X}^\ell<\infty$ and $X$ \assep.
The equivalence now follow
by \cite[Theorem 3.8]{SJ271}, 
since  $X$  is weakly measurable by \refC{CM3}.
\end{proof}

Unfortunately, the condition in \refT{TBochner}\ref{TBochner-0} 
shows that Bochner moments do not exist in
many applications, \cf{} \refR{Rassep}.
Hence the Pettis moments are more useful
for applications; the following theorem
gives a simple and widely applicable sufficient condition for their existence.
\begin{theorem}\label{TD1kp}
Let $X$ be a \dmeas{} \doim-valued \rv, and suppose that 
$\E \norm{X}^\ell<\infty$. 
Then the projective moment $\E X\ptpl$ 
and  injective moment $\E X\itpl$
exist in Pettis sense.
\end{theorem}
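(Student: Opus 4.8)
The plan is to prove the statement for the projective moment; the injective moment then follows, since the canonical continuous map $\doim\ptpl\to\doim\itpl$ sends a Pettis integral to a Pettis integral, which is exactly the implication projective $\implies$ injective recorded above. So set $B:=\doim\ptpl$ and let $Y:=X\tpl:\gO\to B$ be the map $\go\mapsto X(\go)\tpl$; the goal is to show that $Y$ is Pettis integrable.

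I would invoke the general Banach-space criterion from \cite{SJ271}: a weakly measurable, \wassep{} function $Y$ with $\E\norm{Y}<\infty$ is Pettis integrable. (The finite-norm bound makes the family $\set{\chi(Y):\norm{\chi}\le1}$ uniformly integrable, being dominated by the single integrable function $\norm{Y}$; and \wassepy{} is what forces the Dunford integral to lie in $B$ itself rather than merely in $B\qx$.) It then remains to verify the three hypotheses.

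First, $\E\norm{Y}<\infty$: in the projective tensor power one has $\normp{f\tpl}=\norm{f}^\ell$, so $\norm{Y}=\norm{X}^\ell$ and hence $\E\norm{Y}=\E\norm{X}^\ell<\infty$ by assumption. Secondly, weak measurability: the dual $B\q$ is the space of bounded $\ell$-linear forms on $\doim$, a functional $\chi$ acting on elementary tensors by $\chi(f_1\tensor\dotsm\tensor f_\ell)=\gY(f_1,\dots,f_\ell)$ for the corresponding form $\gY$; hence $\chi(Y(\go))=\gY(X(\go),\dots,X(\go))$. By \refT{TM} the form $\gY$ is measurable on $(\doim)^\ell$, and $\go\mapsto(X(\go),\dots,X(\go))$ is measurable into this product (each coordinate being the \dmeas{} map $X$), so the composition $\chi(Y)$ is measurable for every $\chi\in B\q$. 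Thirdly, \wassepy{} of $Y=X\tpl$ in $B$ is precisely \refT{TDwassep}\ref{TDwassep2} in the case $X_1=\dots=X_\ell=X$.

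The main obstacle, and the reason the earlier machinery is set up as it is, is the weak-measurability step: one must identify $B\q$ with bounded $\ell$-linear forms and then know that all such forms are measurable, which is exactly the content of \refT{TM}; the special case $\ell=1$ (\refC{CM3}) would not suffice. Granting the cited Pettis criterion, the rest is bookkeeping, the one delicate point being that \wassepy{} is what keeps the vector integral inside $B$, which is precisely what distinguishes the Pettis moment from the Dunford moment (\cf{} \refR{Rmoment}).
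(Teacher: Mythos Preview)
Your proof is correct and follows essentially the same route as the paper: identify the dual of $\doim\ptpl$ with bounded $\ell$-linear forms, use \refT{TM} for weak measurability, use \refT{TDwassep} for \wassepy, and invoke Huff's Pettis-integrability criterion (the paper phrases the integrability hypothesis directly as uniform integrability of $\set{\innprod{\ga,X\tpl}:\norm{\ga}\le1}$, which your bound $|\innprod{\ga,X\tpl}|\le\norm{\ga}\norm{X}^\ell$ with $\E\norm{X}^\ell<\infty$ gives immediately). The passage to the injective moment via the continuous map $\doim\ptpl\to\doim\itpl$ is also the same.
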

\begin{proof}
Recall that a bounded linear functional $\ga$ on $\doim\ptpl$ is the same as a
bounded $\ell$-linear form  $\ga:\doim^\ell\to\bbR$.
By \refT{TM}, $\innprod{\ga,X\tpl}=\ga(X,\dots,X)$ is \meas.
Furthermore,
\begin{equation}\label{gal}
\bigabs{  \innprod{\ga,X\tpl}}
\le\norm\ga \norm{X}^\ell,
\end{equation}
and it follows  that the family
$\bigset{\innprod{\ga,X\ptpl}:\norm\ga\le1}$ is \ui.
Moreover, $X\ptpl$ is  \wassep{} in $\doim\ptpl$ by \refT{TDwassep}.
Hence a theorem by Huff \cite{Huff}, 
see also \cite[Theorem 2.23 and Remark 2.24]{SJ271}, 
shows that $\E X\ptpl$ exists in Pettis sense. 

Since the natural inclusion $\doim\ptpl\to\doim\itpl$ is continuous,
the injective moment $\E X\itpl$ too exists in Pettis sense. 
\end{proof}

For injective moments, we can weaken the condition in \refT{TD1kp}, and
obtain a necessary and sufficient condition; there is also a corresponding
result for Dunford moments. 

\begin{theorem}\label{TD1}
Suppose that $X$ is a $\cD$-\meas{}  $\doim$-valued \rv, and
let $\ell\ge1$.
  \begin{romenumerate}[-10pt]
  \item \label{td1d}
$\E X\itpl$ exists in Dunford sense $\iff$ 
$\sup_{t\in \oi^m}\E|X(t)|^\ell<\infty$.
  \item \label{td1p}
$\E X\itpl$ exists in Pettis sense $\iff$ 
the family
$\set{|X(t)|^\ell:t\in \oi^m}$ of \rv{s} is \ui.
  \end{romenumerate}
\end{theorem}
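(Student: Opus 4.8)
The plan is to work throughout with the isometric identification $\doim\itpl=\doix{\ell m}=C(\hI^{\ell m})$ from \refSS{SStensor}, under which $X\itpl$ becomes the $C(\hI^{\ell m})$-valued random variable $(s_1,\dots,s_\ell)\mapsto\prodil X(s_i)$ (with $s_i\in\hI^m$), and whose dual is $\MBa(\hI^{\ell m})$ by the Riesz representation theorem \refP{Priesz}. Two facts feed both parts: $X\itpl$ is weakly measurable by \refT{TM} (a bounded linear functional on $\doim\itpl$ is exactly a bounded $\ell$-linear form on $\doim$), and it is \wassep{} by \refT{TDwassep}. I would also record at the outset that, by the same Fatou argument as in \refL{Lfatou}, for any continuous increasing $g\ge0$ one has $\sup_{s\in\hI^m}\E g(|X(s)|)=\sup_{t\in\oi^m}\E g(|X(t)|)$, since each $s\in\hI^m$ is a limit of points of $\oi^m$ along which $X$ is continuous; this lets me pass freely between $\oi^m$ and $\hI^m$.

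For \ref{td1d}, recall that $X\itpl$ has a Dunford moment iff $\innprod{\mu,X\itpl}\in L^1$ for every $\mu\in\MBa(\hI^{\ell m})$. For sufficiency, assuming $\sup_t\E|X(t)|^\ell=C<\infty$, I would estimate $\E\bigabs{\innprod{\mu,X\itpl}}\le\E\int_{\hilm}|X\itpl|\dd|\mu|$ and interchange the expectation and the integral by the Fubini theorem \refT{Tfubini}\ref{Tfubini+}, applied in dimension $\ell m$ to the nonnegative random variable $|X\itpl|\in C(\hI^{\ell m})$ and the positive measure $|\mu|$; the resulting inner expectation $\E\prodil|X(s_i)|$ is at most $C$ by \Holder's inequality and the Fatou extension above, so the bound is $C\norm\mu<\infty$. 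For necessity, Dunford integrability together with weak measurability forces, by the closed graph theorem, the operator $\mu\mapsto\innprod{\mu,X\itpl}$ from $\MBa(\hI^{\ell m})$ into $L^1$ to be bounded, with norm $C'$ say; testing against the unit-norm point mass $\delta_{(t,\dots,t)}$ gives $\E|X(t)|^\ell=\E\bigabs{\innprod{\delta_{(t,\dots,t)},X\itpl}}\le C'$, uniformly in $t$.

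For \ref{td1p}, since $X\itpl$ is weakly measurable and \wassep, Huff's theorem (\cite{Huff}, \cite[Theorem 2.23 and Remark 2.24]{SJ271}) characterizes Pettis integrability of $X\itpl$ by uniform integrability of the unit-ball family $\set{\innprod{\mu,X\itpl}:\norm\mu\le1}$. It therefore suffices to show that this family is \ui{} iff $\set{|X(t)|^\ell:t\in\oi^m}$ is. One direction is immediate by testing against the unit point masses $\delta_{(t,\dots,t)}$, as above. For the other, I would invoke de la Vallée-Poussin to produce a convex increasing $\Phi$ with $\Phi(0)=0$ and $\Phi(x)/x\to\infty$ such that $\sup_t\E\Phi(|X(t)|^\ell)<\infty$; then, using $\prodil|X(s_i)|\le\tfrac1\ell\sum_{i=1}^\ell|X(s_i)|^\ell$ together with Jensen's inequality for the subprobability measure $|\mu|$ (legitimate because $\Phi$ is convex, $\Phi(0)=0$, and $\norm\mu\le1$) and Fubini (\refT{Tfubini}\ref{Tfubini+}) once more, I would bound $\sup_{\norm\mu\le1}\E\Phi\bigpar{\bigabs{\innprod{\mu,X\itpl}}}$ by $\sup_{s\in\hI^m}\E\Phi(|X(s)|^\ell)<\infty$, which re-establishes uniform integrability by de la Vallée-Poussin.

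I expect the main obstacle to be the repeated interchange of $\E$ with $\int_{\hilm}\cdots\dd\mu$: this is precisely the joint-measurability failure flagged in \refS{SFubini}, so it must be routed through \refT{Tfubini} rather than a naive application of Fubini's theorem. The secondary delicate point is the Pettis sufficiency estimate, where the AM--GM bound, Jensen's inequality for a measure of total mass $\le1$, and the Fatou passage from $\oi^m$ to $\hI^m$ must be combined so that the de la Vallée-Poussin control function $\Phi$ is preserved uniformly over $\set{\mu:\norm\mu\le1}$.
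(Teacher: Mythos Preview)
Your argument is correct, and it follows the same overall architecture as the paper: weak measurability via \refT{TM}, weak a.s.\ separability via \refT{TDwassep}, and then a characterisation of Dunford/Pettis integrability of $X\itpl$ in terms of $L^1$-boundedness resp.\ uniform integrability of $\{\innprod{\chi,X\itpl}:\norm\chi\le1\}$. The differences are in how that last step is handled. First, the paper quotes \cite[Theorems 3.11 and 3.20]{SJ271}, which already reduce the injective moment conditions to the family $\{|\chi(X)|^\ell:\chi\in\doim^*,\norm\chi\le1\}$ indexed by functionals on $\doim$ rather than on $\doim\itpl$; it then proves \refL{LP2} relating this family to point evaluations. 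You instead work directly with $\mu\in\MBa(\hilm)=(\doim\itpl)^*$ and never need that reduction, which makes your route a bit more self-contained. Second, and more interestingly, for the uniform-integrability transfer in \ref{td1p} the paper uses an ``indicator trick'': once one has the identity $\sup_{\norm\chi\le1}\E|\chi(X)|^\ell=\sup_t\E|X(t)|^\ell$ (\refL{LP2}\ref{LP2E}), one applies it to $\etta_E X$ for every event $E$ and reads off uniform integrability from the $\eps$--$\gd$ criterion. Your de la Vall\'ee--Poussin argument is a genuine alternative; it works, but it costs you Jensen's inequality for subprobability measures (needing $\Phi(0)=0$) plus the AM--GM step, whereas the paper's trick reuses the $L^1$ identity verbatim and is a one-liner. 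Both approaches route the crucial $\E\leftrightarrow\int$ interchange through \refT{Tfubini}, exactly as you anticipate.
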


We postpone the proof and show first two lemmas.

\begin{lemma}
  \label{LQ2}
Suppose that $X$ is a random element of $\doim$.
Then
\begin{equation}\label{ly}
  \sup_{\chi\in \doim^*,\;\norm{\chi}\le1}\E|\chi(X)| =
\sup_{t\in\oi^m}\E|X(t)|.
\end{equation}

\end{lemma}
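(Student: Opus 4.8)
The plan is to prove the two inequalities in \eqref{ly} separately, the trivial one being ``$\ge$'' and the substantive one ``$\le$''. Throughout I write $M:=\sup_{t\in\oi^m}\E|X(t)|$ for the right-hand side. Observe first that by \refC{CM1} every $\chi\in\doim^*$ is measurable, so $\chi(X)$ is a genuine random variable and $\E|\chi(X)|$ is well defined.

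For ``$\ge$'' I would simply use point evaluations. For each fixed $t\in\oi^m$ the map $\chi_t:f\mapsto f(t)$ is a continuous linear functional with $\norm{\chi_t}\le1$ (since $|f(t)|\le\norm f$), and $\chi_t(X)=X(t)$. Hence the supremum on the left of \eqref{ly} is at least $\E|\chi_t(X)|=\E|X(t)|$ for every $t$, and taking the supremum over $t\in\oi^m$ gives $\ge M$.

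For ``$\le$'' I may assume $M<\infty$, the case $M=\infty$ being trivial. Fix $\chi\in\doim^*$ with $\norm\chi\le1$. Since $\doim=C(\hI^m)$, the Riesz representation theorem \refP{Priesz} represents $\chi$ by a signed Baire measure $\mu\in\MBa(\him)$ with $\norm\mu=\norm\chi\le1$, so that $\chi(f)=\inthm f\dd\mu$ for $f\in\doim$. Applying this to $f=X(\go)\in\doim$ pointwise in $\go$ gives $\chi(X)=\inthm X(\xt)\dd\mu(\xt)$, whence
\begin{equation*}
  |\chi(X)|\le\inthm|X(\xt)|\dd|\mu|(\xt),
\end{equation*}
and after taking expectations it remains to bound $\E\inthm|X(\xt)|\dd|\mu|(\xt)$.

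The crucial step, and the one that requires the earlier machinery, is the interchange of $\E$ and $\inthm$: since $X(\go,\xt)$ is in general \emph{not} jointly measurable on $\gO\times\him$, this is precisely where \refT{Tfubini} is needed rather than a naive appeal to Fubini. I note that $|X|$ is again a \dmeas{} $\doim$-valued \rv{} (the map $f\mapsto|f|$ preserves $\doim$, and its point evaluations $|X(t)|$ are measurable), and apply \refT{Tfubini}\ref{Tfubini+} to the nonnegative random variable $|X|$ and the positive measure $|\mu|$ to obtain
\begin{equation*}
  \E\inthm|X(\xt)|\dd|\mu|(\xt)=\inthm\E|X(\xt)|\dd|\mu|(\xt).
\end{equation*}
Finally, by \refL{Lfatou}, in particular \eqref{fatou2}, we have $\E|X(\xt)|\le\sup_{\xt\in\him}\E|X(\xt)|=M$ for every $\xt\in\him$, so the right-hand side is at most $M\,|\mu|(\him)=M\norm\mu\le M$. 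Thus $\E|\chi(X)|\le M$ for every admissible $\chi$, which combined with ``$\ge$'' gives \eqref{ly}. The only real obstacle is the measurability/interchange step just described, and it is fully absorbed into \refT{Tfubini}.
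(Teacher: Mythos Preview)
Your proof is correct and follows essentially the same route as the paper: point evaluations for the trivial inequality, then Riesz representation together with \refT{Tfubini} (applied to $|X|$ and $|\mu|$) and \refL{Lfatou} for the substantive one. You are slightly more explicit than the paper in singling out part~\ref{Tfubini+} of \refT{Tfubini} and in verifying that $|X|$ is again a \dmeas{} $\doim$-valued random variable, but the argument is otherwise identical.
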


\begin{proof}
Denote the left and right sides of \eqref{ly} by $L$ and $R$, and note that
trivially $R\le L$ because
each point evaluation $X\mapsto X(t)$ is a
linear functional of norm 1.

For the converse, 
let $\chi\in\doim^*$ with $\norm{\chi}\le1$.
Note that  the measurability of $\chi(X)$ follows from \refC{CM1}.
By the Riesz representation theorem (\refP{Priesz}), 
there exists a signed Baire measure
$\mu$ on $\hI^m$ with $\norm{\mu}=\norm{\chi}\le1$
such that $\chi(f)=\int_{\hI^m}f\dd\mu$ for every
$f\in\doim$.
Consequently, applying
\refT{Tfubini} to
 $|X(\xt)|$ and $|\mu|$,
noting that
\refL{Lfatou} yields
$  \E |X(\xt)|\le R$
for every $\xt\in\him$,
\begin{equation*}
  \begin{split}
  \E|\chi(X)|
&=
\E\biggabs{\int_{\hI^m} X(\xt)\dd\mu(\xt)}
\le \E\int_{\hI^m} |X(\xt)|\dd|\mu|(\xt)
\\&
=\int_{\hI^m} \E| X(\xt)|\dd|\mu|(\xt)
\le R\, |\mu|(\hI^m)= R\norm{\mu}\le R.
  \end{split}
\end{equation*}
Hence $L\le R$, which completes the proof.
\end{proof}

We extend this to powers.

\begin{lemma}\label{LP2}
Suppose that $X$ is a \doim-valued \rv, and $\ell\ge1$.
\begin{romenumerate}
\item \label{LP2E}
Then
\begin{equation}\label{lp2e}
  \sup_{\chi\in \doim^*,\;\norm{\chi}\le1}\E|\chi(X)|^\ell =
\sup_{t\in\oi^m}\E|X(t)|^\ell.
\end{equation}
In particular,
the set
$  \bigset{|\chi(X)|^\ell:\chi\in \doim^*,\;\norm{\chi}\le1}$
of random variables
is a bounded subset of $L^1$ if and only if 
the set
$  \bigset{|X(t)|^\ell: t\in\oi^m}$ is.
\item \label{LP2ui}
The set
$  \bigset{|\chi(X)|^\ell:\chi\in \doim^*,\;\norm{\chi}\le1}$
is \ui{} if and only if 
the set
$  \bigset{|X(t)|^\ell: t\in\oi^m}$ is.
\end{romenumerate}
\end{lemma}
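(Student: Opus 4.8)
The plan is to push the $\ell=1$ argument of \refL{LQ2} through to general $\ell$, the only genuinely new ingredient being Jensen's inequality to absorb the power. Throughout I represent a functional $\chi\in\doim^*$ with $\norm{\chi}\le1$ by a signed Baire measure $\mu$ on $\him$ via the Riesz representation theorem (\refP{Priesz}), so that $\chi(f)=\inthm f\dd\mu$ and $\norm{\mu}=\norm{\chi}\le1$; measurability of $\chi(X)$ is guaranteed by \refC{CM1}. Write the polar decomposition $\dd\mu=h\dd|\mu|$ with $|h|=1$, and (the case $\chi=0$ being trivial) set $P:=|\mu|/\norm{\mu}$, a Baire \emph{probability} measure on $\him$. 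Two facts will be used repeatedly: (a) \refT{Tfubini}\ref{Tfubini+}, applied to a nonnegative $\doim$-valued random variable $Y$ and the positive measure $P$, gives $\E\inthm Y(\xt)\dd P(\xt)=\inthm\E[Y(\xt)]\dd P(\xt)$; and (b) \refL{Lfatou}, applied to such a $Y\ge0$, gives $\E[Y(\xt)]\le\sup_{t\in\oi^m}\E[Y(t)]$ for every $\xt\in\him$. The relevant choices are $Y=|X|^\ell$ and $Y=G(|X|^\ell)$, and one checks these are $\doim$-valued random variables: continuity in $\xt$ comes from the identification $\doim=\chIm$, and measurability in $\go$ from continuity of $x\mapsto x^\ell$ and of the convex function $G$.

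For \ref{LP2E}, the inequality $R\le L$ (with $L,R$ the left and right sides of \eqref{lp2e}) is immediate because each point evaluation is a norm-one functional. For $L\le R$ I argue pointwise in $\go$: since $|h|=1$,
\begin{equation*}
\bigabs{\chi(X)}=\biggabs{\inthm X(\xt)h(\xt)\dd|\mu|(\xt)}
\le\inthm|X(\xt)|\dd|\mu|(\xt)=\norm{\mu}\inthm|X(\xt)|\dd P(\xt),
\end{equation*}
so, by Jensen's inequality for the convex function $x\mapsto x^\ell$ and the probability measure $P$, and since $\norm{\mu}^\ell\le1$,
\begin{equation*}
\bigabs{\chi(X)}^\ell\le\norm{\mu}^\ell\inthm|X(\xt)|^\ell\dd P(\xt)\le\inthm|X(\xt)|^\ell\dd P(\xt).
\end{equation*}
Taking expectations, interchanging by (a) with $Y=|X|^\ell$, and bounding $\E|X(\xt)|^\ell\le R$ by (b), I obtain $\E|\chi(X)|^\ell\le R$, hence $L\le R$ and equality in \eqref{lp2e}. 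The ``in particular'' statement then follows at once, since \eqref{lp2e} equates the two suprema, so one is finite exactly when the other is.

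For \ref{LP2ui}, one direction is trivial: $\set{|X(t)|^\ell:t\in\oi^m}$ is a subfamily of $\set{|\chi(X)|^\ell:\norm{\chi}\le1}$, so \ui{} of the latter forces \ui{} of the former. For the converse I would invoke the de la Vallée--Poussin criterion (see \eg{} \cite{Bogachev}): \ui{} of $\set{|X(t)|^\ell:t\in\oi^m}$ yields a nondecreasing convex $G:\ooo\to\ooo$ with $G(x)/x\to\infty$ and $\sup_{t\in\oi^m}\E G(|X(t)|^\ell)<\infty$. The estimate above gives $|\chi(X)|^\ell\le\inthm|X(\xt)|^\ell\dd P(\xt)$ pointwise, whence, using that $G$ is nondecreasing and convex together with Jensen's inequality,
\begin{equation*}
G\bigpar{|\chi(X)|^\ell}\le\inthm G\bigpar{|X(\xt)|^\ell}\dd P(\xt).
\end{equation*}
Taking expectations and applying (a) and (b) with $Y=G(|X|^\ell)$ gives $\E G(|\chi(X)|^\ell)\le\sup_{t\in\oi^m}\E G(|X(t)|^\ell)<\infty$, uniformly over $\norm{\chi}\le1$; the sufficiency half of the de la Vallée--Poussin criterion then yields \ui{} of $\set{|\chi(X)|^\ell:\norm{\chi}\le1}$.

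The main obstacle, exactly as for $m=1$, is that $X(\go,\xt)$ is \emph{not} jointly measurable on $\gO\times\him$, so the interchange of $\E$ with $\inthm\cdots\dd P$ cannot be justified by classical Fubini; this is precisely what \refT{Tfubini} is designed to circumvent, and the only point needing care is verifying that the integrands $|X|^\ell$ and $G(|X|^\ell)$ really are $\doim$-valued random variables, so that \refT{Tfubini} and \refL{Lfatou} apply. Everything else --- the polar decomposition, the two uses of Jensen's inequality, and the appeal to the de la Vallée--Poussin criterion --- is routine.
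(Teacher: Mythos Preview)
Your proof is correct but takes a different route from the paper's on both parts.

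For \ref{LP2E}, the paper does \emph{not} use Jensen. Instead it reduces to the case $\ell=1$ (\refL{LQ2}) by passing to the tensor power: it forms $\chi\tpl\in(\doim\itpl)^*=(\doix{\ell m})^*$, applies \refL{LQ2} to $X\tpl\in\doix{\ell m}$ to get $\E|\chi(X)|^\ell=\E|\chi\tpl(X\tpl)|\le\sup_{t\in\oi^{\ell m}}\E|X\tpl(t)|$, and then bounds the latter by $\sup_{t\in\oi^m}\E|X(t)|^\ell$ via \Holder. Your argument --- normalising $|\mu|$ to a probability measure and invoking Jensen pointwise before applying \refT{Tfubini} to $|X|^\ell$ --- bypasses the tensor machinery entirely and is more self-contained; the price is the small check that $|X|^\ell$ (and later $G(|X|^\ell)$) is again a $\doim$-valued \rv, which you correctly flag.

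For \ref{LP2ui}, the paper's proof is a one-liner once \ref{LP2E} is in hand: it applies \eqref{lp2e} to $\etta_E X$ for an arbitrary event $E$, giving $\sup_{\norm\chi\le1}\E\bigpar{\etta_E|\chi(X)|^\ell}=\sup_t\E\bigpar{\etta_E|X(t)|^\ell}$, and then invokes the characterisation of uniform integrability via smallness on sets of small probability. Your de~la~Vall\'ee--Poussin argument is perfectly valid but longer, since it essentially reruns the Jensen--Fubini estimate with $G$ in place of $x\mapsto x^\ell$; the paper's trick of multiplying by $\etta_E$ avoids introducing the auxiliary $G$ altogether.
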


\begin{proof}
\pfitemref{LP2E}
  Let $\chi\in \doim^*$ with $\norm{\chi}\le1$. 
Thus $\chi:\doim\to\bbR$ is a linear map, and we can take its tensor power
$\chi\tpl:\doim\itpl\to\bbR\itpl=\bbR$,
which is defined by
\begin{equation}
 \chi\tpl(f_1\tensor\dots\tensor f_\ell)=\prodil \chi(f_i) 
\end{equation}
together with linearity and continuity;
 $\chi\tpl$ is a linear functional on $\doim\itpl$ with norm
$\norm{\chi\tpl}=\norm{\chi}^\ell\le1$.

Recalling that $\doim\itpl=\doix{\ell m}$, we apply \refL{LQ2} to
$X\tpl\in\doix{\ell m}$ and the linear functional $\chi\tpl$ and obtain
\begin{equation}\label{kk}
\E|\chi(X)|^\ell=
  \E|\chi\tpl(X\tpl)|\le \sup_{t\in\oi^{\ell m}}\E |X\tpl(t)|. 
\end{equation}
Furthermore, if $t\in\oi^{\ell m}$, write $t=(t_1,\dots,t_\ell)$ with
$t_i\in\oi^m$; then by \Holder's inequality
\begin{equation}\label{kl}
  \E|X\tpl(t)|
=\E|X(t_1)\dotsm X(t_\ell)|
\le \prodil \bigpar{\E|X(t_i)|^\ell}^{1/\ell}
\le\sup_{t\in\oi^m}\E|X(t)|^\ell.
\end{equation}
Combining \eqref{kk} and \eqref{kl}, we see that the \lhs{} of \eqref{lp2e}
is at most equal to the \rhs.
The converse follows again because each point evaluation $X\mapsto X(t)$ is a
linear functional of norm 1.

\pfitemref{LP2ui}
Let $E\in\cF$ be an arbitrary event in the \ps{} $\ofp$, and apply
\eqref{lp2e} to
the random function $\etta_EX$. This yields
\begin{equation}
  \sup_{\chi\in \doim^*,\;\norm{\chi}\le1}\E\bigpar{\etta_E|\chi(X)|^\ell} 
= \sup_{t\in\oi^m}\E\bigpar{\etta_E|X(t)|^\ell}.
\end{equation}
The result follows, since a collection $\set{\xi_\ga}$
of random variables is \ui{} if and
only if it is bounded in $L^1$ and 
$\sup_{\P(E)<\gd}\sup_\ga\E|\etta_E\xi_\ga|\to0$ as $\gd\to0$
\cite[Theorem 5.4.1]{Gut}.
\end{proof}

\begin{proof}[Proof of \refT{TD1}]
\pfitemref{td1d}
$X$ is weakly measurable by \refC{CM3}, and \wassep{} by \refT{TDwassep}.  
Hence \cite[Theorem 3.11]{SJ271} shows that the injective Dunford moment
exists if and only if 
$  \bigset{|\chi(X)|^\ell:\chi\in \doim^*,\;\norm{\chi}\le1}$ is a bounded
subset of $L^1$.
The proof is completed by \refL{LP2}\ref{LP2E}.
\pfitemref{td1p}
Similar, using \cite[Theorem 3.20]{SJ271} and \refL{LP2}\ref{LP2ui}.
\end{proof}

For $\ell=1$, there is as said above no difference between projective and
injective moments.
For $\ell=2$, the projective and injective moments are expectations taken in
different spaces; nevertheless, the  projective moments exist if and only if
the injective moments do. For Bochner moments, this was shown in
\refT{TBochner} (for any $\ell$); for Pettis and Dunford moments this is
shown in the next theorem. This theorem does not hold for $\ell\ge3$; see
the counterexample in \cite[Example 7.27]{SJ271} (which is defined in $C(K)$
for another compact space $K$, but can be embedded in $\coi\subset\doi$).

\begin{theorem}\label{TDG}
Let $X$ be a \dmeas{} \doim-valued \rv.
  \begin{romenumerate}[-10pt]
  \item \label{tdgd}
$\E X\ptpx2$ exists in Dunford sense 
$\iff$
$\E X\itpx2$ exists in Dunford sense 
$\iff$
$\sup_{t\in	\oi^m}\E|X(t)|^2<\infty$.
  \item \label{tdgp}
$\E X\ptpx2$ exists in Pettis sense $\iff$ 
$\E X\itpx2$ exists in Pettis sense $\iff$ 
the family
$\set{|X(t)|^2:t\in \oi^m}$ of \rv{s} is \ui.
  \end{romenumerate}
\end{theorem}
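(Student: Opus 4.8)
My plan is to reduce everything to the injective case, which is already settled by \refT{TD1}, plus one nontrivial input. The injective equivalences in \ref{tdgd} and \ref{tdgp} are exactly the case $\ell=2$ of \refT{TD1}. Since the canonical inclusion $\doim\ptpx2\to\doim\itpx2$ is continuous, and a bounded operator carries a Dunford (resp.\ Pettis) integrable function to one of the same type, existence of the projective moment $\E X\ptpx2$ always implies existence of the injective moment $\E X\itpx2$. Hence it remains only to prove the reverse implications: that $\sup_{t\in\oi^m}\E\abs{X(t)}^2<\infty$ forces $\E X\ptpx2$ to exist in Dunford sense, and that uniform integrability of $\set{\abs{X(t)}^2:t\in\oi^m}$ forces it to exist in Pettis sense.

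The key point is to control $\innprod{\ga,X\ptpx2}=\ga(X,X)$ uniformly over the unit ball of $(\doim\ptpx2)^*$, which is the space of bounded bilinear forms $\ga:\doim\times\doim\to\bbR$; each such $\ga(X,X)$ is measurable by \refT{TM}. The main obstacle, and the reason the statement is restricted to $\ell=2$, is that this requires a factorization of bilinear forms with no analogue for $\ell\ge3$: Grothendieck's inequality. Since $\doim=C(\hI^m)$, there is an absolute constant $K_G$ such that every bounded bilinear form $\ga$ on $\doim\times\doim$ admits Baire probability measures $\mu_1,\mu_2$ on $\hI^m$ with
\begin{equation*}
\bigabs{\ga(f,g)}\le K_G\norm{\ga}\Bigpar{\inthm\abs{f}^2\dd\mu_1}^{1/2}\Bigpar{\inthm\abs{g}^2\dd\mu_2}^{1/2},
\qquad f,g\in\doim.
\end{equation*}
(Grothendieck's theorem yields regular Borel measures, but by \refR{Rriesz} these restrict to Baire measures with the same action on $C(\hI^m)$, which is what lets the Fubini theorem \refT{Tfubini} apply.)

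Granting this, I would take $f=g=X$ and, for an arbitrary event $E$, combine the pointwise bound with the \CSineq{} on $\gO$ (writing $\etta_E=\etta_E^{1/2}\etta_E^{1/2}$), the Fubini theorem \refT{Tfubini}, and \refL{Lfatou} to obtain
\begin{equation*}
\E\bigsqpar{\etta_E\bigabs{\ga(X,X)}}
\le K_G\norm{\ga}\prod_{j=1}^2\Bigpar{\inthm\E\bigsqpar{\etta_E\abs{X(\xt)}^2}\dd\mu_j(\xt)}^{1/2}
\le K_G\norm{\ga}\sup_{t\in\oi^m}\E\bigsqpar{\etta_E\abs{X(t)}^2}.
\end{equation*}
Here I use that $\etta_E\abs{X}^2$ is again a $\doim$-valued \rv{} (as $\doim\cong C(\hI^m)$ is a Banach algebra), so \refL{Lfatou} gives $\E\bigsqpar{\etta_E\abs{X(\xt)}^2}\le\sup_{t\in\oi^m}\E\bigsqpar{\etta_E\abs{X(t)}^2}$ for every $\xt\in\him$, together with the fact that each $\mu_j$ is a probability measure.

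Taking $E=\gO$ shows that $\sup_t\E\abs{X(t)}^2<\infty$ makes $\set{\ga(X,X):\norm\ga\le1}$ bounded in $L^1$; since $X\ptpx2$ is weakly measurable (\refT{TM}) and \wassep{} (\refT{TDwassep}), the Dunford criterion used in \refT{TD1} (\cite[Theorem 3.11]{SJ271}) then gives existence of $\E X\ptpx2$ in Dunford sense, proving \ref{tdgd}. For \ref{tdgp}, uniform integrability of $\set{\abs{X(t)}^2:t\in\oi^m}$ is exactly the statement that $\sup_t\E\bigsqpar{\etta_E\abs{X(t)}^2}\to0$ as $\P(E)\to0$, so the displayed bound shows that $\set{\ga(X,X):\norm\ga\le1}$ is \ui; together with weak measurability and weak \as{} separability of $X\ptpx2$, \cite[Theorem 3.20]{SJ271} (as in \refT{TD1}) yields existence in Pettis sense. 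Apart from Grothendieck's inequality, every step is a routine interchange of expectation and integration of the same kind already used in \refL{LP2} and \refT{TD1}.
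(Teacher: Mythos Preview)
Your argument is correct and follows essentially the same route as the paper: reduce to the injective case via \refT{TD1}, then close the loop using Grothendieck's inequality for $C(\hI^m)$. The paper uses the symmetric one-measure form $|\ga(f,f)|\le\kkg\norm{\ga}\int|f|^2\dd\nu$ (so no \CS{} step is needed) and then applies \refL{LQ2} to $|X|^2$, whereas you use the two-measure form and compensate with \CS{} plus \refT{Tfubini} and \refL{Lfatou}; these are equivalent ways to reach the same bound $\E\bigsqpar{\etta_E|\ga(X,X)|}\le C\norm{\ga}\sup_t\E\bigsqpar{\etta_E|X(t)|^2}$.

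One small bibliographic slip: the criteria you cite, \cite[Theorems 3.11 and 3.20]{SJ271}, are (as used in the proof of \refT{TD1}) statements about \emph{injective} moments, phrased in terms of $\chi\in\doim^*$. For the projective conclusion you have already done the hard work directly: you control $\innprod{\gYa,X\ptpx2}$ for every $\gYa$ in the dual of $\doim\ptpx2$. From there, Dunford existence is immediate from the definition (the paper cites \cite[Theorem~3.16]{SJ271}), and Pettis existence follows from Huff's theorem \cite{Huff} (equivalently \cite[Theorem~2.23]{SJ271}) using the \wassepy{} from \refT{TDwassep}, exactly as in the proof of \refT{TD1kp}. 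So the argument stands; just adjust the references.
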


\begin{proof}
The second equivalences in \ref{tdgd} and \ref{tdgp} are the case $\ell=2$
of \refT{TD1}.
Furthermore, the existence of a projective moment always implies the
existence of the corresponding injective moment.
Hence it suffices to show that in both parts, the final condition implies
the existence of the projective moment.

\pfitemref{tdgd}
Let $\ga$ be a bounded bilinear form on $\doim=C(\hI^m)$. By 
Grothen\-dieck's theorem \cite{Grothendieck-resume}, 
$\ga$ extends
to a bounded bilinear form on $L^2(\hI^m,\nu)$ for some Baire 
probability measure
$\nu$ on $\hI^m$; furthermore,
see \eg{} \cite[Theorem 7.20]{SJ271}, 
\begin{equation}\label{pingst}
|\ga(f,f)|
\le \kkg\norm{\ga}\int_{\hI^m}|f(t)|^2 \dd\nu(t),
\qquad f\in C(\hI^m),
\end{equation}
where $\kkg$ is a universal constant.
(In this version, $\kkg$ is at most
$2$ times the usual Grothendieck's constant, see \cite[Remark
7.21]{SJ271}.) 

Furthermore, applying \refL{LQ2}
to the random function $|X(t)|^2\in\doim$ yields 
\begin{equation}\label{gk}
\E\int_{\hI^m}|X(t)|^2\dd\nu \le \sup_{t\in\oi^m}\E|X(t)|^2.
\end{equation}
Consequently, by combining \eqref{pingst} and \eqref{gk},
\begin{equation}\label{tref}
\E|\ga(X,X)|
\le \kkg \norm\ga 
\E\int_{\hI^m}|X(t)|^2\dd\nu
 \le \kkg \norm\ga 
\sup_{t\in\oi^m}\E|X(t)|^2.
\end{equation}

If $\sup_{t\in\oi^m}\E|X(t)|^2<\infty$, then \eqref{tref} shows that
$\E|\ga(X,X)|<\infty$ for every bounded bilinear form $\ga$ on $\doim$,
which implies the existence of the projective Dunford moment $\E X\ptpx2$ by
\cite[Theorem 3.16]{SJ271}.

\pfitemref{tdgp}
Since the bounded linear functionals on $\doim\ptpx2$ are identified with
the bounded bilinear forms on $\doim$, 
\eqref{tref} means, equivalently, that for every $\gYa\in(\doim\ptpx2)^*$,
\begin{equation}\label{tref2}
\E|\gYa(X\ptpx2)|
\le \kkg \norm\gYa 
\sup_{t\in\oi^m}\E|X(t)|^2.
\end{equation}

Assume that the family
$\set{|X(t)|^2:t\in \oi^m}$ is \ui.
By applying \eqref{tref2} to $\etta_E X$ 
as in the proof of \refL{LP2}\ref{LP2ui},
we obtain 
that the family
$\bigset{\ga(X\ptpx2):\ga\in (\doim\ptpx2)^*,\,\norm\ga\le1}$ is \ui.
Moreover, $X\ptpx2$ is  \wassep{} by \refT{TDwassep}.
Hence $\E X\ptpx2$ exists in Pettis sense by Huff's theorem \cite{Huff}, 
cf.\ the proof of \refT{TD1kp}.
\end{proof}

\subsection{Dunford moments}

As said above, a Dunford moment in general is an element of the bidual of
the space, and thus \refT{Tmoment} does not hold for Dunford moments.
Examples \ref{Edunford1}--\ref{Edunford2} below illustrate this.
However, although even for $\ell=1$, the bidual $\doim^{**}$ is large and
unwieldy, it turns out that Dunford moments are always rather simple
elements of it, and that they have a representation as functions
generalising \refT{Tmoment}.
This extends to injective moments of any order $\ell$.

\begin{theorem}\label{Tdunford}
  Let $X$ be a \dmeas{} random variable in \doim{} 
and let $\ell\ge1$.
If $X$ has an injective Dunford moment $\E [X\itpl]$
then this
moment $\E[X\itpx\ell]$
is represented by the bounded Baire measurable function 
$\zeta$ on $\hI^{\ell m}$ defined by
\begin{equation}\label{Dmoment}
\zeta(\xt_1,\dots,\xt_\ell)
:=\E\lrpar{\prodil X(\xt_i)}, \qquad \xt_i\in \hI^m,
\end{equation}
in the sense that if $\chi$ is any continuous linear functional on
$\doim\itpl=\doix{\ell m}=C(\hilm)$ and $\chi$ is represented by a signed Baire
measure $\mu$ on $\hilm$, then
\begin{equation}\label{Dmoment2}
\innprod{\E[X\itpl],\chi}=\int_{\hilm} \zeta \dd\mu.
\end{equation}
In particular, if\/ $\zeta\in C(\hilm)$, then the Dunford moment\/ $\E[X\itpl]$
is the element $\zeta\in C(\hilm)=\doix{\ell m}$.
\end{theorem}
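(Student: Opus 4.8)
The plan is to reduce the whole statement to the Fubini theorem \refT{Tfubini}. First I would unwind the definition of the injective Dunford moment: $\E[X\itpl]$ is the element of the bidual $(\doim\itpl)\qx$ determined by $\innprod{\E[X\itpl],\chi}=\E[\chi(X\itpl)]$ for every continuous linear functional $\chi$ on $\doim\itpl$ (see \refApp{Aintegrals}). Using the identification $\doim\itpl=\doix{\ell m}=\chIlm$ from \refSS{SStensor}, I would regard $Y:=X\itpl$ as a $\doix{\ell m}$-valued \rv: for fixed $(t_1,\dots,t_\ell)\in\oi^{\ell m}$ its value $\prodil X(t_i)$ is a product of \rv{s}, hence $\cD_{\ell m}$-measurable, so $Y$ is a genuine $\doix{\ell m}$-valued random variable, and for each fixed $\go$ the function $Y(\go)\in\chIlm$ is $\xt=(\xt_1,\dots,\xt_\ell)\mapsto\prodil X(\go)(\xt_i)$ by the identification \eqref{tensorf}.

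Next I would represent $\chi$ by a signed Baire measure $\mu$ on $\hilm$ via the Riesz theorem \refP{Priesz}, so that $\chi(Y)=\int_{\hilm}\prodil X(\xt_i)\dd\mu(\xt)$ and therefore $\innprod{\E[X\itpl],\chi}=\E\int_{\hilm}Y(\xt)\dd\mu(\xt)$. The goal is then exactly to interchange $\E$ and $\int\dd\mu$. To license this I would supply the integrability bound: since $\E[X\itpl]$ exists in Dunford sense, \refT{TD1}\ref{td1d} gives $C:=\sup_{t\in\oi^m}\E|X(t)|^\ell<\infty$, and then \Holder's inequality, precisely as in \eqref{kl}, yields $\E|Y(t)|=\E\bigabs{\prodil X(t_i)}\le C$ for every $t\in\oi^{\ell m}$.

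With this bound in hand $Y$ satisfies the hypothesis of \refT{Tfubini}\ref{TfubiniB}, which does the real work: it shows that $\zeta(\xt):=\E[Y(\xt)]=\E\bigpar{\prodil X(\xt_i)}$ is a bounded Baire measurable function on $\hilm$ (so \eqref{Dmoment} is well defined, using also \refL{Lfatou} applied to $Y$ to extend the bound from $\oi^{\ell m}$ to $\hilm$), and it gives the Fubini identity $\E\int_{\hilm}Y\dd\mu=\int_{\hilm}\E[Y(\xt)]\dd\mu=\int_{\hilm}\zeta\dd\mu$. Combining with the previous display yields $\innprod{\E[X\itpl],\chi}=\int_{\hilm}\zeta\dd\mu$, which is \eqref{Dmoment2}. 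For the last assertion, if $\zeta\in C(\hilm)$ then \refP{Priesz} identifies $\int_{\hilm}\zeta\dd\mu$ with $\chi(\zeta)$, so $\innprod{\E[X\itpl],\chi}=\chi(\zeta)$ for all $\chi$; hence $\E[X\itpl]$ is the canonical image of $\zeta$ in the bidual, i.e.\ the Dunford moment is the element $\zeta\in\chIlm=\doix{\ell m}$.

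The only genuinely hard point, namely the interchange of expectation and integration against $\mu$ in the face of the failure of joint measurability on $\gO\times\hilm$, has already been settled in \refT{Tfubini}; the present argument is essentially bookkeeping: verifying the uniform $L^1$-bound on $Y$ via \Holder{} and \refT{TD1}, and unwinding the definitions of the Dunford integral and the Riesz representation. The one point to keep straight is that $Y=X\itpl$ must be handled as an honest $\doix{\ell m}$-valued random variable, so that \refT{Tfubini} and \refL{Lfatou} apply verbatim.
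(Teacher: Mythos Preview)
Your proposal is correct and follows essentially the same route as the paper: reduce to the Fubini theorem \refT{Tfubini}\ref{TfubiniB} applied to $Y=X\itpl\in\doix{\ell m}$, after supplying the uniform $L^1$ bound on $Y$. The only cosmetic difference is that the paper obtains this bound by first reducing to $\ell=1$ (so that the hypothesis becomes existence of the Dunford mean of $Y$, and \refT{TD1}\ref{td1d} with $\ell=1$ gives $\sup_{t\in\oi^{\ell m}}\E|Y(t)|<\infty$ directly), whereas you apply \refT{TD1}\ref{td1d} at the original $\ell$ to get $\sup_t\E|X(t)|^\ell<\infty$ and then use \Holder{} as in \eqref{kl}; both arrive at the same bound, and your explicit \Holder{} step is harmless but unnecessary.
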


\begin{proof}
  Again, by considering the random variable 
$X\itpl\in\doim\itpl=\doix{\ell m}$, it suffices to consider the case
$\ell=1$.

In the case $\ell=1$,  the assumption says that $\E[X]$ exists
as a Dunford moment; by \refT{TD1}, this implies that
$\sup_{t\in\oi^m}\E|X(t)|<\infty$.
It follows from \refT{Tfubini}\ref{TfubiniB} that $\zeta(\xt):=\E [X(\xt)]$
is a bounded Baire measurable function on $\hI^m$, and that for any
continuous linear functional $\chi\in\doim^*$,
represented by a signed Baire measure $\mu\in\MBa(\him)$,
\begin{equation}
\E \innprod{\chi,X}
= \E \inthm X(\xt)\dd\mu(\xt)
=
\inthm \E[X(\xt)]\dd\mu(\xt)
=\inthm \zeta(\xt) \dd\mu(\xt).  
\end{equation}
This shows, cf.\ \refD{Ddunford}, that the Dunford integral is given by
$\innprod{\E[X],\chi}=\inthm \zeta(\xt) \dd\mu(\xt)$.
\end{proof}

Thus, similarly to the
Bochner or Pettis moments in \refT{Tmoment},
an injective Dunford  $\ell$-th moment is represented by
the function describing all
mixed $\ell$-th moments of $X(t)$, $t\in\hI^m$.
However, unlike the situation for Bochner and Pettis moments,
for Dunford moments we have,
in general, 
to consider $\zeta$ as a function of $\hI^{\ell m}$ and not just on
$\oi^{\ell m}$, see \refE{Edunford2} below.

\begin{remark}\label{RpiDunford}
For projective Dunford moments $\E[X\ptpl]$, the situation seems more
complicated. 
We have a continuous inclusion $i:\doim\ptpl\subseteq\doim\itpl$,
which induces a continuous linear map between the biduals
$i^{**}:(\doim\ptpl)^{**}\subseteq(\doim\itpl)^{**}$.
Thus,
if a projective Dunford moment $\E[X\ptpl]$ exists, then so does the
injective Dunford moment $\E[X\itpl]$, 
and can by \refT{Tdunford} be
represented by  the function $\zeta$ in \eqref{Dmoment}.
However, for $\ell\ge2$,
we do not know whether the map $i^{**}$ is injective
so that also the projective moment is represented  by $\zeta$. 
\end{remark}

We give two simple examples of Dunford moments, showing some bad
behaviour that may occur.
We take 
$m=1$ and $\ell=1$, \ie, we consider the mean $\E X$ of random variables $X$
in $\doi$.
\begin{example}
  \label{Edunford1}
Let 
$X=2^n\etta_{[2^{-n-1},2^{-n})}$ with probability $2^{-n}$, $n\ge1$.
Then $\E|X(t)|=\E X(t)=\etta_{(0,1/2)}(t)\le1$, 
for every $t\in\oi$,
and thus $\E X$ exists in Dunford
sense by \refT{TD1}\ref{td1d}. However, the function 
$\zeta(t):=\E [X(t)]$ is not
right-continuous at 0, so it does not belong to $\doi$; hence this function
does not represent $\E X$ in the sense of \refT{Tmoment}. In fact, it
follows that $\E X\in\doi^{**}\setminus\doi$. 

Nevertheless, \refT{Tdunford} shows that $\E X$ is represented by $\zeta$,
regarded as a function on $\hI$. It is easily seen that 
$\zeta(\xt):=\E [X(\xt)]=\etta_{(0,1/2)}(\xt)$ for all $\xt\in\hI$, and thus 
the Dunford mean $\E X$ is given by this function $\etta_{(0,1/2)}$ on
$\hI$; this function is bounded and \Bameas{} (as guaranteed by
\refT{Tdunford}), but  it is not continuous, and thus does not correspond to
an element of $\doi$. 

By \refT{TD1}, or \refT{Tmoment}, $\E X$ does not exist in Pettis (or
Bochner) sense.
\end{example}

\begin{example}
  \label{Edunford2}
 Let
$X=2^n\etta_{[1-2^{-n},1-2^{-n-1})}$ with probability $2^{-n}$, $n\ge1$.
Then $\E|X(t)|=\E X(t)=\etta_{[1/2,1)}(t)\le1$
for every $t\in\oi$,
and thus $\E X$ exists in Dunford
sense by \refT{TD1}\ref{td1d}. 
In this case, the function  $\zeta(t):=\E [X(t)]=\etta_{[1/2,1)}(t)$ 
is a function in $\doi$.
Nevertheless, the Dunford moment $\E X\in \doi^{**}$ cannot be identified
with the function $\zeta=\etta_{[1/2,1)}\in\doi$. 

To see this, we consider $\xt\in\hI$, as prescribed by \refT{Tdunford}.
 We have $X(1-)=0$ a.s., and thus \eqref{Dmoment} yields
$\zeta(1-):=\E[X(1-)]=0$. 
Hence, if $t_n\upto1-$, with $t_n\in(1/2,1)$, then
$\zeta(t_n)=1$ does not converge to $\zeta(1-)=0$, and thus $\zeta$ is not
continuous on $\hI$
at $1-$. 

Consequently, we see that also in this example, 
$\E X\in\doi^{**}\setminus\doi$. 
Nevertheless, \refT{Tdunford} shows that $\E X$ is represented by the
function $\zeta(\xt)=\etta_{[1/2,{1-})}(\xt)$ on $\hI$, 
and that \eqref{Dmoment2} holds.

By \refT{TD1}, $\E X$ does not exist in Pettis (or
Bochner) sense.

This example shows that it is necessary to consider the function $\zeta$
given by \eqref{Dmoment} as defined on $\hI^{\ell m}$, and not just on
$\oi^{\ell m}$. In the present example, $\zeta\notin\chI$, but its
restriction to $\oi$ is an element of $\doi$, and thus the restriction of
another function $\zeta'\in\chI$. \refT{Tdunford} shows that the mean $\E X$
is represented by $\zeta$, which is interpreted as an element of
$\chI^{**}\setminus\chI$ by \eqref{Dmoment2}, and not by $\zeta'\in\chI$.
\end{example}

If a Pettis moment exists, then the corresponding Dunford moment exists and
is equal to the Pettis moment. Theorems \ref{Tmoment} and \ref{Tdunford}
then yield two versions of the same representation; 
obviously \eqref{moment} is the restriction of \eqref{Dmoment} to $\oi^{\ell m}$;
we state a simple result showing the consistency of the extensions to $\hilm$.

\begin{theorem}
  \label{TDP0}
  Let $X$ be a \dmeas{} random variable in \doim{} 
and let\/ $\ell\ge1$.
If\/ $X$ has an injective Pettis moment\/ $\E [X\itpl]$,
then the function $\zeta$ in \eqref{Dmoment} is continuous on $\hI^{\ell  m}$,
and this element of $\chIlm=\doix{\ell m}=\doim\itpl$ equals the moment
$\E [X\itpl]$. 
\end{theorem}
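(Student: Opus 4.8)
The plan is to read the identity $\zeta=\E[X\itpl]$ straight off the defining property of the Pettis integral, using that point evaluations are continuous linear functionals. The statement is essentially a consistency check, so no genuine analysis is needed beyond the machinery already in place. As in the proof of \refT{Tdunford}, I would work with the single object $Y:=X\itpl$, which is a $\cD$-measurable random variable in $\doim\itpl=\doix{\ell m}=\chIlm$ and which, under the identification \eqref{tensorf}, is the function $(\xt_1,\dots,\xt_\ell)\mapsto\prodil X(\xt_i)$. By hypothesis $Y$ has a Pettis mean $\Theta:=\E[X\itpl]$. By the very definition of the Pettis integral, $\Theta$ is an element of the space $\chIlm$ itself (not merely of its bidual), characterised by $\innprod{\chi,\Theta}=\E\innprod{\chi,Y}$ for every $\chi\in\chIlm^*$, with $\innprod{\chi,Y}$ integrable for each such $\chi$.

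The key step is to feed point evaluations into this identity. For each $\vt=(\xt_1,\dots,\xt_\ell)\in\hI^{\ell m}$, the evaluation $e_{\vt}\colon g\mapsto g(\vt)$ is a bounded (norm $1$) linear functional on the $C$-space $\chIlm$, and $e_{\vt}(Y)=\prodil X(\xt_i)$ is a genuine random variable, being a finite product of the variables $X(\xt_i)$, each measurable by \refC{CM1}. Since $\Theta\in\chIlm$ we have $e_{\vt}(\Theta)=\Theta(\vt)$, so the defining property of the Pettis mean yields
\begin{equation*}
  \Theta(\vt)=e_{\vt}(\Theta)=\E\bigsqpar{e_{\vt}(Y)}=\E\lrpar{\prodil X(\xt_i)}=\zeta(\vt),
\end{equation*}
for every $\vt\in\hI^{\ell m}$. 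Hence $\zeta=\Theta$ as functions on $\hI^{\ell m}$; in particular $\zeta$ is continuous, being the element $\Theta$ of $\chIlm$, and this element equals the injective Pettis moment $\E[X\itpl]$, which is exactly the assertion. (Existence of the Pettis moment already forces $\sup_{\xt}\E|X(\xt)|^\ell<\infty$ via \refT{TD1}\ref{td1p} together with \refL{Lfatou}, so $\zeta$ is in any case a well-defined bounded function; but the Pettis integrability of $Y$ guarantees directly that $e_{\vt}(Y)$ is integrable, so the display is automatically meaningful.)

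I do not anticipate any real obstacle. The only points deserving a word of care are that point evaluations at the "extra" points $\xt\in\hI^m$ are legitimate continuous functionals on $C(\hI^{\ell m})$ (immediate from compactness of $\hI^{\ell m}$) and that the Pettis integrand $e_{\vt}(Y)$ is measurable, which is precisely \refC{CM1}; both are already available. The argument is thus a one-line evaluation of the Pettis identity at Dirac functionals, and it meshes with \refT{Tdunford}, whose representation of the Dunford moment by $\zeta$ coincides here with the Pettis moment since the latter, when it exists, lies in the space.
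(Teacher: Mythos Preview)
The proposal is correct and takes essentially the same approach as the paper: both evaluate the defining Pettis identity at the point-evaluation functionals on $C(\hI^{\ell m})$ to identify the Pettis moment with $\zeta$, which in particular forces $\zeta$ to be continuous. Your write-up is simply more explicit than the paper's one-line reference to the calculation \eqref{moment00} extended from $\oi^m$ to $\hI^m$.
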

\begin{proof}
The Pettis moment $\E[X\itpl]\in\doix{\ell m}=\chIlm$, and this function on
$\hI^{\ell m}$ equals $\zeta$ in \eqref{Dmoment}
by the calculation \eqref{moment00} extended to
$\xt_1,\dots,\xt_\ell\in\hI^m$, see also \cite[Theorem 7.10]{SJ271}.  
\end{proof}

There exists no general converse to this; even if the function $\zeta$ in
\eqref{Dmoment} is continuous on $\hilm$, the Pettis moment $\E[X\itpl]$
does not have to exist, as shown by the trivial  \refE{EP} below.
However, \refT{TP} shows that the implication holds in some cases.

\begin{example}
  \label{EP}
Take again $\ell=m=1$.
Let $Y=\eta X$, where $X$ is as in \refE{Edunford1} and $\eta=\pm1$, with
$\P(\eta=1)=\P(\eta=-1)=1/2$, with $X$ and $\eta$ independent.
Then $\zeta(\xt):=\E[X(\xt)]=0$ for every $\xt\in\hI$, so $\zeta\in C(\hI)$;
nevertheless $\E Y$ does not exist in Pettis sense by \refT{TD1} (or by the
definition \eqref{Dpettis}, taking $E:=\set{\eta=1}$).
\end{example}

\begin{theorem}\label{TP}
  Let $X$ be a \dmeas{} random variable in \doim{} 
and let $\ell\ge1$.
Suppose further that either
\begin{alphenumerate}
   \item $X(t)\ge0$ \as{} for every $t\in\oi$, or
  \item $\ell$ is even.
\end{alphenumerate}
Then the following are equivalent.
\begin{romenumerate}
\item \label{TPa}
$X$ has an injective Pettis moment $\E [X\itpl]$.
\item \label{TPb}
The function $\zeta$ in \eqref{Dmoment} 
exists everywhere and is continuous on $\hI^{\ell  m}$.
\item \label{TPc}
$\E |X(\xt)|^\ell<\infty$ for every $\xt\in\him$, 
and the function $\xt\mapsto\zeta(\xt,\dots,\xt):=\E[X(\xt)^\ell]$ 
is continuous on $\him$.
 \end{romenumerate} 
\end{theorem}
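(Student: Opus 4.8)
The plan is to prove the cycle \ref{TPa}$\Rightarrow$\ref{TPb}$\Rightarrow$\ref{TPc}$\Rightarrow$\ref{TPa}. The first implication \ref{TPa}$\Rightarrow$\ref{TPb} is nothing but \refT{TDP0}: if the injective Pettis moment exists, then the function $\zeta$ of \eqref{Dmoment} is everywhere defined and continuous on $\hilm$. For \ref{TPb}$\Rightarrow$\ref{TPc} I would restrict $\zeta$ to the diagonal. The map $\xt\mapsto(\xt,\dots,\xt)$ is continuous $\him\to\hilm$, so $\xt\mapsto\zeta(\xt,\dots,\xt)=\E\bigsqpar{X(\xt)^\ell}$ is finite and continuous on $\him$. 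The role of hypotheses (a) and (b) is exactly to replace $X(\xt)^\ell$ by $|X(\xt)|^\ell$: when $\ell$ is even this is automatic, and when $X(t)\ge0$ \as{} for every $t\in\oi^m$ one first notes that, picking $t_n\in\oi^m$ with $t_n\to\xt$ (possible since $\oi^m$ is dense in the first countable space $\him$) and using $X(\xt)=\lim_n X(t_n)$, also $X(\xt)\ge0$ \as{} for every $\xt\in\him$. Thus $\E|X(\xt)|^\ell=\E\bigsqpar{X(\xt)^\ell}$, which gives \ref{TPc}.

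The substance is \ref{TPc}$\Rightarrow$\ref{TPa}. By \refT{TD1}\ref{td1p} it suffices to prove that $\set{|X(t)|^\ell:t\in\oi^m}$ is \ui, and I would in fact prove it for the larger family $Y_\xt:=|X(\xt)|^\ell$, $\xt\in\him$ (which contains the former via $\iota$). Under \ref{TPc} together with (a)/(b) we have $\E Y_\xt=\E\bigsqpar{X(\xt)^\ell}=\zeta(\xt,\dots,\xt)$, a continuous, hence bounded, function on the compact space $\him$; so $\set{Y_\xt}$ is bounded in $L^1$. The extra analytic input is the Vitali--Scheff\'e fact that if $Z_n\ge0$, $Z_n\to Z$ \as{} and $\E Z_n\to\E Z<\infty$, then $\set{Z_n}$ is \ui{} (see \eg{} \cite{Gut}).

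To promote this sequential statement to uniform integrability of the whole (uncountable) family I would argue by contradiction. If $\set{Y_\xt}$ is not \ui, then, being nonnegative and $L^1$-bounded, it has $c:=\lim_{K\to\infty}\sup_{\xt}\E\bigsqpar{Y_\xt\ett{Y_\xt>K}}>0$, so for each $n$ there is $\xt_n\in\him$ with $\E\bigsqpar{Y_{\xt_n}\ett{Y_{\xt_n}>n}}>c/2$. Since $\him$ is compact and first countable, it is sequentially compact, so some subsequence $\xt_{n_k}\to\xt_*\in\him$. For every $\go$ the function $X(\go,\cdot)$ is continuous on $\him$, whence $Y_{\xt_{n_k}}\to Y_{\xt_*}$, while $\E Y_{\xt_{n_k}}\to\E Y_{\xt_*}<\infty$ by the continuity asserted in \ref{TPc}. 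The Vitali--Scheff\'e fact then makes $\set{Y_{\xt_{n_k}}}_k$ \ui, so there is $K$ with $\sup_k\E\bigsqpar{Y_{\xt_{n_k}}\ett{Y_{\xt_{n_k}}>K}}<c/2$; taking $k$ with $n_k>K$ yields $\E\bigsqpar{Y_{\xt_{n_k}}\ett{Y_{\xt_{n_k}}>n_k}}\le\E\bigsqpar{Y_{\xt_{n_k}}\ett{Y_{\xt_{n_k}}>K}}<c/2$, a contradiction. Hence $\set{Y_\xt:\xt\in\him}$, and a fortiori $\set{|X(t)|^\ell:t\in\oi^m}$, is \ui, and \refT{TD1}\ref{td1p} gives \ref{TPa}.

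The hard part is this last promotion from sequential to full uniform integrability; it is handled by the sequential compactness of the non-metrizable but first countable compact space $\him$, and it is precisely here (and in \ref{TPb}$\Rightarrow$\ref{TPc}) that the evenness of $\ell$ or the \as{} nonnegativity of $X$ is used, to turn continuity of the diagonal moment $\E\bigsqpar{X(\xt)^\ell}$ into control of $\E|X(\xt)|^\ell$.
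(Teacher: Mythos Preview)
Your proof is correct and follows essentially the same route as the paper. The paper proves \ref{TPa}$\Rightarrow$\ref{TPb} via \refT{TDP0}, calls \ref{TPb}$\Rightarrow$\ref{TPc} trivial, and for \ref{TPc}$\Rightarrow$\ref{TPa} cites \cite[Theorem 7.19]{SJ271} for the uniform integrability of $\{|X(\xt)|^\ell:\xt\in\him\}$, observing that the metrizability hypothesis there is used only through a property that first countability of $\him$ already supplies; your explicit Vitali--Scheff\'e plus sequential-compactness contradiction argument is precisely the content of that cited proof, spelled out in full.
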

\begin{proof}
  \ref{TPa}$\implies$\ref{TPb}:
By \refT{TDP0}.

  \ref{TPb}$\implies$\ref{TPc}:
Trivial.

  \ref{TPc}$\implies$\ref{TPa}:
In both cases we have $X(\xt)^\ell\ge0$ and thus $|X(\xt)|^\ell=X(\xt)^\ell$ a.s.
The argument in the proof of the similar \cite[Theorem 7.19]{SJ271} shows
that
the family $\set{|X(\xt)|^\ell:\xt\in\him}$ of random variables is
uniformly integrable.
This proof in \cite{SJ271} is stated for $C(K)$ when $K$ is a metrizable
compact, but in the part of the proof used here, metrizability is used only
to show that a sequentially continuous function on $K$ is continuous, and
this holds for every first countable compact $K$
\cite[Theorem 1.6.14 and Proposition 1.6.15]{Engelking}, and thus for
$\hI^m$.

The result \ref{TPa} now follows from \refT{TD1}.
\end{proof}

\section{An application to Zolotarev distances}

\subsection{Equal moments}
As a corollary of the results on moment above, we obtain the following
results on equality of moments of two different \doim-valued \rv{s}.

\begin{theorem}  \label{TXY1}
Let $X$ and $Y$ be \dmeas{} \doim-valued \rv{s}, and let $\ell\ge1$.
Suppose that 
$\E \norm{X}^\ell, \E \norm{Y}^\ell<\infty$. 
Then  the moments in \ref{TXY1ptp} and \ref{TXY1itp} below exist in 
Pettis sense, and
the following are equivalent.
\begin{romenumerate}

\item  \label{TXY1oi} 
For every $t_1,\dots,t_\ell\in\oi^m$,
\begin{equation}\label{txy1oi}
  \E\lrpar{\prodil X(t_i)}
=\E\lrpar{\prodil Y(t_i)}.
\end{equation}  

\item   \label{TXY1ga}
For every continuous $\ell$-linear form $\ga$ on $\doim$,
  \begin{equation}\label{txy1ga}
\E \ga(X,\dots,X)=\E \ga(Y,\dots,Y) .   
  \end{equation}

\item   \label{TXY1ptp}
$\E [X\ptpl]=\E [Y\ptpl]$. 

\item   \label{TXY1itp}
$\E [X\itpl]=\E [Y\itpl]$. 
\end{romenumerate}
\end{theorem}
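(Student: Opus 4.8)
The plan is to dispose of the existence claim first and then to prove the four conditions equivalent by establishing the chain $\ref{TXY1oi}\iff\ref{TXY1itp}$, $\ref{TXY1ga}\iff\ref{TXY1ptp}$ and $\ref{TXY1itp}\iff\ref{TXY1ptp}$. Existence is immediate from the hypotheses: since $\E\norm X^\ell<\infty$ and $\E\norm Y^\ell<\infty$, \refT{TD1kp} shows that the projective and injective moments of both $X$ and $Y$ exist in Pettis sense, so all expressions in \ref{TXY1ptp} and \ref{TXY1itp} are well defined.

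For $\ref{TXY1oi}\iff\ref{TXY1itp}$ I would invoke \refT{Tmoment}: the injective Pettis moment $\E[X\itpl]$ is precisely the function in $\doix{\ell m}$ whose value at a point $(t_1,\dots,t_\ell)$ of $\oi^{\ell m}=(\oi^m)^\ell$ equals $\E\bigpar{\prodil X(t_i)}$, and similarly for $Y$. Two functions in $\doix{\ell m}$ coincide exactly when they agree at every point of $\oi^{\ell m}$, and each such point is a tuple $(t_1,\dots,t_\ell)$ with $t_i\in\oi^m$; hence $\E[X\itpl]=\E[Y\itpl]$ is nothing but the pointwise identity \eqref{txy1oi}.

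For $\ref{TXY1ga}\iff\ref{TXY1ptp}$ I would use that a continuous $\ell$-linear form $\ga$ on $\doim$ is the same object as a continuous linear functional on $\doim\ptpl$. The integrand $\innprod{\ga,X\ptpl}=\ga(X,\dots,X)$ is measurable by \refT{TM} and dominated by $\norm\ga\,\norm X^\ell$, so the defining property of the Pettis integral gives $\innprod{\ga,\E[X\ptpl]}=\E\ga(X,\dots,X)$, and likewise for $Y$. Since two elements of the Banach space $\doim\ptpl$ agree iff every continuous linear functional takes equal values on them (Hahn--Banach), $\E[X\ptpl]=\E[Y\ptpl]$ is equivalent to \eqref{txy1ga} holding for all $\ga$.

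The step that carries the real content is $\ref{TXY1itp}\iff\ref{TXY1ptp}$, linking the injective and projective sides. Let $i:\doim\ptpl\to\doim\itpl=\doix{\ell m}$ be the natural inclusion. Being bounded and linear, $i$ commutes with the Pettis integral: for $\psi\in(\doim\itpl)^*$ one has $\innprod{\psi,i(\E[X\ptpl])}=\innprod{i^*\psi,\E[X\ptpl]}=\E\innprod{i^*\psi,X\ptpl}=\E\innprod{\psi,X\itpl}$, so $i(\E[X\ptpl])=\E[X\itpl]$ and similarly for $Y$. This already gives $\ref{TXY1ptp}\implies\ref{TXY1itp}$. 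The converse is where the structure of the space enters: since $\doim$ has the \ap, the map $i$ is \emph{injective} (see \refSS{SStensor}), so from $\E[X\itpl]=\E[Y\itpl]$, i.e.\ $i(\E[X\ptpl])=i(\E[Y\ptpl])$, injectivity of $i$ yields $\E[X\ptpl]=\E[Y\ptpl]$. I expect this injectivity to be the one genuinely non-formal input; the remaining implications are either definitional or follow from \refT{Tmoment} and \refT{TM}. Chaining the three equivalences then yields the theorem.
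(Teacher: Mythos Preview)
Your proof is correct and follows essentially the same route as the paper: existence via \refT{TD1kp}, the equivalences \ref{TXY1oi}$\iff$\ref{TXY1itp} via \refT{Tmoment} and \ref{TXY1ga}$\iff$\ref{TXY1ptp} via the identification of $\ell$-linear forms with $(\doim\ptpl)^*$, and the link \ref{TXY1ptp}$\iff$\ref{TXY1itp} through injectivity of $\doim\ptpl\hookrightarrow\doim\itpl$ (approximation property). The paper packages the last step slightly differently, absorbing it into the appeal to \refT{Tmoment} (whose preamble already identifies the projective Pettis moment with a function in $\doix{\ell m}$ via that same injectivity), but the content is identical.
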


Note that \ref{TXY1oi} is a special case of \ref{TXY1ga}; the converse
implication \ref{TXY1oi}$\implies$\ref{TXY1ga} is far from trivial and is
the main content of this theorem.

\begin{proof}
First, the Pettis moments in \ref{TXY1ptp} and \ref{TXY1itp} exist
by \refT{TD1kp}. The assumptions imply also that the expectations in
\eqref{txy1oi} and \eqref{txy1ga} are finite, see \eqref{gal}.

\ref{TXY1oi}$\iff$\ref{TXY1ptp}$\iff$\ref{TXY1itp}:
By \refT{Tmoment}.

\ref{TXY1ga}$\iff$\ref{TXY1ptp}:
The equality $\E [X\ptpl]=\E [Y\ptpl]$ holds if and only if
we have
$\innprod{\ga,\E [X\ptpl]}=\innprod{\ga,\E [Y\ptpl]}$
for every continuous linear functional $\ga$ on $\doim\ptpl$; 
these $\ga$ can be identified with the continuous $\ell$-linear forms on
$\doim$,  
and the result follows since
\begin{equation}
  \innprod{\ga,\E X\ptpl}
=\E\innprod{\ga, X\ptpl}
=\E\innprod{\ga(X,\dots,X)}
\end{equation}
and similarly for $Y$.
\end{proof}

 \subsection{Zolotarev distance} 
The \emph{Zolotarev distance} $\zeta_s(X,Y)$ between two 
random variables
$X$ and $Y$ with values in a Banach space, 
or more precisely between their distributions $\cL(X)$ and $\cL(Y)$,
with $s>0$ a real parameter,
was defined by \citet{Zolotarev76};
we refer to \cite{Zolotarev76} 
or to \cite[Appendix B]{SJ271} and the further
references there for the definition and basic properties.

The Zolotarev distance is a useful tool to show approximation and
convergence of distributions.
In order to apply the Zolotarev distance to a problem, the first step is to
show that the distance $\zeta_s(X,Y)$ between two given
random variables is finite. 
It was shown in \cite[Lemma B.2]{SJ271} that,
assuming that $X$ and $Y$ are weakly measurable and that 
$\E\norm{X}^s,\E\norm{Y}^s<\infty$,
this holds 
if and only if the projective (Dunford) moments
$\E X\ptpl$ and $\E Y\ptpl$ are equal for every positive integer $\ell<s$.
(This condition is vacuous if $0<s\le1$.)

For the case of random variables in $\doim$, this and the results above
yield the following simple criterion, which extends results for the case $m=1$ 
in \cite{SJ271}.

\begin{theorem}  \label{TZ}
Let $X$ and $Y$ be \dmeas{} \doim-valued \rv{s}, and let $s>0$.
Suppose that 
$\E \norm{X}^s, \E \norm{Y}^s<\infty$. 
Then the following are equivalent.
\begin{romenumerate}

\item \label{TZZ}
The Zolotarev distance $\zeta_s(X,Y)<\infty$.

\item  \label{TZoi} 
For every positive integer $\ell<s$ and every
$t_1,\dots,t_\ell\in\oi^m$,
\begin{equation}\label{tz1oi}
  \E\lrpar{\prodil X(t_i)}
=\E\lrpar{\prodil Y(t_i)}.
\end{equation}  


%
\end{romenumerate}
\end{theorem}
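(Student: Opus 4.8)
The plan is to deduce the theorem by combining the general finiteness criterion for the Zolotarev distance with the moment results already established, so that the proof becomes essentially an assembly of earlier statements. First I would verify the two standing hypotheses of \cite[Lemma B.2]{SJ271}: since $X$ and $Y$ are $\cD$-measurable, \refC{CM3} shows they are weakly measurable, and $\E\norm{X}^s,\E\norm{Y}^s<\infty$ by assumption. That lemma then gives that $\zeta_s(X,Y)<\infty$ if and only if the projective (Dunford) moments satisfy $\E X\ptpl=\E Y\ptpl$ for every positive integer $\ell<s$. (When $0<s\le1$ there are no such $\ell$, so both sides of the asserted equivalence hold trivially.) Thus the whole theorem reduces to showing, for each fixed positive integer $\ell<s$, that the equality of these projective moments is equivalent to the equality of all mixed $\ell$-th moments at points of $\oi^m$, which is precisely \eqref{tz1oi}.

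The one point that needs care is that \cite[Lemma B.2]{SJ271} is phrased in terms of \emph{Dunford} moments, whereas \refT{TXY1} is phrased in terms of \emph{Pettis} moments. To bridge this I would observe that for each integer $\ell$ with $1\le\ell<s$ the hypothesis $\E\norm{X}^s<\infty$ forces $\E\norm{X}^\ell\le(\E\norm{X}^s)^{\ell/s}<\infty$ by monotonicity of $L^p$-norms on a probability space (and similarly for $Y$). Hence, by \refT{TD1kp}, the projective moment $\E X\ptpl$ exists already in Pettis sense; since a Pettis moment, when it exists, coincides with the corresponding Dunford moment and lies in $\doim\ptpl$ itself rather than merely in its bidual, the projective Dunford moment appearing in Lemma B.2 is in fact this Pettis moment. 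Consequently the equality $\E X\ptpl=\E Y\ptpl$ of Dunford moments is the same as equality of the Pettis moments.

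Having identified the moments as Pettis moments, I would then invoke \refT{TXY1} for this value of $\ell$ (its integrability hypotheses hold by the same bound): it gives the equivalence of $\E X\ptpl=\E Y\ptpl$ with \eqref{txy1oi}, that is, with the equality of $\E\bigpar{\prodil X(t_i)}$ and $\E\bigpar{\prodil Y(t_i)}$ for all $t_1,\dots,t_\ell\in\oi^m$. Intersecting this equivalence over all positive integers $\ell<s$ and feeding the result back through Lemma B.2 yields exactly the asserted equivalence of \ref{TZZ} and \ref{TZoi}. I do not expect a serious obstacle: the only genuinely delicate step is the Dunford--Pettis reconciliation above, which rests entirely on the finiteness of $\E\norm{X}^\ell$ guaranteed by $\ell<s$, and everything else is bookkeeping across the preceding theorems.
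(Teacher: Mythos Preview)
Your proof is correct and follows essentially the same route as the paper: invoke \refC{CM3} for weak measurability, apply \cite[Lemma B.2]{SJ271} to reduce \ref{TZZ} to equality of projective moments $\E X\ptpl=\E Y\ptpl$ for each integer $\ell<s$, and then use \refT{TXY1} to identify this with \ref{TZoi}. Your explicit Dunford--Pettis reconciliation via \refT{TD1kp} is a welcome clarification of a step the paper leaves implicit.
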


\begin{proof}
  By \refC{CM3}, $X$ and $Y$ are \wmeas, and thus \cite[Lemma B.2]{SJ271}
  applies and shows, as said above, that 
\ref{TZZ}$\iff \E [X\ptpl]=\E [Y\ptpl]$,
which is equivalent to \ref{TZoi} by \refT{TXY1}.
\end{proof}

\subsection{Further results and comments}

For injective moments,
we can weaken the moment assumption in
\refT{TXY1}.
\begin{theorem}  \label{TXY2}
Let $X$ and $Y$ be \dmeas{} \doim-valued \rv{s},
and let $\ell\ge1$.
Suppose further that 
$\sup_{t\in\oi^m}\E |X(t)|^\ell<\infty$ and $
\sup_{t\in\oi^m}\E |Y(t)|^\ell<\infty$.
Then  the injective moments in \ref{TXY2itp} below exist in Dunford
sense, and
the following are equivalent.
\begin{romenumerate}

\item  \label{TXY2hI} 
For every $\xt_1,\dots,\xt_\ell\in\hI^m$,
\begin{equation}\label{txy2hI}
  \E\lrpar{\prodil X(\xt_i)}
=\E\lrpar{\prodil Y(\xt_i)}.
\end{equation}

\item   \label{TXY2itp}
$\E [X\itpl]=\E [Y\itpl]$. 
\end{romenumerate}
\end{theorem}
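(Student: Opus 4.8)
The plan is to reduce the whole statement to the representation theorem for injective Dunford moments, \refT{Tdunford}, after which both implications become almost immediate.

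First I would settle existence. The hypotheses $\sup_{t\in\oi^m}\E|X(t)|^\ell<\infty$ and $\sup_{t\in\oi^m}\E|Y(t)|^\ell<\infty$ are exactly the conditions appearing in \refT{TD1}\ref{td1d}, so the injective Dunford moments $\E[X\itpl]$ and $\E[Y\itpl]$ both exist. By \refT{Tdunford}, each is represented by a bounded Baire measurable function on $\hilm$, namely the mixed-moment functions $\zeta_X$ and $\zeta_Y$ defined by \eqref{Dmoment}, in the sense of \eqref{Dmoment2}: for every continuous linear functional $\chi$ on $\doix{\ell m}=C(\hilm)$, represented by a signed Baire measure $\mu$, one has $\innprod{\E[X\itpl],\chi}=\int_{\hilm}\zeta_X\dd\mu$, and likewise for $Y$.

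Next I would rephrase the two conditions. Condition \ref{TXY2hI} is precisely the pointwise identity $\zeta_X=\zeta_Y$ on $\hilm$, whereas \ref{TXY2itp} asserts that $\E[X\itpl]$ and $\E[Y\itpl]$ agree as elements of the bidual $C(\hilm)^{**}$. Two elements of the bidual coincide if and only if they agree against every $\chi\in C(\hilm)^*$, so by the representation above, \ref{TXY2itp} is equivalent to $\int_{\hilm}\zeta_X\dd\mu=\int_{\hilm}\zeta_Y\dd\mu$ for every signed Baire measure $\mu$ on $\hilm$.

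With this reformulation the equivalence is transparent. The implication \ref{TXY2hI}$\implies$\ref{TXY2itp} is trivial, since equal functions have equal integrals against every $\mu$. For the converse, the one small point to notice is that pointwise values of $\zeta$ can be tested by duality: for each $\xt\in\hilm$ the point evaluation $f\mapsto f(\xt)$ is a continuous linear functional on $C(\hilm)$, hence by \refP{Priesz} is represented by a signed Baire measure, which by uniqueness is the Dirac measure $\delta_{\xt}$; and $\int_{\hilm}\zeta\dd\delta_{\xt}=\zeta(\xt)$ for any bounded Baire measurable $\zeta$. Testing \ref{TXY2itp} against these point masses therefore yields $\zeta_X(\xt)=\zeta_Y(\xt)$ for every $\xt\in\hilm$, which is \ref{TXY2hI}. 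The main (indeed essentially the only) conceptual step is recognizing that on the non-metrizable compact space $\hilm$ the point evaluations are still legitimate continuous functionals represented by Baire point masses, so that the duality in \refT{Tdunford} recovers $\zeta$ pointwise rather than merely up to a null set.
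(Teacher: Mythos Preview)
Your proposal is correct and follows exactly the route the paper takes: existence of the injective Dunford moments from \refT{TD1}\ref{td1d}, then the equivalence \ref{TXY2hI}$\iff$\ref{TXY2itp} from the representation in \refT{Tdunford}. The paper's proof is a two-line citation of these results; your version simply unpacks how \refT{Tdunford} yields the equivalence, including the observation that point evaluations on $C(\hilm)$ are represented by Dirac Baire measures (singletons in $\hI$ are Baire by the argument in \refApp{Abaire}, hence in $\hilm$ by \refL{Lbaire}\ref{baire-times}), so that \eqref{Dmoment2} recovers $\zeta$ pointwise.
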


\begin{proof}
The injective Dunford moments in  \ref{TXY2itp} exist
by \refT{TD1}.

The equivalence
\ref{TXY2hI} $\iff$\ref{TXY2itp}
follows by \refT{Tdunford}.
\end{proof}

Note that we consider arbitrary $\xt_i\in\hI$ in \eqref{txy2hI}, unlike in
\eqref{txy1oi}; 
this is necessary as is shown by the following example.

\begin{example}
  Take $\ell=m=1$.
Let $X$ be as in \refE{Edunford2}, and let $Y$ be the deterministic function
$\etta_{[1/2,1)}\in\doi$.
Then,
as shown in \refE{Edunford2}, 
$\E[X(t)]=\E[Y(t)]$ for every $t\in\oi$, so \eqref{txy1oi} holds,
but $\E[X(1-)]=0\neq 1=\E[Y(1-)]$, and \eqref{txy2hI} fails;
consequently,  neither \ref{TXY2hI} nor \ref{TXY2itp} in \refT{TXY2} holds.
\end{example}

For $\ell=1$, there is no difference between injective and projective
moments, and thus \refT{TXY2} applies to projective moments as well.

For $\ell=2$, \refT{TDG} shows that the assumptions of \refT{TXY2} imply
also existence of the projective Dunford moments 
$\E X\ptpx2$ and $\E Y\ptpx2$.
However, we do not know whether they always are equal
when the injective moments are,
see also \refR{RpiDunford}.
\begin{problem}
Assume that the assumptions of \refT{TXY2} hold with $\ell=2$.
Are \ref{TXY2hI} and \ref{TXY2itp} equivalent also to
$\E [X\ptpl]=\E [Y\ptpl]$?
\end{problem}

\appendix
\section{Baire and Borel sets in $\hI^m$}\label{Abaire}

We show here the claims in \refE{Ebaire}, and give some further results.
The results are presumably known, but we have not found a reference and give
proofs for completeness.

\subsection{The case $m=1$}

Recall from \refL{Lbaire}\ref{baire=} that 
the Baire and Borel \gsf{s} coincide
for every
metrizable compact space; in particular $\Ba(I)=\cB(I)$.
The space $\hI$ is compact but not metrizable; nevertheless, as shown below,
the Baire and Borel \gsf{} coincide there too.

Recall also that $\rho:\hI\to I$ is the natural projection.

\begin{proposition}\label{P1}
  \begin{equation}
  \Ba(\hI)
=
\cB(\hI)
=
\bigset{\rho\qw(A)\diff N: A\in\cB(I),\;
N\subset\hI \text{ with }|N|\le\aleph_0 }.
  \end{equation}
\end{proposition}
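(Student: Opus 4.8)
The plan is to establish the chain of inclusions $\Ba(\hI)\subseteq\cB(\hI)\subseteq\cC\subseteq\Ba(\hI)$, where $\cC$ denotes the family on the right-hand side of the displayed equation; the first inclusion is \refL{Lbaire}\ref{baire-borel}, so both asserted equalities follow once the other two inclusions are in hand. A preliminary step is to check that $\cC$ really is a $\sigma$-field. Since $\rho$ is continuous, $\rho\qw(\cB(I))$ is a sub-$\sigma$-field of $\cB(\hI)$, and the countable subsets of $\hI$ form a $\sigma$-ideal. One then applies the standard fact that $\set{G\diff N:G\in\mathcal G,\ N\text{ countable}}$ is a $\sigma$-field for any $\sigma$-field $\mathcal G$; the only nontrivial point, closure under countable unions, follows from the identity $\bigcup_n(G_n\diff N_n)=\bigpar{\bigcup_n G_n}\diff N$ with $N\subseteq\bigcup_n N_n$ countable.

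For $\cC\subseteq\Ba(\hI)$ I would argue as follows. Because $\rho:\hI\to I$ is continuous it is Baire measurable (\refL{Lbaire}\ref{baire-cont}), and since $I$ is metrizable $\Ba(I)=\cB(I)$ (\refL{Lbaire}\ref{baire=}); hence $\rho\qw(A)\in\Ba(\hI)$ for every $A\in\cB(I)$. It remains to see that every countable $N$ is a Baire set, for which it suffices that each singleton is. Here I would use that $\hI$ is compact Hausdorff and first countable: first countability together with the Hausdorff property makes every point a $G_\delta$, and in the (normal) space $\hI$ a closed $G_\delta$ set is a zero set of a continuous function, hence a Baire set. Therefore $\rho\qw(A)\diff N\in\Ba(\hI)$, as required.

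The substantial inclusion is $\cB(\hI)\subseteq\cC$, and since $\cC$ is a $\sigma$-field it is enough to show that every open $U\subseteq\hI$ lies in $\cC$. The natural candidate is $A:=\set{t\in I:\rho\qw(t)\subseteq U}$, the set of values whose entire fibre lies in $U$. One checks immediately that $\rho\qw(A)\subseteq U$, so $U=\rho\qw(A)\cup S$, where $S:=U\setminus\rho\qw(A)$ consists exactly of those points of $U$ whose partner (the other copy over the same value of $\rho$) is \emph{not} in $U$; moreover $S$ is disjoint from $\rho\qw(A)$, so in fact $U=\rho\qw(A)\diff S$. Two things then remain: that $A$ is Borel (indeed open) in $I$, and that $S$ is countable. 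Openness of $A$ comes from gluing, at a value $t$ with both copies in $U$, a left neighbourhood of $t-$ and a right neighbourhood of $t$ into a single order interval $(r,s)\subseteq U$, whose $\rho$-image $(\rho(r),\rho(s))$ is then an open real interval contained in $A$ (the endpoints $0,1$ costing at most finitely many points).

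The main obstacle — and the place where the peculiar structure of the split interval really enters — is the countability of $S$, which cannot be obtained by naively decomposing $U$ into basic intervals, since $\hI$ is not second countable. I would split $S$ into $S_1$ (right copies $t\in U$ with $t-\notin U$) and $S_2$ (left copies $t-\in U$ with $t\notin U$) and treat each by a disjointness argument. For $S_1$, openness of $U$ gives a basic neighbourhood $(t-,s_t)\subseteq U$ with $s_t>t$; if $t<t'$ were both in $S_1$ with $s_t>t'$, then $t'-\in(t-,s_t)\subseteq U$, contradicting $t'\in S_1$, so $s_t\le t'$ and the real intervals $(t,s_t)$ are nonempty and pairwise disjoint, whence $S_1$ is countable. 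An entirely symmetric argument, using left neighbourhoods $(r_t,t)$ and taking the right copy $t$ as the obstruction, shows $S_2$ is countable. Assembling these gives $U=\rho\qw(A)\diff S\in\cC$, which completes the chain $\Ba(\hI)\subseteq\cB(\hI)\subseteq\cC\subseteq\Ba(\hI)$ and hence proves the proposition.
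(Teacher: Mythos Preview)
Your proof is correct and follows the same three-inclusion scaffolding as the paper: $\Ba(\hI)\subseteq\cB(\hI)\subseteq\cC\subseteq\Ba(\hI)$, with the first inclusion trivial and the third handled identically (continuity of $\rho$ plus singletons being closed $G_\delta$'s in a first-countable compact Hausdorff space).

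The genuine difference lies in the step $\cB(\hI)\subseteq\cC$. The paper decomposes an open $\hU$ as a union of open intervals $\hU_\ga$ in $\hI$, forms the open set $V=\bigcup_\ga V_\ga\subseteq(0,1)$ of corresponding real intervals, and then invokes the structure theorem for open subsets of $\bbR$ (countable disjoint union of intervals $W_j$) to pin the exceptional set inside the countable collection of endpoints $\{c_j,d_j-,1\}$. You instead define $A$ canonically as the set of $t$ whose entire fibre lies in $U$, prove $A$ open directly by gluing one-sided neighbourhoods, and establish countability of the residual set $S$ via an ad hoc disjointness argument (assigning to each exceptional point a nonempty real interval, and showing these are pairwise disjoint). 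Your route is arguably more self-contained --- it does not appeal to the structure theorem, and your set $A$ is intrinsic to $U$ rather than depending on a chosen interval cover --- though the underlying combinatorics (disjoint intervals in $\bbR$ are countable) is of course the same. Two minor points: your phrase ``the endpoints $0,1$ costing at most finitely many points'' is a bit loose (you actually want $A$ open, and the endpoint cases are handled by one-sided neighbourhoods, not by discarding points); and in the $S_1$ argument your ``real intervals $(t,s_t)$'' should strictly be $(t,\rho(s_t))$, with the edge case $1\in S_1$ (where no $s_t>1$ exists) contributing at most one extra point.
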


In other words, the Borel (or Baire) sets in $\hI$ are obtained from the
Borel sets in $I$ in the natural way (by identifying $t$ and $t-$), except
that there may be a countable number of $t$ such that the set contains only
one of $t$ and $t-$. 

\begin{proof}
Let $\cG:=\bigset{\rho\qw(A)\diff N: A\in\cB(I),\;
N\subset\hI \text{ with }|N|\le\aleph_0 }$;
$\cG$ is easily seen to be a \gsf.
We prove three inclusions separately.

(i) $\Ba(\hI)\subseteq\cB(\hI)$. Trivial.

(ii) $\cB(\hI)\subseteq\cG$.
Note first that an open interval in $\hI$ always is either of the form
$\rho\qw\bigpar{(a,b)}$ for some open interval $(a,b)\subset(0,1)$, or of
this form with one or two of the endpoints $a$ and $b-$ added; if $b=1$ we
may also add 1.
Let $\hU\subseteq\hI$ be open; then $\hU$ is a union of 
a (possibly uncountable) set of 
open intervals $
\hU_\ga\subseteq \hI$. For each $\hU_\ga$, let $V_\ga=(a_\ga,b_\ga)$ be the
corresponding open interval in $I$; 
thus $\hU_\ga\supseteq\rho\qw(V_\ga)$ and $\hU_\ga\setminus\rho\qw(V_\ga)$
consists of at most the two endpoints and 1.
Let $V:=\bigcup_\ga V_\ga$; this is an open subset of $(0,1)$.
Consequently, $V=\bigcup_j W_j$ for some countable collection of open
disjoint intervals $W_j=(c_j,d_j)\subset(0,1)$. 

Consider one of the intervals $V_\ga=(a_\ga,b_\ga)$. If $a_\ga\in V$, then
$a_\ga\in\rho\qw(V)$, and if $a_\ga\notin V$, then $a_\ga$ equals one of the
endpoints $c_j$. Similarly, either $b_\ga-\in\rho\qw(V)$ or $b_\ga$ equals
some endpoint $d_j$.
Consequently, $\hU=\bigcup_\ga\hU_\ga\supseteq \rho\qw(V)$, and 
$\hU\setminus\rho\qw(V)$ is a subset of the countable set
$\set{c_j,d_j-,1}$.
Consequently, $\hU\in\cG$.

This shows that $\cG$ contains every open subset of $\hI$, and thus the
Borel \gsf{} $\cB(\hI)$.

(iii) $\cG\subseteq\Ba(\hI)$:
By \refL{L1}\ref{L1rho}, the mapping $\rho$ is continuous and thus Baire
measurable; hence $\rho\qw(A)\in\Ba(\hI)$ for every $A\in\cB(I)=\Ba(I)$.

  If $\xt\in\hI$, then the singleton \set{\xt} 
is closed, and a $G_\gd$ set; hence \set{\xt} is a Baire set.
Consequently, every countable subset $N$ of $\hI$ is a Baire set.

It follows that $\cG\subseteq\Ba(\hI)$.
\end{proof}

\begin{corollary}\label{CP1}
  Every closed subset of $\hI$ is a $G_\gd$ and every open subset of $\hI$
  is an $F_\gs$.
\end{corollary}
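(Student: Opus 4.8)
The plan is to prove only the second assertion — that every open set $\hU\subseteq\hI$ is an $F_\gs$ — since the first then follows immediately by taking complements: the complement of a closed set is open, and complementation interchanges the classes $F_\gs$ and $G_\gd$.

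First I would reuse the structural decomposition already established in the proof of \refP{P1}(ii), rather than its $\sigma$-field conclusion. That argument shows that any open $\hU\subseteq\hI$ satisfies $\hU\supseteq\rho\qw(V)$ for a suitable open set $V\subseteq I$, and moreover that $\hU\setminus\rho\qw(V)$ is contained in a countable set; hence one may write $\hU=\rho\qw(V)\cup N$ with $N:=\hU\setminus\rho\qw(V)$ countable.

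It then suffices to show that each of the two pieces is $F_\gs$. For $\rho\qw(V)$: since $V$ is open in the metric space $I$, it is an $F_\gs$ there, say $V=\bigcup_n C_n$ with each $C_n$ closed in $I$; as $\rho:\hI\to I$ is continuous by \refL{L1}\ref{L1rho}, each preimage $\rho\qw(C_n)$ is closed in $\hI$, so $\rho\qw(V)=\bigcup_n\rho\qw(C_n)$ is $F_\gs$. For $N$: the space $\hI$ is (compact) Hausdorff, so every singleton is closed, and hence the countable set $N=\bigcup_{x\in N}\set{x}$ is a countable union of closed sets, i.e.\ $F_\gs$. A twofold union of $F_\gs$ sets is $F_\gs$, so $\hU$ is $F_\gs$, which completes the proof.

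There is essentially no deep obstacle here, since the real content is already packaged into \refP{P1}. The only point that requires a little care is conceptual rather than computational: one must invoke the concrete decomposition $\hU=\rho\qw(V)\cup N$ from the proof of \refP{P1} (together with the elementary fact that open subsets of a metric space are $F_\gs$), because the bare $\sigma$-field identity of \refP{P1} merely classifies the Borel sets and does not by itself deliver the $F_\gs$/$G_\gd$ refinement.
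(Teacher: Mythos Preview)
Your proof is correct, but it takes a different route from the paper's. The paper argues on the closed side: a closed subset $F\subseteq\hI$ is compact, hence Borel, hence (by the full $\sigma$-field identity $\Ba(\hI)=\cB(\hI)$ of \refP{P1}) a Baire set; it then invokes the classical fact \cite[Theorem 51D]{Halmos} that every compact Baire set is a $G_\gd$. You instead work on the open side and unpack the structural decomposition $\hU=\rho\qw(V)\cup N$ from the \emph{proof} of \refP{P1}(ii), then build the $F_\gs$ representation explicitly using that open sets in the metric space $I$ are $F_\gs$ and that $\rho$ is continuous. Your approach is more elementary and self-contained, avoiding the external reference to Halmos; the paper's is shorter but leans on general Baire-set theory. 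Incidentally, your closing remark that the bare $\sigma$-field identity cannot yield the $G_\gd$ refinement is not quite right: the paper does extract the corollary directly from $\Ba(\hI)=\cB(\hI)$, but only by feeding it into the general theorem on compact Baire sets.
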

\begin{proof}
 The two parts are obviously equivalent. If $F$ is a closed, and thus
 compact, subset, then $F$ is a Borel set and thus by the proposition a
 Baire set. By \cite[Theorem 51D]{Halmos}, every compact Baire set is a
 $G_\gd$. 
\end{proof}

Since the Baire \gsf{} is generated by the  compact $G_\gd$ sets, the
corollary is equivalent to the proposition.

\begin{corollary}\label{CP1b}
Every finite Borel measure on $\hI$ is regular.
\end{corollary}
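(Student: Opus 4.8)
The plan is to deduce regularity from the coincidence $\Ba(\hI)=\cB(\hI)$ established in \refP{P1}, combined with the standard extension theorem for Baire measures quoted in \refR{Rriesz}. The whole point is that on $\hI$ there is no difference between Baire and Borel sets, so every finite Borel measure is automatically a Baire measure, and the regular Borel measure that the extension theorem attaches to it can only be the measure we started with.

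Concretely, let $\mu$ be a finite (positive) Borel measure on $\hI$. First I would observe that, by \refP{P1}, $\cB(\hI)=\Ba(\hI)$, so $\mu$ is in particular a Baire measure, \ie, a measure on $\Ba(\hI)$. The extension theorem (see \refR{Rriesz} and \cite[Theorem 54.D]{Halmos}) then provides a \emph{unique} regular Borel measure $\tmu$ on $\cB(\hI)$ whose restriction to $\Ba(\hI)$ equals $\mu$. Since $\Ba(\hI)=\cB(\hI)$, passing to the restriction on $\Ba(\hI)$ loses no information: $\tmu$ and $\mu$ are then two measures on the same $\gsf$ that agree on it, whence $\tmu=\mu$. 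As $\tmu$ is regular, so is $\mu$.

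I expect the only genuine subtlety to be bookkeeping about which notion of regularity the cited extension theorem delivers, and checking that it is the one claimed here (inner regularity by compact sets, outer regularity by open sets). On a compact Hausdorff space these are the usual requirements, and \refC{CP1}, which guarantees that every closed subset of $\hI$ is a $G_\gd$ and every open subset an $F_\gs$, lets one pass between the compact-$G_\gd$ regularity supplied by the Baire-measure machinery and the closed/open formulation without extra work. If the signed case were wanted it would follow by applying the positive case to the variation $|\mu|$, but the statement as given concerns finite (positive) measures, so no Jordan decomposition is needed.
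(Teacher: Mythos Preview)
Your argument is correct, but it takes a more roundabout path than the paper. The paper's proof is a single sentence: since $\Ba(\hI)=\cB(\hI)$ by \refP{P1}, every finite Borel measure on $\hI$ \emph{is} a finite Baire measure, and then one simply cites the fact that every finite Baire measure on a compact Hausdorff space is regular (\cite[Theorem 52G]{Halmos}). You instead invoke the extension theorem \cite[Theorem 54.D]{Halmos} and argue that the unique regular Borel extension of $\mu$ must coincide with $\mu$ because the two $\gsf$s are equal. Both routes work and both rest on the same underlying identification $\Ba(\hI)=\cB(\hI)$; the paper's is shorter because it appeals to a result that already delivers regularity directly, whereas yours passes through an extension-and-uniqueness step that is unnecessary once Baire and Borel coincide. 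Your closing remarks about \refC{CP1} reconciling the compact-$G_\gd$ and closed/open formulations of regularity are well observed and make explicit a point the paper leaves tacit.
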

\begin{proof}
  Every finite Baire measure is regular
\cite[Theorem 52G]{Halmos}.
\end{proof}

\subsection{The case $m\ge2$}

The equality of the Baire and Borel \gsf{s} in \refP{P1} does not extend to
$\hI^m$ for $m>1$. We begin with the case $m=2$.

\begin{proposition}\label{P2}
  $\cB(\hI)\times\cB(\hI)
=\Ba(\hI)\times\Ba(\hI)
=\Ba(\hI^2)
\subsetneq\cB(\hI^2)$.
\end{proposition}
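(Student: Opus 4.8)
The plan is to dispose of the two equalities at once and then concentrate all the work on the strict inclusion. The first equality $\cB(\hI)\times\cB(\hI)=\Ba(\hI)\times\Ba(\hI)$ is immediate from \refP{P1}, which gives $\cB(\hI)=\Ba(\hI)$, so the two product $\sigma$-fields coincide factor by factor. The second equality $\Ba(\hI)\times\Ba(\hI)=\Ba(\hI^2)$ is exactly \refL{Lbaire}\ref{baire-times} with $K_1=K_2=\hI$, and the inclusion $\Ba(\hI^2)\subseteq\cB(\hI^2)$ is \refL{Lbaire}\ref{baire-borel}. Thus everything reduces to showing that this last inclusion is proper.

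For the strict inclusion I would exhibit the diagonal $\Delta:=\set{(\xt,\xt):\xt\in\hI}\subseteq\hI^2$. Since $\hI$ is Hausdorff, $\Delta$ is closed, hence $\Delta\in\cB(\hI^2)$; the whole point is that $\Delta\notin\Ba(\hI^2)=\cB(\hI)\times\cB(\hI)$. Here I would invoke the standard measure-theoretic fact that for any measurable space $(S,\cS)$ every member of the product $\sigma$-field $\cS\times\cS$ already lies in $\cS_0\times\cS_0$ for some countably generated sub-$\sigma$-field $\cS_0\subseteq\cS$, and that if in addition the diagonal $\set{(s,s):s\in S}$ lies in $\cS\times\cS$, then a countable family generating such an $\cS_0$ must separate the points of $S$. (The separation claim follows by noting that two points $x\neq y$ lying in exactly the same generating sets would make $(x,y)$ and $(y,y)$ lie in exactly the same sets of $\cS_0\times\cS_0$, contradicting that $(y,y)\in\Delta$ while $(x,y)\notin\Delta$.) Applied with $S=\hI$ and $\cS=\cB(\hI)$, this reduces the problem to showing that \emph{no} countable family of Borel subsets of $\hI$ separates the points of $\hI$.

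The hard part, and the real content of the proposition, is this non-separation statement, and it is precisely here that the description of $\cB(\hI)$ from \refP{P1} is used. Given any countable family $\set{B_n}\subseteq\cB(\hI)$, write $B_n=\rho\qw(A_n)\diff N_n$ with $A_n\in\cB(I)$ and $N_n$ countable, and set $N:=\bigcup_n N_n$, still countable. Then the set of $t\in\ooi$ with $t\in N$ or $t-\in N$ is countable, so for all but countably many $t\in\ooi$ one has $t\notin N$ and $t-\notin N$; for such $t$ and every $n$, since $\rho(t)=\rho(t-)=t$ and $t,t-\notin N_n$, we get $t\in B_n\iff t\in A_n\iff t-\in B_n$. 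Hence the pair $\set{t,t-}$ is not separated by $\set{B_n}$. Since such $t$ exist (indeed all but countably many do), the family $\set{B_n}$ fails to separate the points of $\hI$, which is the desired contradiction and completes the proof that $\Ba(\hI^2)\subsetneq\cB(\hI^2)$.
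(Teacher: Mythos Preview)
Your proof is correct, but it takes a genuinely different route from the paper for the strict inclusion. The paper constructs an explicit open set that is not Baire: it defines $h:I\to\hI^2$ by $h(t)=(t,1-t)$, checks via \refL{L1} that $h$ is $(\cB(I),\Ba(\hI^2))$-measurable, and then for any $A\subseteq I$ forms the open set $E_A:=\bigcup_{t\in A}\bigl([t,1]\times[1-t,1]\bigr)\subseteq\hI^2$, observing that $h\qw(E_A)=A$; choosing $A$ non-Borel then forces $E_A\notin\Ba(\hI^2)$. Your argument instead uses the diagonal $\Delta$ and the classical principle that a diagonal in a product $\gs$-field forces a countable separating family, combined with \refP{P1} to rule out such a family in $\hI$. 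Both arguments lean on \refP{P1} in an essential way, but yours is more abstract and reusable (it is the standard template for showing $\Ba(K^2)\neq\cB(K^2)$), while the paper's is more hands-on and yields a whole family $\{E_A\}$ of open non-Baire sets indexed by non-Borel subsets of $I$, which makes the failure of \refC{CP1} in $\hI^2$ particularly explicit.
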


\begin{proof}
The first equality follows by \refP{P1}, and the second by
\refL{Lbaire}\ref{baire-times}. The final inclusion is trivial, and it
remains to show that it is strict.

  Let $h:I\to\hI^2$ be given by $h(t)=(t,1-t)$.
Thus, if we write $h=(h_1,h_2)$, then $h_1$ is the inclusion $\iota$ in
\refSS{SSnotation}, and $h_2(t)=\iota(1-t)$. By \refL{L1}, $\iota$ is
measurable $(I,\cB)=(I,\Ba)\to(\hI,\Ba)$, and thus both $h_1$ and $h_2$ are
measurable $(I,\cB)\to(\hI,\Ba)$;
Hence, if $E\subset\Ba(\hI^2)=\Ba(\hI)\times\Ba(\hI)$, then $h\qw(E)$ is a
Borel set in $I$.

On the other hand, for any $t\in\oi$, $[t,1]$ and $[1-t,1]$ are open
intervals in $\hI$.
Now let $A\subseteq I$ be arbitrary, and define a subset of $\hI^2$ by
\begin{equation}
  E_A:=\bigcup_{t\in A} \bigpar{[t,1]\times[1-t,1]}.
\end{equation}
This is an open subset of $\hI^2$, and thus a Borel set, \ie,
$E_A\in\cB(\hI^2)$. 

However, $h\qw(E_A)=A$. Hence, if we take a set $A$ that is not a Borel set,
then $h\qw(E_A)$ is not a Borel set, and thus, by the first part of the
proof, $E_A\notin\Ba(\hI^2)$.
\end{proof}

It follows easily from \refP{P2} that $\Ba(\hI^m)\subsetneq\cB(\hI^m)$ for
every $m>2$ too; we omit the details.

Note also that 
\refP{P2} implies that there exists
a closed set in $\hI^2$ that is not $G_\gd$;
cf.\ \refC{CP1}.

\begin{remark}
  \refP{P2} gives one proof that $\hI$ is not metrizable, since if
it were, then $\hI^2$ would be as well, which would contradict
\refL{Lbaire}\ref{baire=}. 
(Another proof uses that the proof above shows that the set
$\set{(t,1-t):t\in I}$ is an uncountable discrete subset of $\hI^2$; this is
impossible for a compact metric space.)
\end{remark}

\section{Integration in Banach spaces}\label{Aintegrals}

Let $f$ be a function defined on a measure space $(S,\cS,\mu)$ with values
in a Banach space $B$. Then there are (at least) three different ways to
define the integral $\int_Sf\dd\mu$; 
the three definitions apply  to different classes
of functions $f$, but when two or all three definitions apply to a function
$f$, then the integrals coincide. 
We use all three integrals in \refS{Smom} in the
case when $(S,\cS,\mu)$ is a probability space and the integrals can be seen
as expectations.

We give here a brief summary, and refer to \cite{SJ271}
and the reference given there for further details.

\subsection{Bochner integral}

The Bochner integral is a straight-forward generalization of the Lebesgue
integral to Banach-space valued functions.

\begin{theorem}
  A function $f$ is Bochner integrable if and only if $f$ is \Bormeas, \assep,
  and $\int_S\norm{f}\dd\mu<\infty$.
\nopf
\end{theorem}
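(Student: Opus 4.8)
The plan is to argue both implications directly from the definition of the Bochner integral, in which a $B$-valued \emph{simple function} is a finite sum $\sum_i x_i\etta_{A_i}$ with $x_i\in B$ and $\mu(A_i)<\infty$, and $f$ is \emph{Bochner integrable} precisely when there exist simple functions $g_n$ with $\int_S\norm{g_n-f}\dd\mu\to0$ as \ntoo.

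For the forward implication, suppose such $g_n$ exist. Each $g_n$ has finite range and is \Bormeas, so the union of all these ranges is countable and $f$ takes values, a.e., in the separable closed subspace it spans; moreover, passing to a subsequence along which convergence in $L^1$ improves to $\norm{g_n-f}\to0$ \aex{} exhibits $f$ as an \aex{} limit of \Bormeas{} functions, so $f$ is itself \Bormeas{} and \assep. Finiteness of $\int_S\norm f\dd\mu$ then follows from $\norm f\le\norm{g_n}+\norm{g_n-f}$ for one fixed large $n$.

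For the converse, the substance of the proof is a single simple-function construction. Since $f$ is \assep{} I may assume $f(S)$ lies in a separable subspace $B_1$, and I fix a dense sequence $\set{x_k}_{k\ge0}$ in $B_1$ with $x_0=0$. As $f$ is \Bormeas{} and the norm is continuous, every $s\mapsto\norm{f(s)-x_k}$ is measurable, so I may define $g_n(s):=x_{k(s)}$, a nearest point to $f(s)$ among $\set{x_0,\dots,x_n}$ with ties broken by least index; this $g_n$ is measurable with finitely many values. Putting $0$ into the dense set is the key device: it forces $\norm{f(s)-g_n(s)}\le\norm{f(s)}$, hence $\norm{g_n(s)}\le2\norm{f(s)}$, and therefore on each set $\set{g_n=x_k}$ with $x_k\neq0$ one has $\norm{f(s)}\ge\norm{x_k}/2>0$, a set of finite measure because $\int_S\norm f\dd\mu<\infty$; thus each $g_n$ is genuinely simple.

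It remains to check the limit. Density of $\set{x_k}$ gives $\norm{f(s)-g_n(s)}=\min_{0\le k\le n}\norm{f(s)-x_k}\downarrow0$ for every $s$, while $\norm{g_n-f}\le3\norm f$ supplies an integrable dominating function; the dominated convergence theorem then yields $\int_S\norm{g_n-f}\dd\mu\to0$, so $f$ is Bochner integrable. The main obstacle is precisely this construction, where pointwise convergence, an integrable domination, and finite-measure support of the nonzero values must all be secured simultaneously — and these three requirements are arranged at once by the inclusion of $0$ in the dense sequence.
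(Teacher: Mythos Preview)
The paper does not actually prove this statement: it is recorded in the appendix as a standard background fact, marked with \verb|\nopf| (a bare $\qed$), with the reader referred to \cite{SJ271} and the references there. So there is no proof in the paper to compare against.

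Your argument is the standard one and is correct. A couple of small remarks. In the forward direction, an \aex{} limit of \Bormeas{} functions is only guaranteed to be \aex{} equal to a \Bormeas{} function; strict Borel measurability can fail on a null set unless the measure is complete. This is the usual convention (and the one the paper adopts elsewhere, see the definition of ``$\mu$-measurable'' in \refS{SFubini}), so it is harmless, but worth saying explicitly. In the converse, your bound $\norm{g_n-f}\le3\norm f$ is correct but unnecessarily generous: since $0$ is one of the candidate points you already have $\norm{g_n(s)-f(s)}\le\norm{f(s)-0}=\norm{f(s)}$ directly, which is the sharp domination. Neither point affects the validity of the proof.
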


The Bochner integral $\int_S f\dd\mu$ then is an element of $B$.

Unfortunately, as discussed in \refS{Smom}, the condition of \assep{} makes
the Bochner integral unapplicable in many interesting examples of
$\doim$-valued random variables.

\subsection{Dunford integral}
The Dunford integral is the most general of our integrals.

\begin{definition}\label{Ddunford}
  A function $f:S\to B$ is Dunford integrable if 
$x\mapsto\innprod{\chi,f(x)}$ is
  integrable 
(and in particular measurable)
on $S$ for every continuous linear functional $\chi\in B^*$.
In this case, as a consequence of the closed graph theorem,
there exists a (unique) element $\int_S f\dd\mu\in B^{**}$ 
(the \emph{Dunford integral})
such that
\begin{equation}
\int_S \innprod{\chi,f(x)}\dd\mu=\innprod{\int_S f\dd\mu,\chi}, 
\qquad \chi\in B^*.  
\end{equation}
\end{definition}
Note that the Dunford element is defined as an element of the bidual
$B^{**}$; in general, it is not an element of $B$.
(See \refE{Edunford1}.)

\subsection{Pettis integral}

A  Pettis integral is a Dunford integral that has its value in $B$; 
furthermore, the following is required (in order to have useful properties).
Note that if $f$ is Dunford integrable over $S$, then $f$ is always Dunford
integrable over every measurable subset $E\subseteq S$.

\begin{definition}\label{Dpettis}
  A function $f:S\to B$ is Pettis integrable if $f$ is Dunford integrable
  with $\int_S f\dd\mu\in B$, and, moreover,
$\int_E f\dd\mu\in B$ for every measurable subset $E\subseteq S$.
\end{definition}

By definition, a Pettis integrable function is also Dunford integrable, and
the two integrals coincide.
Similarly, it is easy to see that a Bochner integrable function is Pettis
integrable  (and thus also Dunford integrable) and that the integrals coincide.
The converses do not hold, in general.


\newcommand\AAP{\emph{Adv. Appl. Probab.} }
\newcommand\JAP{\emph{J. Appl. Probab.} }
\newcommand\JAMS{\emph{J. \AMS} }
\newcommand\MAMS{\emph{Memoirs \AMS} }
\newcommand\PAMS{\emph{Proc. \AMS} }
\newcommand\TAMS{\emph{Trans. \AMS} }
\newcommand\AnnMS{\emph{Ann. Math. Statist.} }
\newcommand\AnnPr{\emph{Ann. Probab.} }
\newcommand\CPC{\emph{Combin. Probab. Comput.} }
\newcommand\JMAA{\emph{J. Math. Anal. Appl.} }
\newcommand\RSA{\emph{Random Struct. Alg.} }
\newcommand\ZW{\emph{Z. Wahrsch. Verw. Gebiete} }
\newcommand\DMTCS{\jour{Discr. Math. Theor. Comput. Sci.} }

\newcommand\AMS{Amer. Math. Soc.}
\newcommand\Springer{Springer-Verlag}
\newcommand\Wiley{Wiley}

\newcommand\vol{\textbf}
\newcommand\jour{\emph}
\newcommand\book{\emph}
\newcommand\inbook{\emph}
\def\no#1#2,{\unskip#2, no. #1,} 
\newcommand\toappear{\unskip, to appear}

\newcommand\arxiv[1]{\texttt{arXiv:#1}}
\newcommand\arXiv{\arxiv}

\def\nobibitem#1\par{}

\enlargethispage{\baselineskip}

\end{document}